\newenvironment{algorithm}[1]
  {\innercustomthm}
  {\endinnercustomthm}
\definecolor{darkred}{rgb}{0.9,0.1,0.1}
\def\K{\mathbf{K}}
\def\P{\mathbf{P}}
\def\Q{\mathbf{Q}}
\def\E{\mathbf{E}}
\def\TV{\mathrm{TV}}
\def\Lip{\mathrm{Lip}}
\def\l{\mathfrak{l}}
\def\s{\mathfrak{s}}
\def\e{\mathfrak{e}}
\def\v{\chi}
\def\cM{\mathcscr{M}}
\def\FF{\mathcscr{F}}
\def\GG{\mathcscr{G}}
\def\MM{\mathcscr{M}}
\def\QQ{\mathcscr{Q}}
\def\J{\mathcscr{J}}
\def\Dpe{\Delta^\eps}
\def\tJ{\,\,\,\tilde{\!\!\!\J}}
\def\QQt{\,\,\,\tilde{\!\!\!\QQ}^\eps}
\def\QQtd{\,\,\,\raisebox{0em}[0em]{$\tilde{\!\!\!\QQ}$}^{\eps}_\gamma}
\def\tJe{\,\,\tilde{\!\!\J^\eps}}
\def\eqlaw{\stackrel{\mbox{\tiny law}}{=}}
\begin{document} 

  \title{The Brownian fan}
  \author{Martin~Hairer\inst{1} and Jonathan~Weare\inst{2}}
  \institute{ Mathematics Department, the University of Warwick  \and Statistics Department and the James Frank Institute, the University of Chicago
   \\ \email{M.Hairer@Warwick.ac.uk}, \email{weare@uchicago.edu}}
  \titleindent=0.65cm

  \maketitle
  \thispagestyle{empty}

\begin{abstract}
We provide a mathematical study of the modified Diffusion Monte Carlo (DMC)
algorithm introduced in the companion article \cite{DMC}.  DMC is a simulation technique that uses branching particle systems to represent expectations associated with Feynman-Kac formulae.  
We provide a detailed heuristic explanation of why, in cases in which a stochastic integral appears in the Feynman-Kac formula (e.g. in rare event simulation, continuous time filtering, and other settings), 
the new algorithm is expected to converge in a suitable sense to a limiting process as 
the time interval between branching steps goes to $0$.  
The situation studied here stands in stark contrast to the ``na\"\i ve'' generalisation of the DMC
algorithm which would lead to an exponential explosion of the number of particles, thus precluding the
existence of any finite limiting object.
Convergence is shown rigorously in the simplest possible situation of a random walk, biased by a linear potential.
The resulting limiting object, which we call the ``Brownian fan'', is a very natural new mathematical object
of independent interest.

\end{abstract}

\keywords{Diffusion Monte Carlo, quantum Monte Carlo, rare event simulation, sequential Monte Carlo, particle filtering, Brownian fan, branching process}

\setcounter{tocdepth}{2}
\tableofcontents

\section{Introduction}

Consider a Markov chain $y_{t_k}$ with transition probabilities $\P(x,dy)$ on some state
space $\CX$ and, in anticipation of developments below, indexed by a sequence of real numbers $t_0<t_1<t_2<\cdots.$ The nature of the state space does not matter (Polish is enough), but one should think
of $\CX = \R^d$ for definiteness. Given functions $\v \colon \CX^2 \to \R$ and $f\colon \CX \to \R$ 
with sufficient integrability properties (say bounded), 
the Diffusion Monte Carlo (DMC) algorithm (see \cite{DMC} for description of DMC compatible with the discussion below) computes an estimate of expectations of the form
\begin{equation}\label{discavet2}
\langle f \rangle_t =  \mathbf{E}\biggl( f(y_t) \exp\Bigl(- \sum_{t_k\leq t}\v(y_{t_k},y_{t_{k+1}}) \Bigr)\biggr)\;,
\end{equation}
where $y$ a realisation of the Markov chain described by $\P.$  Strictly for convenience we have assume that $t$ is among the times $t_0,t_1,t_2,\dots.$
More precisely, at time $t,$ DMC produces a collection of $N_{t}$ copies of the underlying system, $\{x_{t}^{(i)}\}_{i=1}^{N_{t}}$ so that 
\begin{equ}[e:unbiased]
\mathbf{E} \sum_{i=1}^{N_{t}} f(x_{t}^{(i)}) = \langle f \rangle_t.
\end{equ}
Of course, the expectation in \eqref{discavet2} can be computed by generating many independent sample trajectories of $y.$  However, in most cases the weights $\exp\bigl(- \sum_{t_k\leq t}v(y_{t_k},y_{t_{k+1}}) \bigr)$ will quickly degenerate so that a huge number of samples is required to generate a single statistically significant sample.  At each time step DMC removes samples with very low weights and replaces them with multiple copies of samples with larger weights, focusing computational effort on statistically significant samples.

Variants of DMC are used regularly in a wide range of fields including electronic structure, rare event simulation, and data assimilation (see \cite{HammersleySIS:1954, defreitas05,Allen:FFS:2006, Johansen:SMCrare:2006, DMC} for a small sample of these applications).  The algorithm has also been the subject of significant mathematical inquiry \cite{DelMoral:FK:2011}.  Continuous time limits have been considered \cite{Rousset:PhD:2006} for cases in which $\v$ scales like the time interval between branching steps. Perhaps because standard DMC does not have a limit in those cases, more general Feynman-Kac formulae do not seem to have been considered despite their appearance in applications.

For the purposes of this article, the main example one should keep in mind is that where $y$ is a time discretisation
of a diffusion process obtained for example by applying an Euler scheme with fixed stepsize $\eps$:
\begin{equ}[e:rareEvent]
y_{t_{k+1}} =y_{t_k} + \eps F(y_{t_k}) + \sqrt \eps \Sigma(y_{t_k}) \,\xi_{k+1}\;,
\end{equ}
where $\eps = t_{k+1}-t_k$ and the $\xi_k$ are a sequence of i.i.d.\ random variables (not necessarily Gaussian).
Regarding the function $\v$, we will mostly consider the case $\v(y,\tilde y) = V(\tilde y) - V(y)$.  As described in \cite{DMC}, this choice arrises in application of DMC to rare event simulation problems.  Notice that, for this choice of $\v$ and for $y$ in \eqref{e:rareEvent}, when $\eps$ is small, $\v(y_{t_k},y_{t_{k+1}})\sim \sqrt {\eps}.$  This causes a dramatic failure in standard DMC \cite{DMC}.  Yet, for small $\eps$ (with fixed $t$) the expectation $\langle f\rangle_t$ defined in \eqref{discavet2} has the perfectly nice limit
\[
 \mathbf{E}\biggl( f(y_t) \exp\Bigl(- V(y_t) + V(y_0) \Bigr)\biggr)
 \]
 where now $y_t$ is the continuous time limit of \eqref{e:rareEvent} (the diffusion with drift $F$ and diffusion $\Sigma$).  When a stochastic integral appears in the exponential the situation is completely analogous.  It is natural then to search for a modification of DMC that can handle these settings.

%

It was shown in \cite{DMC} that the following algorithm provides an unbiased estimator for $\langle f \rangle_t$ (it satisfies \eqref{e:unbiased}) and is superior to DMC (it has lower variance and  equivalent expected cost).
\begin{algorithm}{TDMC}\label{tdmc}
Ticketed DMC
{\tt
\begin{enumerate}
\item Begin with $M$ copies $x^{(j)}_0 = x_0$.  For each $j=1,\dots,M$ choose an independent random variable
$\theta^{(j)}_0\sim \mathcal{U}(0,1)$.
\item At step $k$  there are $N_{t_k}$ samples $(x^{(j)}_{t_k},\theta^{(j)}_{t_k})$.  Evolve each of the $x^{(j)}_{t_k}$ one step to generate
$N_{t_k}$ values  \[\tilde x^{(j)}_{t_k}\sim \mathbf{P}\bigl( y_{t_{k+1}}\in dx\,|\, y_{t_k}=x^{(j)}_{t_k}\bigr).\]
\item For each $j=1,\dots, N_{t_k}$, let
\begin{equ}[e:defP]
P^{(j)} = e^{-\v(x^{(j)}_{t_k},\tilde x^{(j)}_{t_{k+1}})}.
\end{equ}
If $P^{(j)} < \theta^{(j)}_{t_k}$ then
set 
\[
N^{(j)} = 0.
\]
If $P^{(j)} \geq \theta^{(j)}_{t_k}$ then set
\begin{equ}[e:noff]
N^{(j)} = \max\{\lfloor P^{(j)} + u^{(j)} \rfloor , 1\}\;,
\end{equ}
where $u^{(j)}$ are independent $\mathcal{U}(0,1)$ random variables.
\item For  $j=1,\dots, N_{t_k}$, if $N^{(j)}>0$ set 
\[
x^{(j,1)}_{t_{k+1}} = \tilde x^{(j)}_{t_{k+1}}\quad\text{and}\quad \theta^{(j,1)}_{t_{k+1}} = \frac{\theta^{(j)}_{t_k}}{P^{(j)}}
\]
and for $i=2,\dots,N^{(j)}$ 
\[
x^{(j,i)}_{t_{k+1}} = \tilde x^{(j)}_{t_{k+1}}\quad\text{and}\quad  \theta^{(j,i)}_{t_{k+1}}\sim  \mathcal{U}((P^{(j)})^{-1},1).  
\]
\item  Finally set $N_{t_{k+1}} = \sum_{j=1}^{N_{t_k}} N^{(j)}$ and list the $N_{t_{k+1}}$ vectors 
$\bigl\{x^{(j,i)}_{t_{k+1}}\bigr\}$  as $\bigl\{x^{(j)}_{t_{k+1}}\bigr\}_{j=1}^{N_{t_{k+1}}}$.
\item
At time $t$ produce the estimate
\[
 \widehat  f _t  = \frac{1}{M}\sum_{j=1}^{N_t} f(x^{(j)}_t)\;.
 \]
\end{enumerate}
}\end{algorithm}

The aim of this article is to argue that one expects the application of 
Algorithm~\ref{tdmc} to \eref{e:rareEvent} to converge to a finite continuous-time limiting
particle process as $\eps \to 0$. Section~\ref{sec:heuristic} provides very detailed heuristic
as to why we expect this to be the case and gives a precise mathematical definition for the 
expected limiting object. 

In the special case where $d = 1$, $F = 0$, $\Sigma = 1$, and $V(x) = x$, the process in question 
 is built recursively by successive realisations of a Poisson point process in a space of excursions of $y_t$.  
A precise definition is given in  Definition~\ref{def:fan} and we call this object the \emph{Brownian fan}.
It is of particular interest since, similarly to the fact that every diffusion process looks locally like
a Brownian motion, one would expect the general limiting objects described in  Section~\ref{sec:heuristic} to
locally ``look like'' the Brownian fan.
The Brownian fan is interesting in its own right from a 
mathematical perspective and does not seem to have been studied before,
though it is very closely related to the ``Virgin island model'' introduced in \cite{VirginIsland}.

Loosely speaking, the Brownian fan is a branching process obtained in the following way. 
Start with a (usually finite) number of initial particles on $\R$ that furthermore come each with a 
tag $v\in \R$, such that the position $x$ satisfies $x > v$.
Each of these particles, which we call the ancestor particles, perform independent Brownian motions
until they hit the barrier $x=v$, where they get killed. Each of these particles independently produces 
offspring according to the following mechanism. Denoting by $\CE$ the space of excursions on $\R$,
let $\MM$ be the Poisson point process on $\R \times \CE$ with intensity measure
$a\,dt \otimes \Q$ for some $a>0$, where $\Q$ is It\^o's Brownian excursion measure \cite{ItoExc}. If $(t,w)$ is one
of the points of $\MM$ and the corresponding ancestor particle is alive at time $t$ and located at $x_t$, 
then it gives rise to an offspring which performs the motion described by $w$, translated so that
the origin of the excursion lies at $(t,x_t)$. This mechanism is then repeated recursively for each new particle
created in this way.

The Brownian fan has a number of very nice properties.  For example, as established by Theorem~\ref{theo:Nt}, the continuous-time analogue of the number of particles at time $t$, $N_t$, corresponding to the Brownian fan satisfies
\begin{equation}\label{expbndonN}
\sup_{t>0} \mathbf{E} \exp \left(\lambda_t N_t\right)< \infty
\end{equation}
 for some continuous decreasing function $\lambda_t>0$.  
As established in Proposition~\ref{prop:bfWmodcont}, the bound in \eqref{expbndonN} implies that the continuous-time analogue of the workload,
\[
\mathcal{W}_t = \int_0^t N_s ds
\]
is very nearly a differentiable function of time, 
\[
\sup_{t\leq T} \lim_{h\rightarrow 0} \frac{ \left| \mathcal{W}_{t+h} - \mathcal{W}_t\right|}{h \left| \log h\right|} < \infty.
\]
This is close to optimal, since there turns out to be a dense 
set of exceptional times at which there are infinitely many particles alive (see the argument
given at the end of Section~\ref{sec:numPart} below).

To conclude this round-up of its mathematical properties, we establish  in Proposition~\ref{prop:feller} 
that the Brownian fan is a Feller process in a suitable 
state space. When combined with the continuity of its sample paths established in 
Proposition~\ref{prop:Kolmogorov}, this implies that the Brownian fan is a strong Markov process.
Note that the construction of such a state space is a non-trivial endeavour due to the fact that, 
while the number $N_t$ of particles alive at time $t$ has finite moments of all orders, there exists
a \textit{dense} set of exceptional times for which $N_t = \infty$! Offspring are continuously being created at infinite rate, thus making the Brownian fan quite
different from a standard branching diffusion.

The ``meat'' of this article, Sections~\ref{sec:tightness} and \ref{sec:convFan}, is then
devoted to a rigorous proof of the convergence
of the output of Algorithm~\ref{tdmc} to the Brownian fan for any sufficiently light-tailed
one-step distribution for the random variables $\xi_k$. This result is formulated in Theorem~\ref{theo:finalConv}.
The usual method by which one attempts to characterise the continuous-time limit of a sequence of discrete-time 
processes involves studying the limit of the generators of the discrete-time processes.  In our case, 
as described in Section \ref{sec:noConv}, the discrete-time generators  do \textit{not} converge to the correct limit, at
least when applied to a class of very natural-looking test functions.
Surprisingly (and rather confusingly), they do actually converge
to the generator  of a Brownian fan, but unfortunately with the wrong parameters!  
 En route to our convergence result we 
establish a number of important and extremely encouraging results about the behaviour of the process generated 
by Algorithm~\ref{tdmc} for finite $\eps$.  For example, in Proposition~\ref{prop:numPart} we obtain that 
\[
\sup_{\eps < \eps_0} \mathbf{E} \left| N_{t}\right|^p < \infty\;,
\]
for any $p\geq 0$ and $\eps_0>0$ sufficiently small where here $N_t$ refers to the number of particles generated in Algorithm~\ref{tdmc} and not to the Brownian fan (for which we have the even stronger result in \eqref{expbndonN}).  In Corollary~\ref{cor:Kolmogorov} we also prove a uniform (in $\eps$) form of continuity of the processes.

%
%

\subsection{Notations}

For any Polish space $\CY$, we will denote by $\MM_+(\CY)$ the space of finite positive measures 
on $\CY$, endowed with the topology of weak convergence, together with the convergence of total mass.
Given a random variable $X$, we denote its law by $\CD(X)$, except in some cases where we 
introduce dedicated notations.

We will often use the notation $a \lesssim b$ as a shorthand for the inequality $a \le C b$ for some constant $C$.
The dependence of $C$ on other quantities will usually be clear from the context, and will be indicated when 
ambiguities may arise. We will also use the standard notations $a \wedge b = \min \{a,b\},$ $a \vee b = \max\{a,b\},$ and $\lfloor a\rfloor = \max\{ i\in \mathbb{Z}:\,i\leq a\}$.

\subsection*{Acknowledgements}

{\small
We would like to thank H.~Weber for numerous discussions on this article and S.~Vollmer for
pointing out a number of inaccuracies found in earlier drafts.  We would also like to thank J.~Goodman and E.~Vanden-Eijnden for helpful conversations and advice during the early stages of this work.
Financial support for MH was kindly provided by EPSRC grant EP/D071593/1, by the 
Royal Society through a Wolfson Research Merit Award and by the Leverhulme Trust through a Philip Leverhulme Prize.  JW was supported by NSF through award DMS-1109731.
}

\section{The continuous-time limit and the Brownian fan}
\label{sec:contLimit}

From now on, we restrict ourselves to the analysis of the case $\v(x,y) = V(y) - V(x)$ for
some ``potential'' $V$ defined on the state space of the underlying Markov process.
We argue that if the underlying process is obtained by approximating a diffusion process then, 
unlike in the case of the na\"\i ve generalisation of DMC (see \cite[Algorithm~DMC]{DMC}), 
our modification, Algorithm~\ref{tdmc}, converges to a 
non-trivial limiting process as the stepsize $\eps$ converges to $0$.

We will first provide a heuristic argument showing what kind of limiting process one would
expect to obtain. The remainder of this article will then be devoted to rigorously constructing
the limiting process and proving convergence in the simple case in which the underlying Markov chain
is a random walk (rescaled so that it converges to a standard Brownian motion) and the biasing
potential $V$ is linear. In this case the limiting process is a very natural object that does not seem
to have been studied in the literature so far. We call this object, which is closely related to the construction in 
\cite{VirginIsland}, the \textit{Brownian fan} (see Section~\ref{sec:hairy}).
It also has a flavour very similar to the construction of the Brownian web \cite{BWeb} and the
Brownian net \cite{MR2408586}, although 
there does not seem to be an obvious transformation linking these objects.

\subsection{Heuristic derivation of the continuous-time limit}
\label{sec:heuristic}

Throughout this section the underlying Markov chain will be given just like in the introduction by 
the following approximation to a diffusion:
\begin{equ}[e:stepEuler]
y_{(k+1)\eps} =y_{k\eps} + \eps F(y_{k\eps}) + \sqrt \eps \Sigma(y_{k\eps}) \,\xi_{k+1}\;,\qquad y_{k\eps} \in \R^n\;,
\end{equ}
where the $\xi_k$ are a sequence of i.i.d.\ (not necessarily Gaussian) random variables with law $\nu$
and the identity on $\R^n$ as their
covariance matrix.
The functions $F$ and $\Sigma$ are sufficiently ``nice'' functions, but since this section is only heuristic, we do
not state specific regularity, growth or non-degeneracy assumptions.

\begin{remark}
We have slightly changed our notations by writing $y_{k\eps}$ instead of $y_k$ for the position of the Markov chain
after $k$ steps. This is in order to make explicit the fact that as $\eps \to 0$, one has convergence to a 
continuous-time process. The corresponding notational changes in Algorithm~\ref{tdmc} are straightforward.
\end{remark}

Concerning the function $\v$, we take $\v(x,y) = V(y) - V(x)$ for some regular potential 
$V \colon \R^n \to \R$. Recall now that, as long as a particle is alive, its ticket $\theta$ evolves under
Algorithm~\ref{tdmc} as 
\begin{equ}
\theta^{(j)}_{k\eps} = \theta^{(j)}_{k\eps} \exp \bigl(V(\tilde x^{(j)}_{(k+1)\eps})-V(x^{(j)}_{k\eps})\bigr)\;.
\end{equ}
It is natural therefore to replace $\theta$ by the quantity $v$ given by
\begin{equ}
\exp\bigl(-v^{(j)}_{t}\bigr) =  \theta^{(j)}_{t}\exp \bigl(-V(x^{(j)}_{t}) \bigr)\;.
\end{equ}
In this way, the new ``tag'' $v$ does not change over time, but is assigned to a particle at the moment
of its birth. Translating Steps~3 and 4 of the algorithm into this slightly different setting, we see that
if a particle performs a step from $x$ to $y$ such that $V(y) < V(x)$,
then it can potentially spawn one or more descendants. The tags $v$ of the descendants are then 
distributed according to
\begin{equ}[e:lawv]
e^{-v} \eqlaw \CU\bigl(e^{-V(x)}, e^{-V(y)}\bigr)\;,
\end{equ}
and a particle with tag $v$ lives as long as it stays within the region $\{x\,:\,  V(x)\le v\}$.

\subsubsection{Description of the limit}
\label{sec:limit}

For very small values of $\eps$, the process described above has the following features.
Taking the limit $\eps \to 0$ in \eref{e:stepEuler}, we observe that each particle follows
a diffusion process, solving the equation
\begin{equ}[e:diff]
dy_t = F(y_t)\,dt + \Sigma(y_t)\,dB_t\;,
\end{equ}
where $B_t$ is a standard $d$-dimensional Brownian motion. If the particle has tag $v$, then this 
process is killed as soon as it exits the sublevel set $\{x\,:\, V(x) \le v\}$.

Consider the following representation of the object produced  by Algorithm \ref{tdmc}.  Denote by $\Q^\eps_{x,v}$
the law of the $\eps$-discretization of \eqref{e:diff} generated by \eqref{e:stepEuler} starting at $x$ and killed  upon exiting the set $\{y\,:\, V(y) \le v\}$.  Let $\tau$ and  $\{w_{k\eps}\}_{k=0}^{\tau/\eps}$ be, respectively, the lifetime and trajectory of the original particle.  The trajectories of the offspring of this initial trajectory are very nearly given (it will be true in the small $\eps$ limit) by a realisation $\mu^{\eps,1}$ of a Poisson point process with intensity
 \begin{equ}[e:discintMeasure]
\CQ^\eps(w,\cdot) =  \sum_{k=0}^{\tau/\eps-1} A^\eps (k)\, \Theta_{k\eps}^\star \int \sqrt{\eps}\Q^\eps_{w_{k\eps},V(w_{k\eps})+\delta}\,\eta(k,d\delta)\;,
\end{equ}
where, according to the rule for generating new offspring in Algorithm \ref{tdmc},
\[
A^\eps (k) = \begin{cases} \frac{1}{\sqrt{\eps}}\bigl(e^{-(V(w_{(k+1)\eps})-V(w_{k\eps}))}-1\bigr) & \text{if } V(w_{(k+1)\eps})<V(w_{k\eps})\\
0 & \text{if }V(w_{(k+1)\eps})\geq V(w_{k\eps})
\end{cases}
\]
and, according to the rule for generating offspring tickets in Algorithm \ref{tdmc},
\[
\int f(\delta)\eta(k,d\delta) =\int_0^1 f\bigl( -\log\bigl( 1 + u(e^{-(V(w_{(k+1)\eps})-V(w_{k\eps}))}-1)\bigr)\bigr)\,du.
\]
Here $\Theta_t$
is the map that shifts trajectories forward by time $t$.
Since each offspring behaves independently just like the original particle, this suggests
that the $n$th generation $\mu^{\eps,n}$ of offspring is obtained recursively 
as a realisation of the Poisson point process  with intensity
given by
\begin{equ}
\CG^{\eps,n}(\cdot) = \int \CQ^\eps(\tilde w,\cdot)\,\mu^{\eps,{n-1}}(d\tilde w)\;.
\end{equ}

At each ``microscopic'' step, the probability of creating a descendent is of order $\sqrt \eps$ so that,
in the limit $\eps \to 0$, each particle spawns  descendants at infinite rate. However, 
any such descendant is created at distance $\CO(\sqrt \eps)$ of the ``barrier'' $V(x) = v$. 
As a consequence, the probability that it survives for a time of order $1$ before being killed
is itself only of order $\sqrt \eps$. Therefore, the rate at which a particle creates descendants
that actually survive for a time $\tau$ of order $1$ is finite, but tends to infinity as $\tau \to 0$.
 
Now we will consider the small $\eps$-limit of the object we have constructed.  The trajectory $w_t$ becomes  a sample path of \eqref{e:diff} exiting the set $\{y\,:\, V(y) \le v\}$  at time $\tau$.  Denote by $\Q_{x,v}$ the law of the diffusion
\eref{e:diff} starting at $x$ and killed upon exiting the set $\{y\,:\, V(y) \le v\}$, which is a probability measure
on some space of excursions in $\R^n$.
The characterisation of the
standard It\^o excursion measure (see for example \cite[Theorem~4.1]{RevYor} and \cite{PitmanYor}) 
then suggests that, for every $x \in \R^n$ such that
$\nabla V \neq 0$ and $\Sigma$ is non-degenerate, the limit
\begin{equ}
\Q_x = \lim_{\delta\to 0^+} {1\over \delta} \Q_{x,V(x)+\delta}\;,
\end{equ}
exists as a $\sigma$-finite measure in the sense that  $ {1\over \delta} \Q_{x,V(x)+\delta}$  restricted to
the set of 
excursions longer than a fixed length 
converges weakly to $\Q_x$ restricted to the same set.  

The  discussion so far suggests that for the limiting object, the trajectories of the first generation of
offspring are given
by a realisation $\mu^1$ of the Poisson point process with intensity measure
\begin{equ}[e:intMeasure]
\CQ(w,\cdot) = \int_0^\tau A(w_t)\, \Theta_t^\star \Q_{w_t}\,dt\;,
\end{equ}
for some intensity $A\colon \R^n \to \R_+$ yet to be determined and
that the $n$th generation $\mu^n$ of offspring is obtained recursively 
as a realisation of a Poisson point process with intensity
given by
\begin{equ}
\CG^n(\cdot) = \int \CQ(\tilde w,\cdot)\,\mu^{n-1}(d\tilde w)\;,
\end{equ}
with $\CQ$ as in \eref{e:intMeasure}. 
In order to fully characterise the limiting object, it remains to provide an expression
for the intensity function $A$.

Let us start by replacing $\Q^\eps_{x,v}$ in equation \eqref{e:discintMeasure} by $\Q_{x,v},$ i.e. by assuming that for small $\eps$, excursions of the discrete process are very similar to excursions of its continuous time limit.
We then apply the  relations
\begin{equation}\label{Papprox1}
\Q_{x,V(x)+\delta} \approx \delta \Q_x\;
\end{equation}
and
\[
V(w_{(k+1)\eps}) - V(w_{k\eps}) \approx \sqrt{\eps}\scal{\nabla V(w_{k\eps}),\Sigma(w_{k\eps}) \xi_{k+1}}
\]
with the $\xi_{k+1}$ as in \eref{e:stepEuler}.
We then formally obtain
\begin{multline*}
\CQ^\eps(w,\cdot) \approx \eps \sum_{k=0}^{\tau/\eps-1} \mathbf{1}_{\scal{\nabla V(x),\Sigma(x) \xi_{k+1}} < 0}\scal{\nabla V(w_{k\eps}),\Sigma(w_{k\eps}) \xi_{k+1}}\\
\times
\int_0^1 u  \scal{\nabla V(w_{k\eps}),\Sigma(w_{k\eps}) \xi_{k+1}} du\,
 \Theta_{k\eps}^\star \Q_{w_{k\eps}}.
\end{multline*}
Our arguments so far therefore suggest that 
\begin{equ}[e:wrongGuess]
A(x) = \frac{1}{2} \int_{\scal{\nabla V(x),\Sigma(x) z} < 0}  \langle \nabla V(x), \Sigma(x) z \rangle^2\, \nu(dz)\;,
\end{equ}
where the distribution $\nu$ has mean 0 and identity covariance matrix.   Assuming that $\nu$ is symmetric this becomes
\begin{equ}[e:intensity]
A(x) = c\scal{\nabla V(x),\Sigma(x) \Sigma^T(x)\nabla V(x)}\;,
\end{equ}
where $c= {1\over 4}$. If $\nu$ is not symmetric, one might even expect a prefactor $c$ that depends on $x$.

In fact, as we will see in a specific case in the remainder of this section, the correct value is $c = {1\over 2}$, whether
the law of $\xi$ is symmetric or not. The reason for this discrepancy is that the relation
\[
\Q^\eps_{x,V(x)+\delta} \approx \delta \Q_x\;
\]
used in our derivation
 is only valid if
$\delta \gg \sqrt \eps$. In our case however, one precisely has $\delta \sim \sqrt \eps$, which 
introduces a correction factor that eventually gives rise to the value $c = {1\over 2}$. 
The aim of the next subsection is to show in more detail how this factor ${1\over 2}$ arises in the
simplest situation where $F=0$ and $\Sigma = 1$.

\subsubsection{The case of Brownian motion}
\label{sec:BMformal}

We now consider the one-dimensional case, where the limiting underlying process is simple Brownian motion.
Regarding the underlying discrete problem, we consider the Markov chain defined recursively by 
\begin{equ}[e:discrete]
y_{(k+1)\eps} = y_{k\eps} + \sqrt \eps \xi_{k+1}\;,
\end{equ}
for an i.i.d.\ sequence of centred random variables $\xi_k$ with law $\nu$ and variance $1$. For the potential
function $V$, we choose $V(x) = -ax$ for some $a > 0$.

In order to show that the constant $c$ appearing in \eref{e:intensity} 
is equal to ${1\over 2}$, we will now argue that
if we denote by $\Q$ the standard It\^o excursion measure (which we normalise in such a way that
 $\Q = \lim_{\eps \to 0} {1\over \eps} \Q_\eps$, where $\Q_\eps$ is the law of a 
standard Brownian motion starting at $\eps$ and killed when it hits the origin)
and by $\Q^\eps_z$ the law of the random 
walk \eref{e:discrete} starting at $\sqrt \eps z$ and stopped as soon as it takes negative values,
then there exists a function $G$ such that
\begin{equ}[e:fcnG]
 \Q^\eps_z \approx \sqrt{\eps}\,G(z) \Q\;,
\end{equ}
as $\eps\to 0$ when both sides are restricted to excursions that survive for at least  some fixed amount of time.
We will see that the function $G$ behaves like $G(z) \approx z$ for large values of $z$, but has a non-trivial behaviour
for values of order $1$.
In terms of our notation from the previous subsection (since $y_t$ is  spatially homogeneous and $V$ is linear) this implies
that the approximation \eref{Papprox1} should really have been replaced by
\begin{equ}
 \Q^\eps_{x,V(x)+\delta} \approx a \sqrt{\eps}\,G\left(\frac{\delta}{a\sqrt{\eps}}\right) \Q\;.
\end{equ}

Since we assumed $a > 0$, our process creates offspring only when it performs a step
towards the right, i.e.\ when $\xi_{k+1} > 0$.  The probability that a new particle is created in the $k$-th step is approximately $a\xi_{k+1}$. Furthermore, the small $\eps$ rule for the generation of tags implied by
Algorithm \ref{tdmc} is
\begin{equ}[e:unifv]
\frac{\delta}{a\sqrt{\eps}} = \frac{a^{-1}v+x}{\sqrt\eps}  \sim \CU \bigl(0, \xi_{k+1} \bigr).
\end{equ}
 As a consequence, once we have identified the function $G$ in
\eref{e:fcnG}, our arguments in the previous section lead to the formula
\[
A(x) = \int_0^\infty (a z) \biggl( \frac{1}{z} \int_0^z a G(y)\,dy\biggr)\,\nu(dz)
\]
where $\nu$ is the law of the steps $\xi_{k}$.
If we can  show that
\begin{equ}[e:idenG]
\int_0^\infty \int_0^z G(y)\,dy\,\nu(dz) = {1\over 2}\;,
\end{equ}
then we will have
\[
A(x) = \frac{a^2}{2}\;,
\]
a formula consistent with a choice of $c=\frac{1}{2}$ in \eqref{e:intensity}.

In order to identify $G$, we note that if a random walk starting from $\sqrt \eps \delta$ 
survives for some time of order $1$ before becoming negative then, with overwhelming probability,
it will have reached a height of at least $\eps^{1/4}$ (say). Furthermore, if we condition the
random walk $\Q^\eps_z$ to reach a level $\sqrt \eps \gamma$ with $1 \ll \gamma \ll \eps^{-1/2}$,
 one would expect its 
law to be well approximated by $\sqrt \eps \gamma \Q$ when restricted to excursions that 
survive for a time of order $1$. 

As a consequence, we expect that
\begin{equ}
\Q^\eps_z \approx \bar P_{z,\gamma} \sqrt \eps \gamma \Q\;,  \qquad \gamma \gg 1\;,
\end{equ}
where $\bar P_{z,\gamma}$ denotes the probability that the simple random walk \eref{e:discrete} with $\eps = 1$
started at $z$ reaches the level $\gamma$ before becoming negative.

The remainder of this section is devoted to the proof of the fact that if we define $\bar P_{z,\gamma}$ in this way,
then under some integrability assumptions for the one-step probability $\nu$, the limit
\begin{equ}
G(z) = \lim_{\gamma \to \infty} \gamma \bar P_{z,\gamma} \;,
\end{equ}
exists and does indeed satisfy \eref{e:idenG}, independently of the choice of $\nu$. Actually, we will prove
these statements for the quantity $P_{z,\gamma} = \bar P_{z, \gamma+z}$, which we interpret as the probability
that the random walk starting at the origin reaches $[\gamma,\infty)$ before reaching $(-\infty,-z]$.
Our first result is as follows:

\begin{proposition}\label{prop:defG}
Assume that the law $\nu$ satisfies $\nu(\{|x| \ge K\}) \le C \exp(- c K^\beta)$ for all $K \ge 0$ and
some strictly positive
constants $c$, $C$ and $\beta$.
Then, the limit 
\begin{equ}[e:defG]
G(s) = \lim_{\gamma \to \infty} \gamma P_{s,\gamma}
\end{equ}
exists and satisfies the relations
\begin{equ}[e:relG]
G(s) = \int_{-s}^\infty G(s+z) \,\nu(dz)\;,\quad s \ge 0\;,\qquad \lim_{s \to \infty} {G(s) \over s} = 1\;.
\end{equ}
Furthermore, for every $\delta > 0$ there exists $C$ such that the bound
\begin{equ}[e:quantitative]
\bigl|(\gamma+s)P_{s,\gamma} - G(s)\bigr| \le C {1+s \over \gamma^{{1\over 2}-\delta}}\;,
\end{equ}
holds uniformly for all $s \ge 0$ and $\gamma \ge 1\vee s$.
\end{proposition}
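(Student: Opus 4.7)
The plan is to extract $G$ from a martingale identity and then quantify the convergence via $L^p$-control of the overshoot at the left barrier. Write $\tau=\tau_{s,\gamma}$ for the first exit time of the walk $(S_n)_{n\ge 0}$, started at $0$, from $(-s,\gamma)$, and let $L_{s,\gamma}=(-s-S_\tau)\,\mathbf{1}_{\{S_\tau\le -s\}}$ and $R_{s,\gamma}=(S_\tau-\gamma)\,\mathbf{1}_{\{S_\tau\ge\gamma\}}$ denote the two overshoots; write $L_s=-s-S_{\tau_s}$ for the overshoot at the first (a.s.\ finite) passage time $\tau_s$ of $(S_n)$ into $(-\infty,-s]$. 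Since the steps have all exponential moments and $\mathbf{E}[\tau]<\infty$, optional stopping applied to the martingale $S_n$ yields the key identity
\[
(s+\gamma)\,P_{s,\gamma} \;=\; s + \mathbf{E}[L_{s,\gamma}] - \mathbf{E}[R_{s,\gamma}]\;,
\]
and the whole proof is an analysis of the two expectations on the right.

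The technical heart, and the main obstacle, is to establish uniform moment bounds
\[
\sup_{s\ge 0}\mathbf{E}[L_s^p]\;<\;\infty\;, \qquad \mathbf{E}[R_{s,\gamma}]\;\le\; C_\delta\,\gamma^\delta\,P_{s,\gamma}\qquad (\delta>0)\;.
\]
The first is essentially Lorden's inequality: $L_s$ is the overshoot past $s$ in a renewal process whose inter-arrival law is the descending ladder-height distribution $H_-$ of $(S_n)$, and a Wiener--Hopf/Chernoff argument transfers the stretched-exponential tails of $\nu$ into $\mathbf{E}[H_-^q]<\infty$ for every $q$, after which Lorden-type estimates deliver the uniform bound. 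For the second, on the right-exit event one has $R_{s,\gamma}\le \xi_\tau$, and splitting $\mathbf{E}[\xi_\tau\mathbf{1}_R]=\sum_k\mathbf{E}[\xi_k\mathbf{1}_{\{\tau=k,R\}}]$ by whether $\xi_k\le A$ or $\xi_k>A$, using the independence of $\xi_k$ from $\{\tau\ge k\}$, and optimising $A\asymp(\log\gamma)^{1/\beta}$ against the tail $\nu(\xi>A)\lesssim e^{-cA^\beta}$ and the Wald bound $\mathbf{E}[\tau]\lesssim(s+\gamma)^2$ yields the stated $\gamma^\delta$-slack bound.

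With these bounds in hand the remaining ingredients are routine. The decomposition $L_{s,\gamma}=L_s\,\mathbf{1}_{\{\tau_s<\tau_\gamma\}}$ shows $L_{s,\gamma}\uparrow L_s$ as $\gamma\to\infty$, so $\mathbf{E}[L_{s,\gamma}]\uparrow \mathbf{E}[L_s]$ by monotone convergence, while $\mathbf{E}[R_{s,\gamma}]\to 0$ by the second bound (since $P_{s,\gamma}\to 0$). Passing to the limit in the identity gives \eqref{e:defG} with $G(s)=s+\mathbf{E}[L_s]$, and uniform boundedness of $\mathbf{E}[L_s]$ immediately yields $G(s)/s\to 1$. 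The functional equation in \eqref{e:relG} follows from the one-step Markov decomposition
\[
P_{s,\gamma}\;=\;\nu([\gamma,\infty))+\int_{-s}^{\gamma}P_{s+z,\gamma-z}\,\nu(dz)\;,
\]
after multiplying by $s+\gamma=(s+z)+(\gamma-z)$ and passing $\gamma\to\infty$ inside the integral by dominated convergence, using the uniform majorant $(s+\gamma)P_{s,\gamma}\le G(s)\le s+C$ together with the finite moments of $\nu$.

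For the quantitative estimate \eqref{e:quantitative}, the identity rewrites as
\[
\bigl|(s+\gamma)P_{s,\gamma}-G(s)\bigr| \;\le\; \mathbf{E}\bigl[L_s\,\mathbf{1}_{\{\tau_s>\tau_\gamma\}}\bigr] + \mathbf{E}[R_{s,\gamma}]\;.
\]
Cauchy--Schwarz with the $L^2$-bound on $L_s$ controls the first term by $C\sqrt{P_{s,\gamma}}$, and the second step bounds the second term by $C\gamma^\delta P_{s,\gamma}$. Inserting the crude estimate $P_{s,\gamma}\le G(s)/(s+\gamma)\le C(1+s)/\gamma$, which is itself a direct consequence of the martingale identity, one obtains an overall error of order $(1+s)^{1/2}/\gamma^{1/2}+(1+s)/\gamma^{1-\delta}$, comfortably inside $C(1+s)/\gamma^{1/2-\delta}$ in the range $\gamma\ge 1\vee s$. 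Thus the only genuinely delicate input is the uniform-in-$s$ moment control on the left overshoot, and this is exactly where the stretched-exponential tail hypothesis on $\nu$ is essential.
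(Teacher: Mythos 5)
Your proof is correct in outline but follows a genuinely different route from the paper. The paper never introduces the martingale/overshoot decomposition: it instead couples the walk with a Brownian motion via the strong approximation result of \cite{ConvRateBM} to get $\bigl|P_{a\gamma,\gamma}-\tfrac{a}{1+a}\bigr|\lesssim \gamma^{-1/2+\delta}$, and then shows that $\gamma P_{s,\gamma}$ is a Cauchy sequence in $\gamma$ by conditioning on the overshoot distribution at an intermediate level, which yields \eqref{e:defG} and \eqref{e:quantitative} simultaneously; the functional equation is read off from a one-step decomposition, and $G(s)/s\to 1$ from iterating the multiplicative bounds. Your argument replaces the invariance principle by classical fluctuation theory: Wald's identity gives $(s+\gamma)P_{s,\gamma}=s+\E L_{s,\gamma}-\E R_{s,\gamma}$, and everything reduces to uniform overshoot moments, which follow from Lorden-type bounds once one knows that the descending ladder height of a centred walk with all moments has all moments (a standard but non-trivial input, e.g.\ Chow/Doney; note also that you are using the centering and unit variance of $\nu$ from the surrounding section, not just the tail hypothesis stated in the proposition). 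This route gives an explicit representation $G(s)=s+\E L_s$, makes \eqref{e:relG} and $G(s)/s\to1$ immediate, and in fact yields an error of order $(1+s)^{1/2}\gamma^{-1/2}$ plus lower-order terms, slightly sharper than \eqref{e:quantitative}; what it buys less cheaply is the comparison with the Brownian gambler's ruin, which the paper needs again later (Corollary~\ref{cor:boundRW} and Section~\ref{sec:convExc}) and gets for free from its coupling.

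One step is overstated as written: the truncation argument for the right overshoot gives $\E R_{s,\gamma}\le C_\delta\,\gamma^{\delta}P_{s,\gamma}+C_M\,\gamma^{-M}$, not the purely multiplicative bound $\E R_{s,\gamma}\le C_\delta\gamma^\delta P_{s,\gamma}$; absorbing the additive remainder into the multiplicative term would require a polynomial lower bound on $P_{s,\gamma}$ uniform in $s\ge0$, which you have not established (and which is of the same nature as the statement being proved). This is harmless for your conclusions — the additive form already gives $\E R_{s,\gamma}\to0$ and is negligible against $(1+s)\gamma^{-1/2+\delta}$ — so you should either state the bound in its additive form, or bypass the truncation altogether by Cauchy--Schwarz: $R_{s,\gamma}$ is dominated by the overshoot at the first passage above $\gamma$, whose second moment is bounded uniformly in $\gamma$ by the same Lorden argument applied to ascending ladder heights, whence $\E R_{s,\gamma}\le C\sqrt{P_{s,\gamma}}$, which slots directly into your final estimate.
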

\begin{proof}
Denote by $y_k$ the $k$th step of the random walk starting at the origin.
Our main tool is the quantitative convergence result \cite{ConvRateBM}, 
which states that the supremum distance between a Wiener process and 
the diffusively rescaled random walk over $n$ steps is of order $n^{-1/4}$.

As a consequence we claim first that, for every $\delta > 0$ there exists a constant $C$ such that, for every $a \in [{1\over 3},3]$, we have the bound
\begin{equ}[e:boundPkk]
\Bigl|P_{a\gamma,\gamma} - {a\over 1+a}\Bigr| \le {C \over \gamma^{{1\over 2}-\delta}}\;,
\end{equ}
valid for every $\gamma \ge 1$. Indeed, for any $n \ge 1$, it follows from the previously quoted convergence result that
there exists a Brownian motion $B$ such that $|B_t - y_{\lfloor t \rfloor}| \le n^{1/4+\delta}$ for all $t \in [0,n]$
with probability greater than $1 - C/n^{q}$. Here, $\delta > 0$ and $q \ge 1$ are arbitrary, but the constant $C$ of course 
depends on them.

Take $n$ such that $n^{1/4+\delta} \le \gamma$.
If $y$ hits $[\gamma,\infty)$ before $(-\infty,-a\gamma]$, then either $\sup_{t \le n} |B_t - y_{\lfloor t \rfloor}| > n^{1/4+\delta}$,
or $\sup_{t \le n} |B_t| \le 3\gamma$, or
$B$ hits $[\gamma-n^{1/4+\delta},\infty)$ before it hits $(-\infty,-a\gamma - n^{1/4+\delta}]$. As a consequence, 
\begin{equ}
P_{a\gamma,\gamma} \le {a\gamma + n^{1/4+\delta} \over (1+a)\gamma} + {C\over n^{q}} + \exp(- c n/\gamma^2) \;.
\end{equ}
Reversing the roles of $\gamma$ and $a\gamma$, we thus obtain the bound
\begin{equ}
\Bigl|P_{a\gamma,\gamma} - {a\over 1+a}\Bigr| \lesssim {n^{1/4+\delta} \over \gamma} + {1\over n^{q}} + \exp(- c n/\gamma^2) \;.
\end{equ}
Choosing $\delta$ small enough and $n = \gamma^{2+\delta}$, the claim \eref{e:boundPkk} then follows.

In order to obtain the convergence of the right hand side in \eref{e:defG}, we make use of the fact that,
for $\bar \gamma > \gamma$, one has the identity
\begin{equ}[e:idenPk]
P_{s,\bar\gamma} = P_{s,\gamma} \int_0^\infty P_{s+\gamma + z,\bar \gamma - \gamma - z }\,\nu_\gamma(dz)\;,
\end{equ}
where $\nu_\gamma$ is the law of the ``overshoot'' $y_n - \gamma$ at the first time $n$ such that 
$y_n \ge \gamma$, conditioned on never reaching below the level $-s$. 
Since $P_{ s+\gamma + z,\bar \gamma - \gamma - z}$ is an increasing function of $z$,
we immediately obtain the lower bound
\begin{equ}
P_{s,\bar\gamma} \ge P_{s,\gamma} P_{s+\gamma,\bar \gamma - \gamma }\;,
\end{equ} 
If we choose $\gamma = a \bar \gamma$ for $a \in [{1\over 4},{1\over 2}]$, it then follows from \eref{e:boundPkk} that
\begin{equ}
P_{s,\bar\gamma} \ge  P_{s,\gamma} \Bigl({\gamma + s \over \bar \gamma + s} - {C\over  \gamma^{{1\over 2}-\delta}}\Bigr) \;,
\end{equ} 
for all $\gamma$ sufficiently large and uniformly over all $s \in [0,\gamma]$. Setting $Q_{s,\gamma} = (\gamma + s)P_{s,\gamma}$, it thus follows that one has the bound
\begin{equ}[e:boundF]
Q_{s,\bar\gamma} \ge  Q_{s,\gamma} \Bigl(1 - {C\over  \gamma^{{1\over 2}-\delta}}\Bigr)\;,
\end{equ}
possibly for a different constant $C$. Let $\gamma_0 \ge 1$ be such that the factor on the right of this equation 
is greater than $1/2$.
By \eref{e:boundPkk}, there then exists $s_0\ge \gamma_0$ such that for $s \ge s_0$ and 
$\gamma \in [s,2s]$ one has $Q_{s,\gamma} \ge C(1+s)$. Furthermore, for $s \le s_0$ and 
$\gamma \in [(1\vee s), 2(\gamma_0\vee s)]$, there exists a non-zero constant such that $Q_{s,\gamma} \ge C$.
Iterating \eref{e:boundF}, we then conclude that there exists a constant $C>0$ such that the bound
\begin{equ}
Q_{s,\gamma} \ge C(1+s)\;,
\end{equ}
holds uniformly over all $s > 0$ and all $\gamma \ge 1 \vee s$.

On the other hand, for arbitrary $\alpha>0$, one has from \eref{e:idenPk} the lower bound
\begin{equ}
P_{s,\bar\gamma} \le \bigl(\nu_\gamma(\{x > \gamma^\alpha\}) + P_{s+\gamma + \gamma^\alpha,\bar \gamma - \gamma - \gamma^\alpha }\bigr)P_{s,\gamma} \;.
\end{equ}
In order to bound $\nu_\gamma(\{x > \gamma^\alpha\})$, we note that 
this event can happen only if either one of the first $\gamma^3$ increments exceeds $\gamma^\alpha$,
or the random walk never exceeds the value $\gamma$ within these $\gamma^3$ steps.
Similarly to before, it then follows that
\begin{equ}
\nu_\gamma(\{x > \gamma^\alpha\}) \lesssim {1\over P_{s,\gamma}} \bigl(\gamma^3 \exp(-c \gamma^{\alpha\beta}\bigr)
+ \gamma^{-q} + \exp(-c \gamma) \bigr)\;,
\end{equ}
for every $q > 0$ and uniformly over $s \le \gamma$. 
It follows from the lower bound on $P_{s,\gamma}$ obtained previously that 
$\nu_\gamma(\{x > \gamma^\alpha\}) \lesssim \gamma^{-q}$ for any power $q > 0$, so that
we obtain the upper bound 
\begin{equ}[e:upBound]
P_{s,\bar\gamma} \le  P_{s,\gamma} \Bigl({\gamma+s \over \bar \gamma+s} + {C\over  \gamma^{{1\over 2}-\delta}}\Bigr) \;,
\end{equ}
with the same domain of validity as before.
Using a very similar argument as before, we obtain a constant $\bar C$ such that 
$\gamma P_{s,\gamma} \le \bar C (1+s)$ uniformly over $s > 0$ and $\gamma \ge 1\vee s$.

Combining the bounds we just obtained, we obtain
\begin{equ}
\big|\bar \gamma P_{s,\bar\gamma} - \gamma P_{s,\gamma}\big| \le {C \over \gamma^{{1\over 2} + \delta}}\;,
\end{equ}
uniformly over $\bar \gamma > \gamma > s$, from which it follows immediately that 
the sequence $\{\gamma P_{s,\gamma}\}_{\gamma \ge 1}$ is Cauchy,
so that it has a limit $G(s)$.

It remains to show that $G$ has the desired properties. The first one follows immediately from the
identity
\begin{equ}
\gamma P_{s,\gamma} = \int_\R \gamma P_{ s+z,\gamma-z}\,\nu(dz)\;,
\end{equ}
which holds provided that we define the integrand to be $1$ for $z > \gamma$ and $0$ for $z < -s$.

In order to show that $G(s) / s \to 1$, we fix some (large) value $s$ and choose $\gamma_n = 2^n s$.
It then follows from \eref{e:boundPkk} that
\begin{equ}
|Q_0 - s| \lesssim s^{{1\over 2}+\delta}\;,
\end{equ}
where we used the notation $Q_n = (\gamma_n + s)P_{s,\gamma_n}$ as a shorthand. 
Furthermore, it follows immediately from \eref{e:upBound} that there exists a constant $C$
independent of $s$ such that $|Q_n| \le C s$ uniformly in $n$.
As a consequence, we obtain the recursive bound
\begin{equ}
\bigl|Q_n-Q_{n-1}\bigr| \le {C s \over \gamma_n^{{1\over 2}-\delta}}\;.
\end{equ}
Summing over $n$ yields $|Q_n - s| \lesssim s^{{1\over 2}+\delta}$, uniformly in $n$, so that the claim follows.
The quantitative error bound \eref{e:quantitative} follows in the same way.
\end{proof}

\begin{corollary}\label{cor:boundRW}
In the same setting as above, one has the bound
\begin{equ}
\Bigl|P_{s,\gamma} - {s \over \gamma+s}\Bigr|\lesssim {1+s^{{1\over 2}+\delta} \over \gamma}\;,
\end{equ}
uniformly for all $s \ge 0$ and $\gamma \ge 1\vee s$.
\end{corollary}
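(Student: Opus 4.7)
The plan is to deduce the corollary directly from the quantitative convergence~\eref{e:quantitative} of Proposition~\ref{prop:defG}, supplemented by a uniform estimate $|G(s) - s| \lesssim 1 + s^{{1\over 2}+\delta}$ that is essentially contained in the last paragraph of the proof of that proposition. Writing $Q_{s,\gamma} = (\gamma+s)P_{s,\gamma}$, the elementary identity
\[
P_{s,\gamma} - \frac{s}{\gamma+s} = \frac{1}{\gamma+s}\bigl(Q_{s,\gamma} - s\bigr)
\]
reduces the claim to bounding $|Q_{s,\gamma}-s|$ and dividing by $\gamma+s \ge \gamma$.

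By the triangle inequality, $|Q_{s,\gamma}-s| \le |Q_{s,\gamma}-G(s)| + |G(s)-s|$. The first summand is at most $C(1+s)/\gamma^{{1\over 2}-\delta}$ by~\eref{e:quantitative}. For the second, I would extract from the proof of Proposition~\ref{prop:defG} the inequality $|Q_n - s| \lesssim s^{{1\over 2}+\delta}$ along the dyadic sequence $\gamma_n = 2^n s$ (uniformly in $n$, for $s$ sufficiently large), and pass to the limit $n\to\infty$ using $Q_n \to G(s)$ to obtain $|G(s)-s| \lesssim s^{{1\over 2}+\delta}$ in that range. For bounded $s$ the same proof already yields the cruder estimate $G(s) \lesssim 1+s$ via~\eref{e:upBound}; the two cases combine to $|G(s)-s| \lesssim 1 + s^{{1\over 2}+\delta}$ uniformly in $s \ge 0$.

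The last step is purely cosmetic: divide by $\gamma+s$ and use the standing hypothesis $\gamma \ge 1 \vee s$, which gives $\gamma+s \ge \gamma$, $\gamma \ge 1$, and $\gamma \ge s$. The contribution of $|G(s)-s|$ produces $(1+s^{{1\over 2}+\delta})/\gamma$ directly, while the first summand contributes $\lesssim (1+s)/((\gamma+s)\gamma^{{1\over 2}-\delta})$; a short case split on $s \le 1$ versus $s > 1$ (using $\gamma \ge s$ in the latter) shows that $(1+s)/\gamma^{{1\over 2}-\delta} \lesssim 1 + s^{{1\over 2}+\delta}$, so this piece too is absorbed into $(1+s^{{1\over 2}+\delta})/\gamma$. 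The only real point of care is the uniform bound on $|G(s)-s|$; however, since the iteration needed is verbatim the one already executed in the proof of Proposition~\ref{prop:defG}, I do not anticipate any serious obstacle.
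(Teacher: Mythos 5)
Your proposal is correct and is essentially the paper's own argument: the paper's proof consists precisely of combining \eref{e:quantitative} with the bounds on $G(s)-s$ extracted from the end of the proof of Proposition~\ref{prop:defG}, which is exactly your decomposition via $Q_{s,\gamma}$ and the triangle inequality. Your additional case split absorbing $(1+s)/\gamma^{{1\over 2}-\delta}$ into $1+s^{{1\over 2}+\delta}$ just makes explicit the routine bookkeeping the paper leaves to the reader.
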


\begin{proof}
Combine \eref{e:quantitative} with the bounds on $G(s) - s$ obtained at the end of the proof above.
\end{proof}

Somewhat surprising is the fact that the function $G$ obtained in the Proposition~\ref{prop:defG} does indeed satisfy \eref{e:idenG}, independently
of the choice of transition probability $\nu$,
provided that we assume that $\nu$ has some exponential moment.

\begin{proposition}\label{prop:expG}
Let $G$ be as in Proposition~\ref{prop:defG} and assume that the law $\nu$ satisfies 
\begin{equ}
\int_\R e^{c|z|}\nu(dz) < \infty\;,
\end{equ}
for some $c > 0$. Then, one has the identity $\int_0^\infty \nu([s,\infty))\,G(s)\,ds = {1\over 2}$.
\end{proposition}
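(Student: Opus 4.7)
The plan is to reduce the identity to an asymptotic computation via the functional equation \eref{e:relG}. First, by Fubini,
\begin{equ}
\int_0^\infty \nu([s,\infty))\, G(s)\,ds = \int_0^\infty F(z)\,\nu(dz)\;,\qquad F(z) := \int_0^z G(y)\,dy\;,
\end{equ}
and, extending $F$ by $0$ on $(-\infty, 0)$, the goal becomes $\int_\R F(z)\,\nu(dz) = 1/2$. To exploit the functional equation $G(s) = \int_{-s}^\infty G(s+z)\,\nu(dz)$, I would integrate it over $s \in [0,M]$ and swap the order of integration. Splitting on the sign of $z$ and changing variables (using $F(M+z)=0$ for $z\le -M$), one obtains, for every $M>0$, the identity
\begin{equ}
F(M) + \int_0^\infty F(z)\,\nu(dz) = \E[F(M+\xi)]\;,
\end{equ}
where $\xi \sim \nu$. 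Thus $\int_0^\infty F(z)\,\nu(dz) = \E[F(M+\xi)] - F(M)$ is independent of $M$, and it suffices to evaluate its limit as $M\to\infty$.

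Writing $F(u) = u^2/2 + R(u)$ for $u\ge 0$ with $R(u) := \int_0^u (G(v)-v)\,dv$, the ``quadratic part'' contributes
\begin{equ}
\E\Bigl[\tfrac{(M+\xi)^2 - M^2}{2}\,\mathbf{1}_{M+\xi\ge 0}\Bigr] \longrightarrow \tfrac{1}{2}\E[\xi^2] = \tfrac{1}{2}\;,
\end{equ}
using $\E[\xi]=0$, $\E[\xi^2]=1$, together with the exponential tail of $\nu$ to see that the truncation error from $\{M+\xi<0\}$ and the correction $M^2\,\nu((-\infty,-M))$ are exponentially small in $M$. The proposition therefore reduces to showing $\E[R(M+\xi)] - R(M) = \E\bigl[\int_M^{M+\xi}(G(v)-v)\,dv\bigr] \to 0$ as $M\to\infty$.

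This last step is the main obstacle, since the bound $|G(v)-v|\lesssim 1 + v^{1/2+\delta}$ inherited from Proposition~\ref{prop:defG} would only yield an error of order $M^{1/2+\delta}$. To sharpen it I would identify $G(s)-s = \E[U_s]$ with the expected undershoot below $-s$ of the unrestricted random walk starting at $0$, which follows by applying Wald's identity $\E[y_T]=0$ to the exit problem from $[-s,\gamma]$ and letting $\gamma\to\infty$, noting that the expected overshoot at the upper barrier is $O(1/\gamma)$ under the moment hypothesis. The exponential moments of $\nu$ transfer (via Wiener--Hopf) to the descending ladder heights $H^-$ of the walk, so the classical renewal theorem yields $\E[U_s]\to \alpha := \E[(H^-)^2]/(2\E[H^-])$ as $s\to\infty$ at an exponentially small rate. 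In particular, $G(v)-v$ is uniformly bounded and $R(v) - \alpha v$ converges to a finite limit; dominated convergence then gives $\E\bigl[\int_M^{M+\xi}(G(v)-v)\,dv\bigr] = \alpha\,\E[\xi] + o(1) = o(1)$, which closes the argument.
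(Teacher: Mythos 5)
Your reduction is correct up to a point, and it in fact parallels the paper more closely than it may appear: integrating \eref{e:relG} over $s\in[0,M]$ and applying Fubini, your identity $F(M)+\int_0^\infty F(z)\,\nu(dz)=\E F(M+\xi)$ is precisely \eref{e:intG} rewritten, and the observation that $\E F(M+\xi)-F(M)$ is independent of $M$ is the same constancy the paper exploits. The two arguments diverge afterwards. The paper takes a Laplace transform in the cut-off (multiplying by $\eps e^{-\eps K}$ and integrating over $K$), which reduces everything to the expansion $\E e^{\eps\xi}-1=\eps^2/2+\CO(\eps^3)$, the first-order asymptotics $G(s)/s\to 1$, and dominated convergence; no information about $G(s)-s$ is needed. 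You instead evaluate the constant by letting $M\to\infty$ directly, which genuinely requires second-order control of $G$, namely that $G(v)-v$ is bounded and has a limit -- and that is where you invoke Wald's identity, ladder heights and the renewal theorem.

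This is where there is a real gap: the renewal-theorem step fails for lattice $\nu$, which the hypotheses allow. For the simple random walk $\xi=\pm 1$, gambler's ruin gives $G(s)=\lceil s\rceil$ exactly, so $G(s)-s=\lceil s\rceil-s$ is asymptotically periodic and does not converge to any constant $\alpha$; in particular the claimed convergence of $\E[U_s]$ (let alone at exponential rate) and the convergence of $R(v)-\alpha v$ are false, and the pointwise limit needed in your final dominated-convergence step does not exist as stated. The conclusion survives in this example because $\xi$ takes values in the period of the limiting sawtooth, so $\int_M^{M+\xi}(G(v)-v)\,dv$ integrates it over whole periods (for $\xi=\pm1$ it equals $\pm1/2$ exactly) and the expectation still vanishes in the limit as it is proportional to $\E\xi=0$; but this case distinction, together with the arithmetic subtleties when the support of $\nu$ lies on a shifted lattice, has to be addressed explicitly -- only Lorden's uniform bound $\sup_s \E[U_s]\le \E[(H^-)^2]/\E[H^-]$ holds unconditionally. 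The other ingredients of your route (the identification $G(s)-s=\E[U_s]$ via optional stopping, with the overshoot contribution at the upper barrier of order $(1+s)/\gamma$ vanishing as $\gamma\to\infty$, and the moment transfer to the descending ladder height under the exponential moment assumption) are sound, but as written the proof does not cover all $\nu$ admitted by the proposition, whereas the paper's Laplace-transform argument does, using nothing beyond what Proposition~\ref{prop:defG} provides.
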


\begin{remark}
Note that, by Fubini's theorem,
\begin{equ}
\int_0^\infty \nu([s,\infty))\,G(s)\,ds = \int_0^\infty \int_0^s G(y)\,dy\,\nu(ds)\;,
\end{equ}
so that we do obtain \eref{e:idenG}.
\end{remark}

\begin{proof}
Integrating \eref{e:relG} from $0$ to an arbitrary value $K > 0$ and applying Fubini's theorem, we obtain the identity
\begin{equ}[e:intG]
\int_0^K G(s)\,ds = \int_0^\infty G(z)\, \nu([z-K,z])\,dz\;.
\end{equ}

In this proof we denote by $\CI = \int_0^\infty G(z)\nu([z,\infty))\,dz$ the quantity of interest.
Simple algebraic manipulations then yield from \eref{e:intG}
\begin{equs}
\CI &= \int_0^\infty G(z)\,\nu([z-K,\infty))\,dz -  \int_0^\infty G(z)\,\nu([z-K,z])\,dz\\
 &= \int_0^\infty G(z)\,\nu([z-K,\infty))\,dz -  \int_0^K G(z)\,dz\\
 &= \int_0^\infty G(z) \bigl(\nu([z-K,\infty)) - \one_{z < K}\bigr)\,dz\;.
\end{equs}
Since this identity holds for every $K>0$, it follows in particular that one has
\begin{equs}
\CI &= \eps \int_0^\infty G(z) \int_0^\infty \bigl(\nu([z-K,\infty)) - \one_{z < K}\bigr) e^{-\eps K}\,dK\,dz\\
&= \int_0^\infty G(z) \Bigl(\eps \int_0^\infty \nu([z-K,\infty)) e^{-\eps K}\,dK - e^{-\eps z}\Bigr)\,dz\;,\label{e:boundII}
\end{equs}
for every $\eps > 0$. At this stage, we note that one has the identity
\begin{equ}
\eps \int_0^\infty \nu([z-K,\infty)) e^{-\eps K}\,dK = e^{-\eps z} \E e^{\eps \xi} - \eps e^{-\eps z} \int_{z}^{\infty} e^{\eps K} \nu([K,\infty))\,dK\;,
\end{equ}
where $\xi$ denotes an arbitrary random variable with law $\nu$.
Since $\nu$ has some exponential moment by assumption, $\nu([K,\infty))$ decays exponentially so that 
the second term in this identity satisfies
\begin{equ}
\Bigl|\eps e^{-\eps z} \int_{z}^{\infty} e^{\eps K} \nu([K,\infty))\,dK\Bigr| \le C \eps e^{-\gamma z}\;,
\end{equ}
for some constants $\gamma, C>0$, provided that $\eps$ is small enough. 
Inserting this into \eref{e:boundII}, it follows that
\begin{equ}
\CI = \int_0^\infty G(z) e^{-\eps z} \E \bigl(e^{\eps \xi} - 1\bigr)\,dz  + \CO(\eps)\;.
\end{equ}
At this stage, we use again the fact that $\xi$ has exponential moments to deduce that
\begin{equ}
\E e^{\eps \xi} - 1 = {\eps^2 \over 2} + \CO(\eps^3)\;,
\end{equ}
where we used the fact that $\E \xi = 0$ and $\E \xi^2 = 1$, so that
\begin{equ}
\CI = {\eps^2 \over 2} \int_0^\infty G(z) e^{-\eps z} \,dz + \CO(\eps)\;.
\end{equ}
It then follows from the fact that $\lim_{s \to \infty} G(s)/s = 1$ and the dominated convergence theorem that
\begin{equ}
\CI = \lim_{\eps \to 0} {\eps^2 \over 2} \int_0^\infty z e^{-\eps z} \,dt = {1\over 2}\;,
\end{equ}
which is precisely the desired expression. 
\end{proof}

\begin{remark}
It is clear that these results should hold under much weaker integrability conditions on $\nu$. However,
since we need some exponential moments on $\nu$ at several places in the sequel, we did not try to improve on this.
\end{remark}

\subsection{Some properties of the limiting process}
\label{sec:hairy}

In this section, we provide a rigorous definition of the limiting process loosely defined in Section~\ref{sec:limit},
and we study some of its properties.
In order to be able to use existing results on Brownian excursions, we restrict ourselves to the
same situation as in Section~\ref{sec:BMformal}, namely
the case where the underlying diffusion is a Brownian motion and the potential
$V(x) = -ax$ is linear. We call the resulting object the \textit{Brownian fan}.

\subsubsection{Recursive Poisson point processes}

Before we give a formal definition of the Brownian fan, we 
define a ``recursive Poisson point process''. Loosely speaking, this is a Crump-Mode-Jagers process \cite{CMJ}
with Poisson distributed offspring, but where the number of offspring of any given individual
is allowed to be almost surely infinite. Note again that our construction
is very similar to the one given in \cite{VirginIsland}.
Given a Polish space $\CX$ and a function $F \colon \CX \to \R_+$, we denote throughout
this section by $\MM_+^F(\CX)$ the space of $\sigma$-finite measures $\mu$ on $\CX$ such that 
\begin{equ}
\mu \bigl(F^{-1}(0)\bigr) = 0\;,\qquad \mu \bigl(\{x\,:\, F(x) > \eps\}\bigr) < \infty\;,
\end{equ}
for all $\eps > 0$. We endow this with the topology of convergence in total variation on each set of the form
$\{x\,:\, F(x) > 1/n\}$.
Given a (measurable) map $\CQ$ from $\CX$ to $\MM_+^F(\CX)$,
we can then build for every $x \in \CX$
a point process as follows.

Define $\mu^0_x = \delta_x$ and, for $n \ge 1$,
 define $\mu^n_x$ recursively as a (conditionally independent of the $\mu^{\ell}_x$ with $\ell < n$) 
realisation of a Poisson point process
with intensity measure 
\begin{equ}
\CQ_n = \int_\CX \CQ(y)\,\mu^{n-1}_x(dy)\;,
\end{equ}
where we view $\mu^n_x$ as a random $\sigma$-finite 
positive integer-valued measure on $\CX$.
(In principle, it may happen that $\CQ_n\bigl(\{x\,:\, F(x) > \eps\}\bigr) = \infty$ for some $\eps>0$. In this case,
our construction stops there.) 

When then set
\begin{equ}[e:PPP]
\mu^{[n]}_x = \sum_{\ell=0}^n \mu^n_x\;,
\end{equ}
and we call $\mu^{[n]}_x$ the \textit{recursive Poisson point process} of depth $n$ with kernel $\CQ$.
We will occasionally need to refer the Brownian fan spawned by an initial Brownian motion $w$.  For this purpose we will use the symbol $\mu^{[n]}_w$ (or $\mu^{n}_w$ for a specific generation) and rely on the context to differentiate $\mu^{[n]}_x$ 
and $\mu^{[n]}_w$.  If, in these symbols we omit the subscript entirely then it is assumed that $x=0.$
In general, there is no reason to expect the sequence $\mu^{[n]}_x$ to converge to a finite limit. 
However, one has the following simple
criterion ensuring that this is the case:

\begin{lemma}\label{lem:intPPP}
Let $F$ and $\CQ$ be as above and assume that
there exists $c < 1$ such that $\int F(y) \CQ(x,dy) \le c F(x)$ for every $x \in \CX$.
Then, for every $x\in \CX$, there exists a random $\sigma$-finite 
measure $\mu^{[\infty]}_x$ on $\CX$ such that $\lim_{n \to \infty} \E \int F(y) \bigl(\mu^{[\infty]}_x - \mu^{[n]}_x\bigr)(dy) = 0$.
\end{lemma}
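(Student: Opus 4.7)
The plan is to establish a geometric bound on the expected $F$-mass of each generation, which will immediately yield both the existence of the limit $\mu^{[\infty]}_x$ and the convergence in the stated sense.

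First I would set up the key recursive inequality. Campbell's formula for Poisson point processes, applied conditionally on the previous generation, gives
\begin{equ}
\E\!\left(\int F(y)\,\mu^n_x(dy)\,\Big|\,\mu^{n-1}_x\right) = \int F(y)\,\CQ_n(dy) = \int\!\!\int F(y)\,\CQ(z,dy)\,\mu^{n-1}_x(dz)\;.
\end{equ}
The hypothesis $\int F(y)\,\CQ(z,dy) \le c\,F(z)$ then bounds this by $c\int F\,d\mu^{n-1}_x$. Taking unconditional expectation and iterating from $\mu^0_x = \delta_x$ yields
\begin{equ}
\E\int F(y)\,\mu^n_x(dy) \le c^n F(x)\;.
\end{equ}
Along the way, this same bound shows that $\int F\,d\CQ_n < \infty$ almost surely at every stage, so $\CQ_n$ lies in $\MM_+^F(\CX)$ and the recursive construction of the Poisson point processes never aborts; in particular all generations $\mu^n_x$ are almost surely well-defined.

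Next I would define the candidate limit as the formal sum $\mu^{[\infty]}_x := \sum_{n=0}^\infty \mu^n_x$, viewed as a positive measure on $\CX$. Summing the geometric bound gives
\begin{equ}
\E\int F(y)\,\mu^{[\infty]}_x(dy) \le \frac{F(x)}{1-c} < \infty\;,
\end{equ}
and by Markov's inequality $\mu^{[\infty]}_x(\{F > \eps\}) \le \eps^{-1}\int F\,d\mu^{[\infty]}_x$ is almost surely finite for every $\eps > 0$. Combined with the fact that each $\mu^n_x$ charges only $\{F > 0\}$, this confirms that $\mu^{[\infty]}_x \in \MM_+^F(\CX)$ almost surely.

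Finally, the convergence statement is obtained by the tail of the same geometric series:
\begin{equ}
\E\int F(y)\,\bigl(\mu^{[\infty]}_x - \mu^{[n]}_x\bigr)(dy) = \sum_{\ell > n}\E\int F\,d\mu^\ell_x \le \frac{c^{n+1}}{1-c}\,F(x) \to 0\;,
\end{equ}
as $n \to \infty$. I do not foresee a serious obstacle here: the argument is essentially Campbell's formula plus geometric summation. The only point requiring mild care is checking that the PPP construction is not vacuous at any stage, and this is handled automatically by the finiteness of $\E\int F\,d\CQ_n$ established in the first step.
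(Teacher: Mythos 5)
Your proposal is correct and follows essentially the same route as the paper: conditioning on the previous generation, applying the hypothesis $\int F(y)\,\CQ(z,dy)\le cF(z)$ to obtain $\E\int F\,d\mu^n_x\le c^nF(x)$, and summing the geometric series. The extra remarks about the construction not aborting and $\mu^{[\infty]}_x\in\MM_+^F(\CX)$ are fine additions but not a different argument.
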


\begin{proof}
Fix $x \in \CX$ and denote by $\FF_n$ the $\sigma$-algebra generated by $\mu^{[n]}_x$.
It then follows from the definition of the $\mu^{[n]}_x$ that 
\begin{equ}
\E \Bigl(\int_\CX F(y)\,\mu^{\ell+1}_x(dy)\,\Big|\, \FF_{\ell}\Bigr) = \int_\CX \int_\CX F(y) \CQ(z,dy)\,\mu^{\ell}_x(dz)
\le c \int_\CX F(z)\,\mu^{\ell}_x(dz)\;.
\end{equ}
As a consequence, one has $\E \int_\CX F(y)\,\mu^{\ell}_x(dy) \le c^\ell F(x)$, and the claim follows.
\end{proof}

\begin{remark}
A useful identity is the following. Denote by $\{\tilde \mu^{[\infty]}_y\}_{y \in \CX}$ a collection of 
\textit{independent} copies of recursive Poisson point processes with ``initial conditions'' $y$. 
Then one has the identity in law
\begin{equ}[e:recursion]
\mu^{[\infty]}_x \eqlaw \delta_x + \int_\CX \tilde \mu^{[\infty]}_y\, \mu^{1}_x(dy)\;,
\end{equ}
where, as before, $\mu^{1}_x$ is a realisation of a Poisson point process with intensity $\CQ(x,\cdot)$, which is 
itself independent of the $\tilde \mu^{[\infty]}_y$.
This identity makes sense since the integral on the right is really just a countable sum.
\end{remark}


\subsubsection{Construction of the Brownian fan}
\label{sec:BFan}

We now denote by $\CE$ the set of
excursions with values in $\R$. We consider elements of $\CE$ as
triples $(s,t,y)$ Where $s < t \in \R \cup \{+\infty\}$, and $y \in \CC(\R,\R)$ has the property that 
$y_{\tau}  = y_t$ for $\tau \ge t$ and $y_{\tau}  = y_{s} $ for $\tau \le s$. 
We also write $\CE_0$ for the subset of those triples $(s,t,y)$ such that $s = 0$.

Denoting a generic excursion by $w$, we write 
$\s(w)$ for its starting time and $\e(w)$ for its end time, \ie, $\s(s,t,y) = s$, 
and $\e(s,t,y) = t$. We also denote by $\l(w)$ the lifetime of the excursion, which is the 
interval $\l(w) = [\s(w), \e(w)]$.
In order to keep notations compact, we will also identify an excursion with its path 
component, making the abuse of notation
$w_t = y_t$. 
There is a natural metric on $\CE$ given by
\begin{equ}[e:distCE]
d(\tilde w,w) = d_\l(\tilde w,w) + \sum_{k \ge 1} 2^{-k} \Bigl(1 \wedge \sup_{|t| \le 2^k} |\tilde w_t - w_t|\Bigr)\;,
\end{equ}
where the distance $d_\l$ between the supports is given by
\begin{equ}
d_\l(\tilde w,w) = 1\wedge \bigl(|\s(\tilde w) - \s(w)| + |\tanh \e(\tilde w) - \tanh \e(w)|\bigr)\;. 
\end{equ}
The reason for this particular choice of metric is that it ensures that $\CE$ is a Polish space, while still
allowing for infinite excursions. 

For $\tau \in \R$ and $v \in \CE$,
we denote by $\Theta_{v,\tau}\colon \CE \to \CE$ the shift map given by
\begin{equ}
\Theta_{v,\tau} \colon (s,t,w) \mapsto \bigl(s + \tau, t + \tau, w_{ \cdot+\tau} + v_{\tau}\bigr)\;,
\end{equ}
which essentially changes the coordinate system so that the origin $(0,0)$ is mapped to $(\tau, v_{\tau})$.
Denoting as before by $\Q$ the standard It\^o excursion measure, we now give the following definition:

\begin{definition}\label{def:fan}
The \textit{Brownian fan} with intensity $a>0$ is the recursive Poisson point process on $\CE$ with
kernel
\begin{equ}[e:defHairy]
\CQ(w,\cdot) = {a \over 2} \int_{\s(w)}^{\e(w)} \Theta_{w,\tau}^\star \Q\, d\tau\;,
\end{equ}
and initial condition given by a realisation of Brownian motion, starting at the origin and killed when it reaches the level $-L$, where $L$ is exponentially
distributed with mean $a$.
\end{definition}

\begin{figure}
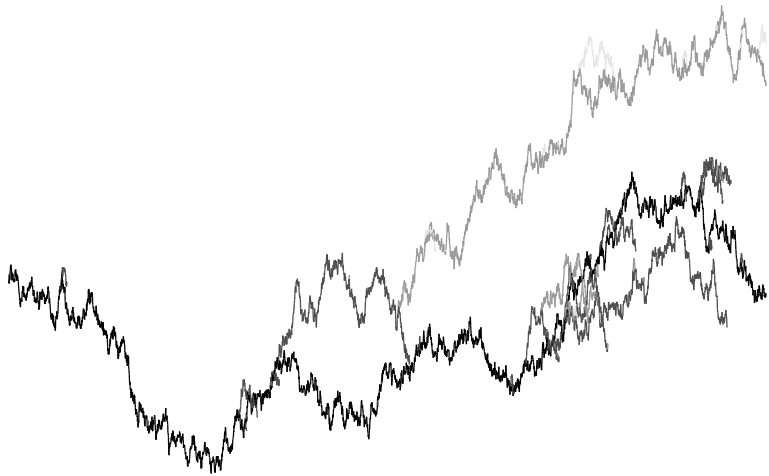

\begin{center}
\mhpastefig{sample}
\end{center}
\caption{A typical realisation of the Brownian fan with intensity $a={3\over 2}$ over the time interval $[0,1]$. Successive generations are draw in lighter shades of grey.}
\end{figure}

\begin{remark}
The reason for killing the original Brownian motion at this particular level is natural, due to the distribution
of the initial tag in Step~1 of the algorithm. It is however essentially irrelevant to the mathematical construction.
\end{remark}

\begin{remark}
Formally, the Brownian fan is a particular case of the Virgin Island Model \cite{VirginIsland}
with $a$ playing the same role in both models, $h = a$, and $g = 1/2$. The differences are twofold.
First, the case of constant non-vanishing $a$ actually doesn't fall within the framework of \cite{VirginIsland} since
the author there uses the standing assumption that $a(0) = 0$. The other difference is mostly one of perspective. while we
have so far defined the Brownian fan as a point process on a space of excursions, one of the 
purposes of this article is to show that it is also well-behaved as an actual Markov process with
values in a suitable state space of (possibly infinite) point configurations.
\end{remark}

\begin{remark}
By only keeping track of the genealogy of the particles and not their precise locations, one can construct
a ``real tree'' on top of which the Brownian fan is constructed (loosely speaking) by attaching a Brownian
excursion to each branch. This is very similar in spirit to Le Gall's construction of the Brownian map \cite{BrownianMap,BrownianMap2}, 
starting from Aldous's continuous random tree \cite{CRT}. The scaling properties of the Brownian fan
however are quite different. In particular, Theorem~\ref{theo:Nt} below implies that the underlying tree has Hausdorff dimension
$1$, as opposed to the CRT which has Hausdorff dimension $2$.
\end{remark}

Before we proceed, let us show that it is possible to verify the assumptions of Lemma~\ref{lem:intPPP}, so that 
this object actually exists for every $a > 0$:

\begin{proposition}
The kernel $\CQ$ defined in \eref{e:defHairy} satisfies the assumption of Lemma~\ref{lem:intPPP} with the choice
\begin{equ}
F(w) = e^{-\eta \s(w)} \bigl(1 - e^{-\eta |\l(w)|}\bigr)\;,
\end{equ}
provided that $\eta$ is large enough.
\end{proposition}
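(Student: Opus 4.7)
The plan is to compute $\int F(v)\,\CQ(w,dv)$ directly from \eref{e:defHairy} and to exploit the fact that the specific choice of $F$ is engineered so that the $\tau$-averaging built into $\CQ$ reproduces $F(w)$ up to a factor tending to $0$ as $\eta\to\infty$.

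First I would unpack the shift. Since the It\^o measure $\Q$ is supported on excursions $u \in \CE_0$ with $\s(u)=0$, any $v$ in the support of $\Theta_{w,\tau}^\star\Q$ satisfies $\s(v)=\tau$ and $|\l(v)|=|\l(u)|$, so that
\begin{equ}
F\bigl(\Theta_{w,\tau}(u)\bigr) = e^{-\eta\tau}\bigl(1-e^{-\eta|\l(u)|}\bigr)\;.
\end{equ}
By Fubini this gives
\begin{equ}
\int F(v)\,\CQ(w,dv) = \frac{a}{2}\,\Bigl(\int_{\s(w)}^{\e(w)} e^{-\eta\tau}\,d\tau\Bigr)\int_{\CE_0}\bigl(1-e^{-\eta|\l(u)|}\bigr)\,\Q(du)\;.
\end{equ}
The $\tau$-integral evaluates to $\eta^{-1}\bigl(e^{-\eta\s(w)}-e^{-\eta\e(w)}\bigr)=\eta^{-1}F(w)$, which is precisely the mechanism that makes the product form of $F$ effective.

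Next I would evaluate the integral against $\Q$. Under the normalisation $\Q=\lim_{\eps\to0}\eps^{-1}\Q_\eps$ adopted in Section~\ref{sec:BMformal}, the length $|\l|$ under $\Q$ has the classical It\^o density proportional to $\ell^{-3/2}$ on $(0,\infty)$, and the standard Laplace identity $\int_0^\infty(1-e^{-\eta\ell})\ell^{-3/2}\,d\ell=2\sqrt{\pi\eta}$ yields $\int_{\CE_0}(1-e^{-\eta|\l(u)|})\,\Q(du)=C_0\sqrt{\eta}$ for some fixed $C_0>0$. Combining the two pieces gives
\begin{equ}
\int F(v)\,\CQ(w,dv) = \frac{aC_0}{2\sqrt{\eta}}\,F(w)\;,
\end{equ}
so the contraction $\int F\,d\CQ(w,\cdot)\le cF(w)$ with some fixed $c<1$ holds as soon as $\eta$ is chosen large enough.

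Finally I would verify that $\CQ(w,\cdot)\in\MM_+^F(\CE)$: the requirement $\CQ(w,\{F=0\})=0$ is immediate because $\Q$-almost every excursion has $|\l|>0$ and $\s(v)=\tau<\infty$; and the requirement $\CQ(w,\{F>\delta\})<\infty$ follows from the inclusion $\{F>\delta\}\subset\{|\l|\ge\delta'\}$ together with the classical finiteness of $\Q(\{|\l|\ge\delta'\})$ for every $\delta'>0$. I do not anticipate any genuine obstacle --- the whole point of the product form of $F$ is that it is, up to the scalar $\eta^{-1}C_0\sqrt{\eta}$, an eigenfunction of the averaging operator encoded by $\CQ$, and since only a strict inequality is needed, the precise value of $C_0$ is immaterial.
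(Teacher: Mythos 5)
Your proposal is correct and follows essentially the same route as the paper's proof: shift invariance of the lifetime under $\Theta_{w,\tau}$, Fubini to factor the $\tau$-integral (which reproduces $\eta^{-1}F(w)$) from the $\Q$-integral of $1-e^{-\eta|\l|}$ against the $s^{-3/2}$ lifetime density (giving a constant times $\sqrt{\eta}$), and then choosing $\eta$ large so that the resulting factor $\sim a/\sqrt{\eta}$ is below $1$. The extra check that $\CQ(w,\cdot)\in\MM_+^F(\CE)$ is a harmless addition the paper leaves implicit.
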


\begin{proof}
It follows from the properties of $\Q$ that there exists a constant $C$ independent of $\gamma$ and $a$ such that
\begin{equs}
\int_\CE F(\tilde w) \CQ(w,d\tilde w) &= {a\over \sqrt{2\pi}} \int_{\s(\tilde w)}^{\e(\tilde w)} e^{-\eta \tau} \int_0^\infty \bigl(1 - e^{-\eta s}\bigr) s^{-3/2}\,ds\,d\tau \\
&\le {Ca  \over \sqrt \eta}  \bigl(e^{-\eta \s(\tilde w)} - e^{-\eta \e(\tilde w)}\bigr) = {Ca  \over \sqrt \eta} F(v)\;,
\end{equs}
where $C$ is a constant independent of $a$ and $\eta$.
The claim then follows by choosing $\sqrt \eta > Ca$.
\end{proof}

The remainder of this section is devoted to a study of the basic properties of the Brownian fan.
In particular,we will show that there exists a suitable space $\CX$ 
of particle configurations such that it can be viewed as a $\CX$-valued Markov process with continuous
sample paths that satisfies the Feller property.

\subsubsection{Number of particles and workload rate}
\label{sec:numPart}

Define the set $\CN_t \subset \CE$ of excursions that are ``alive at time $t$'' by 
\begin{equ}
\CN_t = \{w \in \CE\,:\, \s(w) < t < \e(w)\}\;.
\end{equ}
With this notation, the number of particles alive at time $t$ for the Brownian fan is given by
\begin{equ}
N_t = \mu^{[\infty]}(\CN_t)\;,
\end{equ}
which is in principle allowed to be infinite.

\begin{theorem}\label{theo:Nt}
There exist a constant $C>0$ and a strictly positive continuous decreasing function $\lambda \colon \R_+ \to \R_+$ such that
\begin{equ}[e:boundNt]
\E \exp \bigl( \lambda_t N_t\bigr) \le C\;,
\end{equ}
holds uniformly over all $t>0$.
\end{theorem}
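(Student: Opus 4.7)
I would prove the bound by analysing the Laplace transform of $N_t$ through the recursive Poisson point process structure of the fan. Writing $N_t^w$ for the count of particles alive at time $t$ in the sub-fan started by an excursion $w$ and setting $\phi(w,t,\lambda) := \E \exp(\lambda N_t^w)$, the Laplace functional formula for Poisson point processes together with the recursion \eref{e:recursion} gives a self-consistent equation for $\phi$. A key observation is that $\phi(w,t,\lambda)$ depends on $w$ only through its lifetime $\ell = \e(w)-\s(w)$, since $\CQ(w,\cdot)$ in \eref{e:defHairy} is translation-invariant and the count ignores spatial positions. Writing $\phi(\ell,t,\lambda)$ for the common value yields
\begin{equ}
\phi(\ell,t,\lambda) = e^{\lambda\mathbf{1}_{t<\ell}}\exp\bigl(A(t)-A((t-\ell)_+)\bigr)\;,\qquad A(u) := \tfrac{a}{2}\int_0^u H(s,\lambda)\,ds\;,
\end{equ}
with $H(u,\lambda) = \int_0^\infty(\phi(\ell,u,\lambda)-1)\,\Q(T\in d\ell)$, and an analogous computation for the Brownian fan initiated by the killed Brownian motion gives $\E e^{\lambda N_t} \le e^{\lambda+A(t)}$, so it suffices to bound $A(t)$.

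Using that $\Q(T\in d\ell) = c\,\ell^{-3/2}\,d\ell$ under the standard normalisation of It\^o's measure, I would split $H$ into its $\ell\ge u$ and $\ell<u$ contributions and perform an integration by parts designed to absorb the $\ell^{-3/2}$ singularity at zero. Substituting $B(u) = e^{-A(u)}$ linearises the result into the Abel-type Volterra equation
\begin{equ}
-\tfrac{1}{a}B'(u) = c(e^\lambda-1)\,u^{-1/2} - c\int_0^u \frac{B'(v)}{\sqrt{u-v}}\,dv\;,\qquad B(0)=1\;,
\end{equ}
which is solved exactly by Laplace transform as $\widetilde{B'}(\xi) = -ac(e^\lambda-1)\sqrt{\pi}/(\sqrt{\xi}-ac\sqrt{\pi})$. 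Inversion yields an explicit Mittag-Leffler-type series in $\sqrt{u}$. The resulting $B(u;\lambda)$ is strictly decreasing in $u$, continuous and monotone in $\lambda$, and satisfies $B(u;\lambda)\to 1$ uniformly on compacts as $\lambda\to 0^+$. Thus its first zero $t^*(\lambda)$ satisfies $t^*(\lambda)\to\infty$ as $\lambda\to 0^+$, and for each $t$ I would define $\lambda_t$ as the largest $\lambda$ with $B(t;\lambda)\ge 1/2$. This $\lambda_t$ is then strictly positive, continuous, and decreasing in $t$, and forces $A(t)\le\log 2$, giving $\E e^{\lambda_t N_t} \le 2e^{\lambda_t}\le 2e$ uniformly in $t$.

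The main obstacle is justifying the formal analysis rigorously, since the recursion for $\phi$ presumes $H(u,\lambda)$ is finite — precisely what we are trying to prove. I would address this via the generation-$n$ approximations $\mu^{[n]}$ of Lemma~\ref{lem:intPPP}: the Laplace transforms $\phi^{(n)}$ of $N_t^{(n)} := \mu^{[n]}(\CN_t)$ satisfy a truncated version of the recursion in which the inner expectation is replaced by its depth-$(n-1)$ analogue, and induction on $n$ together with monotonicity in $n$ shows that $\phi^{(n)}$ is dominated by the explicit solution $\exp(\lambda+A)$ constructed above. Monotone convergence $\mu^{[n]}\uparrow\mu^{[\infty]}$ then transfers the uniform bound to $N_t$, and a similar truncation $\ell\ge\eps$ justifies the integration by parts across the singularity.
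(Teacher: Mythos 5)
Your argument starts from exactly the same structural skeleton as the paper's proof: you condition on the initial excursion, use the Poisson/Campbell formula to express the log-Laplace transform of the particle count through a kernel that depends on an excursion only through its lifetime, and exploit the fact that $\e(w)$ is distributed like $s^{-3/2}ds$ under $\Q$; your self-consistent identity for $\phi(\ell,t,\lambda)$ is precisely the paper's relation \eref{e:relNG} combined with the identity $M^\lambda_t=\int_\CE\bigl(e^{N^\lambda_{\e(w),t}}-1\bigr)\Q(dw)$. Where you genuinely diverge is in how the resulting Volterra-type equation is resolved. The paper never solves it: it imposes the smallness condition \eref{e:assumption}, linearises the inequality via $e^x-1\le 2x$, and closes the estimate with the fractional Gronwall lemma, finally taking $\lambda_t=K^{-1}e^{-Kt}$. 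You instead notice that the substitution $B=e^{-A}$ linearises the fixed-point equation exactly (I checked: the integration by parts against $\ell^{-3/2}$ does produce your Abel equation, and its Laplace transform is as you state), solve it explicitly, and read $\lambda_t$ off the explicit solution; moreover, since the $\lambda$-dependence factors as $1-B(t;\lambda)=(e^\lambda-1)\Psi(t)$ with $\Psi$ positive and increasing, your $\lambda_t$ is indeed positive, continuous and decreasing. Your generation-truncation induction is the right way to make the formal computation rigorous, and it only requires verifying directly that the explicit $A=-\log B$ satisfies the fixed-point relation on the region where $B\ge 1/2$ (so uniqueness for the Abel equation is not even needed — a supersolution suffices). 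What your route buys is sharper, more explicit information (a Mittag-Leffler profile, an explicit $\lambda_t$, the exact blow-up time of the Laplace transform); what the paper's route buys is robustness, since no exact solvability is used. One small repair: with $\lambda_t$ defined as the largest $\lambda$ with $B(t;\lambda)\ge 1/2$, one has $\lambda_t\to\infty$ as $t\to 0$, so your final bound $2e^{\lambda_t}$ is not uniform in $t$; simply replace $\lambda_t$ by $1\wedge\lambda_t$ (still strictly positive, continuous and decreasing), which gives $\E e^{\lambda_t N_t}\le 2e$ uniformly and completes the proof.
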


\begin{remark}
We will see in the proof that one can choose $\lambda$ of the form
\begin{equ}[e:goodlambda]
\lambda_t = K^{-1} e^{-Kt}\;,
\end{equ}
for $K$ sufficiently large.
\end{remark}

\begin{proof}
For $\lambda > 0$ and $s,t \ge 0$, set
\begin{equ}
N^\lambda_{s,t} = \log \E \exp \bigl(\lambda \mu^{[\infty]}_w(\CN_t)\bigr)\;,
\end{equ}
where $w$ is any excursion starting from $0$ with lifetime $s$. Since, by the definition \eref{e:defHairy},
the value of $N^\lambda_{s,t}$  does not depend on the precise choice of excursion, we do not include it in the notation.
It also follows from the construction of $\mu^{[\infty]}_w$ that the function
$t \mapsto N^\lambda_{t,t}$ is increasing in $t$ and that $N^\lambda_{s,t} = N^\lambda_{t,t}$ for $s \ge t$.

We also define $M^\lambda_{t}$ by
\begin{equ}[e:defGlambda]
M^\lambda_{t} = \log \E \exp \Bigl(\lambda \int_{\CE} \mu^{[\infty]}_w(\CN_t)\, \cM(dw)\Bigr)\;,
\end{equ}
where $\cM$ is a Poisson random measure with intensity measure $\Q$ and the realisations $\mu^{[\infty]}_w$ are independent of $\cM$ and of each other.
While $N^\lambda$ measures the
total number of offspring alive at time $t$ due to an excursion starting at time $0$, $M^\lambda$ measures
the rate at which these offspring are created. 

Indeed, combining \eref{e:recursion} with the definition of $\mu_{w}^1$ and the superposition principle
for Poisson point processes, we have the identity 
\begin{equ}[e:relNG]
N^\lambda_{s,t} = \lambda \one_{s \ge t} + \frac{a}{2} \int_0^{s \wedge t} M^\lambda_{t-r}\,dr\;.
\end{equ}
It therefore remains to obtain suitable bounds on $M_t^\lambda$.

It follows from \eref{e:relNG} and standard properties of Poisson point processes (see for example \cite[Theorem~6.3]{MR2356959}) that one has the identity
\begin{equs}
M^\lambda_{t} &= \int_\CE \bigl(\E \exp \bigl(\lambda \mu^{[\infty]}_w(\CN_t)\bigr) - 1\bigr)\,\Q(dw)
 = \int_\CE \bigl(e^{N^\lambda_{\e(w),t}} - 1\bigr)\,\Q(dw)\\
 &\le \int_\CE \Bigl( \exp \Bigl(\lambda\one_{\CN_t}(w) + \frac{a}{2} \int_0^{\e(w)\wedge t} M^\lambda_{t-s}\,ds\Bigr) - 1\Bigr)\,\Q(dw)\;.
 \end{equs}
At this stage, we note that the integrand appearing in this expression depends on $w$ only
through $\e(w)$. It is then
convenient to break the integral into a contribution coming from $\e(w) > t$, as well as its complement.
Since, under $\Q$, the quantity $\e(w)$ is distributed according to the measure ${s^{-3/2}\over \sqrt{2\pi}}ds$, 
this yields
\begin{equs}
M^\lambda_{t} &\le \Bigl(  \exp \Bigl(\lambda + \frac{a}{2}\int_0^{t} M^\lambda_r\,dr\Bigr)  - 1\Bigr)\,\Q\bigl(\e(w) \ge t\bigr)\\
&\quad + \int_0^t \Bigl(  \exp \Bigl(\frac{a}{2}\int_0^{s} M^\lambda_{t-r}\,dr\Bigr)  - 1\Bigr)\,{s^{-3/2}\over \sqrt{2\pi}}\,ds\;.
\end{equs}
We now assume that both $\lambda$ and $t$ are sufficiently small so that
\begin{equ}[e:assumption]
\lambda + \frac{a}{2} \int_0^{t} M^\lambda_r\,dr \le 1\;.
\end{equ}
This assumption allows us to use the bound $e^t - 1 \le 2t$, so that we obtain the more manageable expression
\begin{equs}
M^\lambda_{t} &\le {4 t^{-1/2} \over \sqrt{2\pi}}\Bigl(\lambda + \frac{a}{2}\int_0^{t} M^\lambda_r\,dr\Bigr)
+ a \int_0^t \int_0^{s} M^\lambda_{t-r}\,dr\,{s^{-3/2}\over \sqrt{2\pi}}\,ds\\
&= {4 \lambda t^{-1/2} \over \sqrt{2\pi}} +
{2 a t^{-1/2} \over \sqrt{2\pi}}\int_0^{t} M^\lambda_r\,dr
+ a \int_0^t \int_s^{t} M^\lambda_r\,dr\,{(t-s)^{-3/2}\over \sqrt{2\pi}}\,ds\\
&= {4\lambda t^{-1/2} \over \sqrt{2\pi}} + {2 a t^{-1/2} \over \sqrt{2\pi}}\int_0^{t} M^\lambda_r\,dr
+ {2a \over \sqrt{2\pi}} \int_0^t (t-s)^{-1/2}\, M^\lambda_s\,ds\\
&\le {4\lambda t^{-1/2} \over \sqrt{2\pi}} 
+ {4a \over \sqrt{2\pi}} \int_0^t (t-s)^{-1/2}\, M^\lambda_s\,ds\;.
\end{equs}
Writing $H^\lambda_t = t^{1/2} M^\lambda_{t}$, we thus obtain the bound
\begin{equ}[e:Hlambda]
H^\lambda_t \le {4\lambda\over \sqrt {2\pi}} + {4a \over \sqrt{2\pi}} t^{1/2} \int_0^t (t-s)^{-1/2}s^{-1/2}\, H^\lambda_s\,ds\;.
\end{equ}
We can now apply the fractional version of Gronwall's lemma  \cite[Lemma~7.6]{Nualart:2002p6052} (with $b = {4a \over \sqrt{2\pi}} $,
$a = {4\lambda \over \sqrt {2\pi}}$,
and $\alpha = {1\over 2}$), so that there exists a constant $C>0$ depending on $a$ but
independent of $\lambda$ such that
\begin{equ}
H^\lambda_t \le C\lambda \exp(Ct)\;.
\end{equ}
From this, we immediately deduce from \eref{e:relNG} and the definition of $H^\lambda$
a similar bound on $N^\lambda_{t,t}$.
Choosing $\lambda = K^{-1}e^{-Kt}$ for sufficiently large $t$ then allows to satisfy \eref{e:assumption}
and to obtain $N^\lambda_{t,t}\le 2$, thus completing the proof.
\end{proof}


As a corollary, we obtain a rather sharp bound on the modulus of continuity of 
the total workload process $\CW_t = \int_0^t N_s\,ds$. One has

\begin{proposition}\label{prop:bfWmodcont}
For every $T>0$, one has 
\begin{equ}[e:boundModW]
\sup_{t \le T}\lim_{h \to 0} {|\CW_{t+h} - \CW_t| \over h |\log h|} < \infty\;,
\end{equ}
almost surely.
\end{proposition}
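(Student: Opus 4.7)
The plan is to use the exponential moment bound on $N_t$ from Theorem~\ref{theo:Nt} to control the increments $\CW_{t+h}-\CW_t = \int_t^{t+h} N_s\,ds$ with sub-exponential tails of rate $\sim 1/h$, then pass from dyadic grids to general $h$ via Borel--Cantelli, exploiting the monotonicity $N_s \ge 0$.

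\textbf{Step 1 (Exponential bound on increments).} Fix $T>0$ and set $\lambda = \lambda_T > 0$, so that by Theorem~\ref{theo:Nt} one has $\E \exp(\lambda N_s) \le C$ uniformly in $s \in [0,T+1]$. For any $t \in [0,T]$ and $h \in (0,1]$, Jensen's inequality applied to $\exp$ yields
\begin{equ}
\E \exp\!\Bigl(\tfrac{\lambda}{h}(\CW_{t+h}-\CW_t)\Bigr) = \E \exp\!\Bigl(\tfrac{1}{h}\int_t^{t+h} \lambda N_s\,ds\Bigr) \le \tfrac{1}{h}\int_t^{t+h} \E e^{\lambda N_s}\,ds \le C\;.
\end{equ}
Chernoff's inequality then gives, for every $M > 0$,
\begin{equ}[e:cher]
\P\bigl(\CW_{t+h}-\CW_t \ge M\bigr) \le C\exp\!\bigl(-\lambda M/h\bigr)\;.
\end{equ}

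\textbf{Step 2 (Dyadic chaining plus Borel--Cantelli).} Choose $K > 3/\lambda$ and set $M_n = K\, 2^{-n}\, n \log 2$. Applying \eref{e:cher} with $h = 2^{-n}$ and $M = M_n$, then taking a union bound over the $\lceil T 2^n \rceil + 1$ dyadic points $t_k = k\, 2^{-n}$ with $t_k \le T$, yields
\begin{equ}
\P\Bigl(\exists\, k \le T 2^n\,:\, \CW_{t_{k+1}} - \CW_{t_k} \ge M_n\Bigr) \le C(T+1)\, 2^n\, 2^{-\lambda K n} \;.
\end{equ}
Since $\lambda K > 3$, the bound is summable in $n$, so by Borel--Cantelli there exists an almost surely finite random $n_0$ such that for every $n \ge n_0$ and every $k \le T 2^n$,
\begin{equ}
\CW_{(k+1)2^{-n}} - \CW_{k 2^{-n}} \;<\; K\, 2^{-n}\, n \log 2\;.
\end{equ}

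\textbf{Step 3 (Passing from grid to continuum).} Since $\CW$ is non-decreasing, for any $t \in [0,T]$ and any $h \in (0, 2^{-n_0-1})$, choose $n \ge n_0$ with $2^{-n-1} \le h < 2^{-n}$. Then $[t,t+h]$ is contained in the union of at most two consecutive dyadic intervals of length $2^{-n}$, so
\begin{equ}
\CW_{t+h} - \CW_t \;\le\; 2 K\, 2^{-n}\, n \log 2 \;\le\; 4 K\, h\, n \log 2 \;\le\; 4K\, h\, |\log h|\;,
\end{equ}
using $|\log h| \ge n \log 2$. This gives $\sup_{t \le T} \limsup_{h \to 0} \frac{|\CW_{t+h} - \CW_t|}{h\, |\log h|} \le 4K$ almost surely, proving \eref{e:boundModW}.

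The main obstacle is really bookkeeping: the dyadic union bound must beat the exponential decay of \eref{e:cher}, which forces the $|\log h|$ factor and fixes $K$ in terms of $\lambda_T$. Everything else is routine once the uniform exponential bound from Theorem~\ref{theo:Nt} is in hand.
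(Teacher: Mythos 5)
Your proof is correct, but it takes a genuinely different route from the paper. The paper's argument is pathwise: it applies a generalised Young inequality to write $N_t \le \frac{\eta}{\lambda}\bigl(e^{\lambda N_t} + (1+1/\eta)\log(1+1/\eta)\bigr)$, integrates in time, and then sets $\eta = h$, which yields the deterministic bound $|\CW_{t+h}-\CW_t| \le \frac{h}{\lambda}\int_0^{T+1} e^{\lambda N_s}\,ds + C_\lambda h|\log h|$ simultaneously for all $t \le T$ and $h \le 1$; the conclusion then follows because the single random variable $\int_0^{T+1} e^{\lambda N_s}\,ds$ has finite expectation by Theorem~\ref{theo:Nt} and is hence almost surely finite. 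You instead combine Jensen and Chernoff to get sub-exponential tails of rate $\lambda/h$ for the increments, run Borel--Cantelli over dyadic scales, and use monotonicity of $\CW$ to pass to arbitrary $t$ and $h$. Both approaches rest only on the uniform exponential moment bound; the paper's is shorter and gives a quantitative modulus valid for every $h\le 1$ at once (with a random but explicit constant), whereas yours gives a deterministic limsup constant $4K$ depending only on $\lambda$, at the price of an almost surely finite random scale $n_0$ and the chaining bookkeeping. Two trivial repairs to yours: since increments reach times up to $T+1$ and $\lambda_t$ is decreasing, you should take $\lambda = \lambda_{T+1}$ rather than $\lambda_T$ to invoke Theorem~\ref{theo:Nt} uniformly on $[0,T+1]$; and the union bound in Step~2 should run over $k \le (T+1)2^n$ (or similar) so that the two dyadic intervals covering $[t,t+h]$ for $t\le T$ are included. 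Neither affects the argument.
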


\begin{proof}
It follows from the generalised Young inequality that, for every $a,b \in \R_+$, and every $\lambda, \eta > 0$, one
has the inequality
\begin{equ}
ab \le {\eta \over \lambda} \bigl(e^{\lambda a} - 1 - \lambda a + (1+ b/\eta)\log(1+b/\eta) - b/\eta \bigr)\;,
\end{equ}
so that 
\begin{equ}
N_t \le {\eta \over \lambda} \bigl(e^{\lambda N_t} + (1+ 1/\eta)\log(1+1/\eta)\bigr)\;.
\end{equ}
It follows immediately that
\begin{equ}
|\CW_{t+h} - \CW_{t}| \le  {\eta \over \lambda} \int_t^{t+h} e^{\lambda N_s}\,ds + {h (1+ \eta)\over \lambda} \log(1+1/\eta)\;.
\end{equ}
Setting $\eta = h$, we obtain the bound
\begin{equ}
|\CW_{t+h} - \CW_t| \le  {h \over \lambda} \int_0^{T+1} e^{\lambda N_s}\,ds + C_\lambda h |\log h|\;,
\end{equ}
uniformly over all $h \le 1$ and all $t \in [0,T]$.
The claim now follows immediately from Theorem~\ref{theo:Nt}.
\end{proof}

Although the number of particles alive at any deterministic time has exponential moments, there
exists a \textit{dense} set of exceptional times for which $N_t = \infty$. For one, this follows
from the fact that, under $\Q$, $\e(w)$ is distributed proportionally to $s^{-3/2} ds$, so that
every particle creates an infinite number of offspring in every time interval. 

Actually, one has the even stronger statement that there is a dense set of exceptional times
at which the number of particles belonging to the \textit{first} generation of offspring is infinite.
Indeed, if we denote by $\MM$ a Poisson random measure on $\R_+^2$ with density
$c s^{-3/2}\,dr\,ds$ for a suitable constant $c$, then the number 
$N^1_t$ of particles in the first generation of offspring
is given by
\begin{equ}
N^1_t = \MM(A_t)\;,\qquad A_t = \{(r,s) \in [0,t] \times \R_+ \,:\, s \ge t-r\}\;.
\end{equ}
For $k \ge 0$ and $d \in \{1,\ldots 2^k\}$, we then set
\begin{equ}
A_{k,d} = [(d-1)2^{-k}, d2^{-k}] \times [4^{-k}, 4^{1-k}]\;,
\end{equ}
so that, by the scaling properties of $\MM$, 
the random variables $N_{k,d} = \MM(A_{k,d})$ form a sequence of i.i.d.\ Poisson random variables.
For any given point $(r,s)$, we set $B_{(r,s)} = [r, r+s]$, which is the set of times $t$ such that $(r,s) \in A_t$, and we set 
\begin{equ}
D_{(r,s)} = \{(k,d) \,:\, B_{(r',s')} \subset B_{(r,s)}\, \forall (r',s') \in A_{k,d}\}\;.
\end{equ}
Since the set $D_{(r,s)}$ is infinite for every $(r,s)$, we can then build a sequence $(r_n, s_n)$ recursively in the following way. Start with $(r_0, s_0) = (0,1)$ and then, given $(r_n, s_n)$ for some $n\ge 0$, define
$(k_n, d_n)$ as the first (in lexicographic order) element $(k,d) \in D_{(r_n, s_n)}$ such that
$N_{k,d} \ge 1$. We then set $(r_{n+1}, s_{n+1})$ to be one of the points of $\MM$ located in 
$A_{k_n,d_n}$. By construction, one then has $\cap_{n \ge 1} B_{(r_n,s_n)} = \{t\}$ 
for some $t \in [0,1]$, and  $\MM(A_t) \ge \sum_n \MM(A_{k_n, d_n}) = \infty$, as stated.
Of course, the interval $[0,1]$ in this procedure is arbitrary. If we want to show that there exists
an exceptional time within any deterministic time interval $[t_0, t_1]$, it suffices to start the algorithm
 we just described with $r_0 = t_0$ and $s_0 = t_1 - t_0$.

\section{The Brownian fan as a Markov process}

In this section, we slightly shift our perspective. We no longer consider the Brownian fan as a point
process of excursions, but we consider it as an evolving system of particles.
 Our system will therefore be described by a Markov process in some space 
of integer-valued measures
on a subset of $\R^2$ corresponding to the admissible combinations of ``position + tag''. The problem
is that, as we have seen in the previous section, there are exceptional times at which the limiting process 
consists of infinitely many particles. The first challenge is therefore to construct a space $\CX$
of integer-valued measures with
a sensible topology which can still accommodate these ``bad'' configurations in such a way 
that the limiting process is continuous both as a function of time and as a function of its 
initial configuration.

Once this space is defined, we show that the Brownian fan possesses the Feller property in $\CX$
(i.e.\ the corresponding Markov semigroup leaves the space of bounded continuous functions invariant). 
In fact, we will show that it preserves the space of Lipschitz continuous functions.
The continuity property of the discrete-time process established below in Proposition~\ref{prop:Kolmogorov} 
also implies the time continuity of
the Brownian fan (see Theorem~\ref{theo:finalConv} below). These properties  allow us to conclude
that the Brownian fan  $t \mapsto \mu_t$ is in fact a strong 
Markov process.

In Section~\ref{sec:noConv} below, we will furthermore compute its generator $A$
on a class of ``nice'' test functions. We will also present a ``negative'' result showing that if we denote by
$T_\eps$ the one-step Markov operator corresponding to the evolution of Algorithm~\ref{tdmc},
then one has $A \neq \lim_{\eps \to 0} \eps^{-1} \bigl(T_\eps - 1\bigr)$.
This is in stark contrast with, for example, Euler approximations to stochastic differential equations, where
such an equality would hold, at least when applied to sufficiently regular test functions.

\subsection{State space}
\label{sec:statespace}

Our construction is essentially the Wasserstein-$1$ analogue of the construction given in \cite{GigliFigalli}.
Let $\CM\subset \R^n$ be a convex open set with boundary $\d\CM$.
For $p \in (0,1]$, we then denote by $\ell^p(\CM)$ the set of all integer-valued measures
$\mu$ on $\CM$ such that 
\begin{equ}[e:normp]
\|\mu\|_p = \int_{\CM} d^p(y,\d \CM) \,\mu(dy) < \infty\;,
\end{equ}
where $d(y,\d \CM)$ denotes the (Euclidean) distance from $y$ to the boundary of $\CM$.
Note that since this quantity vanishes at the boundary, there are elements $\mu\in\ell^p(\CM)$ 
such that $\mu(\CM) = \infty$.

We endow $\ell^p(\CM)$ with a slight modification of the Wasserstein-$1$
metric by setting:
\begin{equ}[e:defDistp]
\|\mu-\nu\|_p = \sup_{f \in \Lip_p^0(\CM)} \Bigl(\int f(y)\mu(dy) - \int f(y)\nu(dy)\Bigr)\;,
\end{equ}
where we denoted by $\Lip_p^0(\CM)$ the set of all functions 
$f \colon \CM\to \R$ such that
\begin{equ}[e:boundLipf]
|f(x) - f(y)| \le  |x-y|^p\;,
\end{equ}
for all $x,y \in \CM$, and  $f(y) = 0$ for all $y \in \d \CM$.

\begin{remark}
Our notation is consistent in the sense that if we take for $\nu$ the null measure in \eref{e:defDistp},
then we precisely recover \eref{e:normp}. This can be seen by taking $f(x) = d^p(x,\d \CM)$,
which is optimal by \eref{e:boundLipf} and the triangle inequality.
\end{remark}

 If $\mu$ and $\nu$ happen to have the same (finite) mass, then the expression \eref{e:defDistp} does not change
when one adds a constant to $f$. In this case, we are thus reduced to the usual Wasserstein-$1$ distance between 
$\mu$ and $\nu$, but with respect to the modified distance function
\begin{equ}
d_p(x,y) = |x-y|^p  \wedge \bigl(d^p(x,\d\CM) + d^p(y,\d\CM)\bigr)\;.
\end{equ}
Note that the completion of $\CM$ under the distance function $d_p$ consists 
of $\CM \cup \{\Delta\}$, where $\Delta$ is a single ``point on the boundary''
such that $d_p(x,\Delta) = d^p(x,\d\CM)$ for every $x \in \CM$.

If one has $\mu(\CM) < \nu(\CM) < \infty$, then the distance $\|\cdot\|_p$ 
reduces to the Wasserstein-$1$ distance (again with respect to $d_p$) between
$\bar \mu$ and $\nu$, where $\bar\mu$ is obtained from $\mu$ by placing a mass $\nu(\CM) - \mu(\CM)$ on 
the boundary $\Delta$.
The following alternative characterisation of \eref{e:defDistp} in the case of purely atomic measures
is a version of the Monge-Kantorovich duality in this context:

\begin{lemma}
Consider a situation where $\mu = \sum_{i=1}^N \delta_{x_i}$ and $\nu = \sum_{i=1}^M \delta_{y_i}$. 
Then, 
\begin{equ}
\|\mu-\nu\|_p = \inf_{\sigma \in S_{N+M}} \sum_{i=1}^{N+M} d^p(x_i, y_{\sigma(i)})\;,
\end{equ}
where $S_{N+M}$ is the group of permutations of $N+M$ elements and we
set $x_j = \Delta$ for $j > N$ and $y_j = \Delta$ for $j>M$.
\end{lemma}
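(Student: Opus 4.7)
The plan is to reduce the claim to the standard Monge-Kantorovich duality on the auxiliary Polish metric space $(\CM \cup \{\Delta\}, d_p)$. For the upper bound I fix $\sigma \in S_{N+M}$ and $f \in \Lip_p^0(\CM)$, extend $f$ by $f(\Delta) = 0$, and note
\[
\int f\,d\mu - \int f\,d\nu = \sum_{i=1}^{N+M} \bigl(f(x_i) - f(y_{\sigma(i)})\bigr).
\]
The Lipschitz bound on $f$ together with $f|_{\partial\CM} \equiv 0$ (which, via taking an infimum of $|x-y|^p$ over boundary points $y$, also yields $|f(x)| \le d^p(x, \partial\CM)$) controls each summand by $d^p(x_i, y_{\sigma(i)})$ in all three cases (both ordinary, one $\Delta$, both $\Delta$), so that $\|\mu-\nu\|_p \le \inf_\sigma \sum_i d^p(x_i, y_{\sigma(i)})$.

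For the matching lower bound I introduce the equal-mass extensions $\tilde\mu = \mu + M\delta_\Delta$ and $\tilde\nu = \nu + N\delta_\Delta$, each of total mass $N+M$, regarded as finite positive measures on $(\CM\cup\{\Delta\}, d_p)$. The functions in $\Lip_p^0(\CM)$ extended by $0$ at $\Delta$ are precisely the $1$-Lipschitz functions on this space that vanish at $\Delta$; since $f(\Delta) = 0$ gives $\int f\,d\tilde\mu - \int f\,d\tilde\nu = \int f\,d\mu - \int f\,d\nu$ and since the equality of total masses makes additive constants in $f$ irrelevant, dropping the vanishing constraint does not change the supremum. The Kantorovich-Rubinstein duality then identifies $\|\mu-\nu\|_p$ with the Wasserstein-$1$ distance $W_1^{d_p}(\tilde\mu, \tilde\nu)$, and for purely atomic measures of equal total mass this reduces to the optimal assignment problem $\inf_{\sigma \in S_{N+M}} \sum_i d_p(x_i, y_{\sigma(i)})$.

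It remains to pass from $d_p$ to $d^p$ in this infimum, which is the only step requiring any real work. The inequality $\inf_\sigma \sum d_p \le \inf_\sigma \sum d^p$ is immediate from $d_p \le d^p$. For the reverse I start from a $d_p$-optimal $\sigma^*$ and perform swap moves: a direct count of the four possible pair types using the multiplicities $N, M, M, N$ shows that in any permutation the number of ordinary $(x_i, y_j)$-pairs equals the number of $(\Delta, \Delta)$-pairs. Whenever $\sigma^*$ contains an ordinary pair $(x, y)$ with $d_p(x, y) = d^p(x, \partial\CM) + d^p(y, \partial\CM) < |x-y|^p$, I swap it against an available $(\Delta, \Delta)$-pair to produce $(x, \Delta)$ and $(\Delta, y)$, which leaves the total $d_p$-cost unchanged but strictly reduces the number of ``bad'' ordinary pairs. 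Iterating yields an optimal $\sigma^{**}$ whose pairs all satisfy $d_p = d^p$, and so $\inf_\sigma \sum d^p \le \sum_i d^p(x_i, y_{\sigma^{**}(i)}) = \sum_i d_p(x_i, y_{\sigma^{**}(i)}) = \inf_\sigma \sum d_p$. The main (and essentially only) subtle point of the proof is this swap step, since $d_p$ and $d^p$ genuinely differ at the pointwise level and coincide only after global optimization; everything else is a routine invocation of Monge-Kantorovich with an added boundary point.
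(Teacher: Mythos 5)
Your proof is correct, but it is worth noting that the paper does not actually prove this lemma at all: its ``proof'' is a one-line citation to a standard optimal-transport reference. You have supplied the complete argument that the citation glosses over, and you have done so along the natural lines suggested by the surrounding text (which already remarks that, after equalising masses by placing extra mass on $\Delta$, the norm $\|\cdot\|_p$ reduces to a Wasserstein-$1$ distance for $d_p$). Concretely, your extension $\tilde\mu = \mu + M\delta_\Delta$, $\tilde\nu = \nu + N\delta_\Delta$, the observation that $\Lip_p^0(\CM)$ extended by $0$ at $\Delta$ coincides with the unit $d_p$-Lipschitz functions vanishing at $\Delta$ (and that constants drop out once the masses agree), and the Birkhoff--von Neumann reduction of the equal-mass transport problem to an assignment over $S_{N+M}$ are exactly the ingredients a careful reader must fill in; the only genuinely context-specific point is the passage from $d_p$ to $d^p$ inside the infimum, and your counting argument (in any matching the number of ordinary--ordinary pairs equals the number of $(\Delta,\Delta)$ pairs, so every ``bad'' ordinary pair with $d^p(x,\partial\CM)+d^p(y,\partial\CM) < |x-y|^p$ can be swapped against a $(\Delta,\Delta)$ pair at no change in $d_p$-cost) is correct and settles it cleanly. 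The only step you use silently is that $d_p$ satisfies the triangle inequality on $\CM\cup\{\Delta\}$, so that ``$1$-Lipschitz for $d_p$'' and Kantorovich--Rubinstein duality make sense; this holds because $x\mapsto d^p(x,\partial\CM)$ is $1$-Lipschitz for the metric $|x-y|^p$ (with $p\le 1$), and the paper takes it for granted in the same way, so it is a presentational remark rather than a gap.
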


\begin{proof}
See for example \cite{MR2369050}.
\end{proof}

This characterisation suggests the following ``interpolation'' procedure between elements in $\ell^p(\CM)$.
Let $\mu = \sum_{i=1}^N \delta_{x_i}$ and $\nu = \sum_{i=1}^N \delta_{y_i}$, where we assumed that both measures
charge the same number of points (this is something that we can always achieve by possibly adding points on $\Delta$).
assume furthermore that these points are ordered in such a way that 
\begin{equ}
\|\mu-\nu\|_p = \sum_{i=1}^{N} |x_i- y_i|^p\;.
\end{equ}
Again, this can always be enforced by suitably reordering the points and possibly adding points on the boundary.
We then define, for $t \in (0,1)$, the ``linear interpolation'' $L_t(\mu,\nu)$ by
\begin{equ}[e:linearInterp]
L_t(\mu,\nu) = \sum_{i=1}^N \delta_{z_i}\;,\qquad z_i = ty_i + (1-t)x_i\;.
\end{equ}
Note that this procedure is not necessarily unique, but it is easy to resolve this ambiguity by optimising over the possible
pairings $\{(x_i, y_i)\}$ realising the above construction, according to some arbitrary criteria.

In any case, one can check that this construction has the property that
\begin{equ}[e:boundInter]
\|L_s(\mu,\nu)-L_t(\mu,\nu)\|_p \le |t-s|^p \|\mu-\nu\|_p\;,
\end{equ}
for any $s,t \in [0,1]$, which will be a useful fact in the sequel.

\subsection{Definition of the process}
\label{sec:defFanMP}

For the remainder of this section, we set
\begin{equ}
\CM = \{(x,v) \in \R^2\,:\, v > -ax\}\;,
\end{equ}
which is the natural configuration space for our process.  We will use capital letters to distinguish points in $\CM$ from points in $\R$.
By Theorem~\ref{theo:Nt}, we already know that, for any fixed time $t$, the Brownian fan almost surely has 
only finitely many particles alive at time $t$. Define now the evaluation map 
$E_t \colon \CE \to \CM \cup \{\Delta\}$ by
\begin{equ}
E_t(w) = 
\left\{\begin{array}{cl}
	\bigl(w_t, -a w_{\s(w)}\bigr) & \text{if $t \in \l(w)$,} \\
	\Delta & \text{otherwise.}
\end{array}\right.
\end{equ}
For a given ``initial condition'' $(x,v) \in \CM$, we then set
\begin{equ}
\mu_t = E_t^\star \mu^{[\infty]}_w \;,
\end{equ}
which is an $\ell^p(\CM)$-valued random variable.
Here, $w$ is a realisation of a Brownian motion starting at $x$ and killed when it hits $-v/a$,
and $\mu^{[\infty]}_w$ is the corresponding realisation of the Brownian fan. (Just so that $E_t$ 
has the correct effect
on $w$, one can for example set $\s(w) = -1$ and make sure that $w(-1) = -v/a$.)
As a consequence of Theorem~\ref{theo:Nt} and of our definition of the Brownian fan, 
we then indeed have $\mu_t \in \ell^p(\CM)$ for every $p \le 1$. 

Note at this stage that we can simply discard the, typically infinite, mass on $\Delta$ by identifying measures
that only differ on $\Delta$. As already mentioned earlier, this is consistent with the identification 
$\Delta \sim \d \CM$ already made
in the interpretation of the construction of $\ell^p(\CM)$.

This construction can be extended to any initial condition in $\ell^p(\CM)$ with finite total mass, by considering independent
Brownian fans for each particle.
As a consequence
of the Markov property of the Brownian excursion and the independence properties of Poisson
point processes, it is then straightforward to verify that $t \mapsto \mu_t$ is indeed a Markov process.

Actually, by Proposition~\ref{prop:numPart}, we know
that for any fixed collection of deterministic times $\{t_1,\ldots, t_k\}$, one has $\mu_{t_k}(\CM) < \infty$ almost surely, 
so that our construction determines a probability measure on $\bigl(\ell^p(\CM)\bigr)^{\R_+}$ by Kolmogorov's extension theorem.
At this stage however, we know absolutely nothing about the continuity properties of this process, and this
is the subject of the remainder of this section.

\subsection{Feller property}

We now show that the Brownian fan constructed in Sections~\ref{sec:BFan} and \ref{sec:defFanMP}
has the Feller property in $\ell^p(\CM)$ for every $p \le 1$.

As in \cite{GigliFigalli}, we could have defined spaces $\ell^p(\CM)$ in a natural way for $p > 1$.
However, the Feller property would fail in this case because of the following simple heuristic argument. 
For any $p$, we can change the initial condition by an amount less than $\delta$ in $\ell^p$ by
creating $N$ particles at distance $\eps = (\delta/N)^{1/p}$ from the boundary of $\CM$.
For $\eps$ small,  the probability that any such particle survives up to 
time $1$ (say) is bounded from below by $c\eps$ for some $c>0$. On average, the number of survivors 
will thus be on the order of $\eps N \sim \delta^{1/p} N^{(p-1)/p}$.
Furthermore, at time $1/2$, each of these surviving particles will be at a distance of order $1$ of the 
boundary of $\CM$. As a consequence, by increasing $N$ but keeping $\delta$ fixed (or even
sending $\delta$ to $0$ sufficiently slowly), the law of the process at time $1$ with an initial condition
arbitrarily close to $0$ can be at arbitrarily large distance of $0$, so that the Feller property fails.

For $p \le 1$ on the other hand, we have

\begin{proposition}\label{prop:feller}
For any $p \le 1$, the Brownian fan gives rise to a Feller process in $\ell^p(\CM)$. 
Even more, the corresponding Markov semigroup preserves the space of bounded 
Lipschitz continuous functions. 
\end{proposition}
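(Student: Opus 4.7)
The plan is to establish a Wasserstein-type contraction estimate for the semigroup $P_t$ via a carefully constructed coupling, which immediately yields both the Feller property and the stronger statement that $P_t$ preserves bounded Lipschitz functions. Concretely, for any Lipschitz continuous $\Phi \colon \ell^p(\CM) \to \R$ with Lipschitz constant $L$,
\[
|P_t\Phi(\mu_0) - P_t\Phi(\nu_0)| = \bigl|\E[\Phi(\mu_t) - \Phi(\nu_t)]\bigr| \le L\,\E\|\mu_t - \nu_t\|_p
\]
as soon as $(\mu_t,\nu_t)$ live on a common probability space with the correct marginals. Hence everything reduces to exhibiting a coupling for which $\E\|\mu_t - \nu_t\|_p \le C(t)\|\mu_0 - \nu_0\|_p$ with $C(t)$ locally bounded in $t$. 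The ordinary Feller property (continuity on $C_b$) then follows from this bound together with the approximation of bounded continuous functions by bounded Lipschitz ones on balls in $\ell^p(\CM)$, combined with the uniform $\ell^p$-integrability supplied by Theorem~\ref{theo:Nt}.

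\textbf{Construction of the coupling.} Using the Monge--Kantorovich representation from Section~\ref{sec:statespace}, write $\mu_0 = \sum_i \delta_{X_i}$ and $\nu_0 = \sum_i \delta_{Y_i}$ with an optimal pairing (padding with $\Delta$ if needed), so that $\|\mu_0 - \nu_0\|_p = \sum_i d_p(X_i, Y_i)$. Because Brownian fans based at independent roots superpose to give the full fan, it suffices to construct a coupling pair by pair and then take the (independent across $i$) union. For each pair $(X_i, Y_i)=((x,v),(x',v'))$ I would split into two regimes according to which of the two terms in $d_p(X_i, Y_i) = |X_i-Y_i|^p \wedge (d^p(X_i,\d\CM) + d^p(Y_i,\d\CM))$ is smaller. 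In the \emph{bulk} case $|X_i-Y_i|^p \le d^p(X_i,\d\CM) + d^p(Y_i,\d\CM)$, use a synchronous coupling: both roots are driven by the same Brownian motion (translated by $Y_i - X_i$) and share the same Poisson point process of spawn times and excursion shapes, which propagates recursively to all descendants. In the \emph{boundary} case, use an independent coupling of the two sub-fans, since their mass is already very small.

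\textbf{Control under the synchronous coupling.} While both roots are alive, every descendant of the first fan has a twin in the second fan at the fixed offset $Y_i - X_i$, so the contribution to $\|\mu_t - \nu_t\|_p$ is bounded by $N_t^{(i)}\cdot|X_i - Y_i|^p$, where $N_t^{(i)}$ is the number of alive descendants of the pair; this has finite expectation by Theorem~\ref{theo:Nt}. The first root to die hits its barrier at some time $\tau_{\min}$, and at that moment the surviving root is exactly at distance $O(|X_i - Y_i|)$ from its own barrier, because the two barriers are separated by $|(v-v')/a + (x-x')| \le C|X_i - Y_i|$. The unpaired mass contributed by the surviving sub-fan between $\tau_{\min}$ and $t$ must then be bounded by $O(|X_i-Y_i|^p)$ in expectation, for which I would rely on an auxiliary single-root estimate
\[
\E\|\mu_t^{(x,v)}\|_p \le C(t)\,d^p\bigl((x,v),\d\CM\bigr),
\]
proved by combining the $O(d/\sqrt{t})$ survival probability of the ancestor (reflection principle for Brownian motion against the linear barrier $\{v=-ax\}$) with Theorem~\ref{theo:Nt} applied after conditioning on survival, and by using $p\le 1$ to move between $d$ and $d^p$. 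In the boundary case, this same single-root estimate immediately controls $\E\|\mu_t^{X_i}\|_p + \E\|\mu_t^{Y_i}\|_p$ by $C(t)(d^p(X_i,\d\CM) + d^p(Y_i,\d\CM)) = C(t)\,d_p(X_i,Y_i)$. Summing over $i$ yields the required contraction.

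\textbf{Main obstacle.} The delicate step is the analysis of the synchronous coupling after $\tau_{\min}$, where one must show that the \emph{difference} of the two fans — not merely the total mass of the surviving one — is controlled by $|X_i - Y_i|^p$. This is precisely where the specific form of the modified distance $d_p$, and the restriction $p\le 1$ is essential: under the natural parametrisation ``position + tag'' on $\CM$, offspring created just before $\tau_{\min}$ sit at distance $O(|X_i-Y_i|)$ from $\d\CM$ and contribute $O(|X_i-Y_i|^p)$ each, which only remains summable in expectation because Theorem~\ref{theo:Nt} gives sub-exponential tails on the number of particles. For $p>1$ the same mechanism produces the blow-up described in the heuristic at the beginning of Section~\ref{sec:statespace}, so this sharp interplay between the power $p$ and the single-root estimate is the crux of the proof.
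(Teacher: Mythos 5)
Your proposal follows essentially the same route as the paper's proof: reduce to a single pair of initial roots via the optimal pairing and superposition, couple the two fans by driving both roots with the same Brownian motion and the same Poisson spawning mechanism, bound the twinned mass by $\delta^p$ times the expected number of particles alive (finite by Theorem~\ref{theo:Nt}), and bound the unpaired mass created after the first death by $O(\delta^p)$, exploiting that the surviving root is then within $O(\delta)$ of its own barrier.

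Two organisational differences and one caveat. The paper does not split into bulk and boundary cases; it works directly with the Euclidean separation $\delta$ of the paired roots, and pairs matched through $\Delta$ are implicitly handled by the same near-boundary bound, so your bookkeeping is if anything slightly more explicit. For the unpaired mass the paper does not restart a fan at $\tau_{\min}$: it bounds the rate at which $d_p$-weighted mass is created per unit of ancestor lifetime, $\E \int_\CM d_p(Y,\d\CM)\,\MM_t(ds\times dY) \le C\,ds$, multiplies by $\E|\bar\tau\wedge t - \tau\wedge t|\le C\delta$, and treats the surviving ancestor itself by Jensen and optional stopping; your single-root estimate $\E\|\mu_t^{(x,v)}\|_p \le C(t)\,d^p$ is equivalent in content. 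The caveat is your sketched proof of that estimate: writing it as the ancestor's survival probability $O(d/\sqrt t)$ times a conditional application of Theorem~\ref{theo:Nt} is not correct as stated, because the fan generically carries mass at time $t$ after its root has died (offspring spawned before $\tau$ outlive the root), so conditioning on the root's survival does not account for all of $\E\|\mu_t\|_p$. The clean argument is the paper's: the weighted-mass creation rate is $O(1)$ per unit of root lifetime and $\E(\tau\wedge t)\le C d$, giving $O(d)\le O(d^p)$ for $d\le 1$ (equivalently, one should condition on non-extinction of the whole sub-fan rather than on survival of its root). This does not alter your conclusion, but it is where the actual work in the unpaired-mass bound sits.
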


\begin{proof}
For any two initial conditions $\mu_0$ and $\bar \mu_0$, write as before
\begin{equ}
\mu_0 = \sum_{j=1}^N \delta_{X_0^{(j)}}\;,\qquad \bar \mu_0 = \sum_{j=1}^N \delta_{\bar X_0^{(j)}}\;,
\end{equ} 
with the $X_0^{(j)}\in\CM$ and $\bar X_0^{(j)}\in \CM$ chosen in such a way that
\begin{equ}[e:equaldp]
\|\mu_0- \bar \mu_0\|_p = \sum_{j=1}^N d_p(X_0^{(j)},\bar X_0^{(j)})\;.
\end{equ}
Our aim now is to show that there exists a constant $C$ such that
\begin{equ}
\E \|\mu_t- \bar \mu_t\|_p \le C \|\mu_0-\bar \mu_0\|_p\;,
\end{equ}
independently of $t \le 1$, where the pair $(\mu_t,\bar \mu_t)$ is a particular coupling between the Brownian fans
starting from $\mu_0$ and $\bar \mu_0$ respectively.
Denote by $\mu_t^{(j)}$ the contribution to $\mu_t$ originating from the initial particle $X_0^{(j)}$ and
similarly for $\bar \mu_t^{(j)}$. Then, by the triangle inequality,
one obtains the bound
\begin{equ}
\E \|\mu_t- \bar \mu_t\|_p \le \sum_{j \ge 1} \E \|\mu_t^{(j)}- \bar \mu_t^{(j)}\|_p \;,
\end{equ}
so that the claim follows from \eref{e:equaldp} if we can show that
\begin{equ}
\E \|\mu_t^{(j)}- \bar \mu_t^{(j)}\|_p \le C d_p(X_0^{(j)}, \bar X_0^{(j)})\;.
\end{equ}
In other words, it suffices to consider the special case when both $\mu_0$ and $\bar \mu_0$ consist of
one single particle, which we denote by $X_0 = (x_0, v_0)$ and $\bar X_0 = (\bar x_0, \bar v_0)$ respectively.

One then constructs a coupling between the two processes $\mu_t$ and $\bar \mu_t$ by running both particles
with the same Brownian motion and spawning children according to the same Poisson process (as long as the corresponding particle is alive). 
We denote by $X_t$ and $\bar X_t$ the evolutions of the two initial particles in $\CM$, driven by the
same realisation of a Brownian motion, and stopped when they reach $\d\CM$. 
We can assume without loss of generality that $v_0 + ax_0 < \bar v_0 + a\bar x_0$, so that the 
particle $X$ dies before the particle $\bar X$. Denoting by $\tau$ and $\bar \tau$ the respective lifetimes
of these particles, one thus has $\tau \le \bar\tau$.

Denote now by $\MM_t$ the (random) measure on $[0,t]\times \CM$ which is such that,
for $I \subset [0,t]$ and $A \subset \CM$, $\MM_t(I\times A)$ is the number of particles
in $A$ at time $t$ that are offspring of a particle created from the ``ancestor particle'' $\bar X$ at some time
$s\in I$. With this notation, if we denote by $\Xi\colon \CM \to \CM$ the map 
\begin{equ}
\Xi(x,v) = \bigl(x+ x_0 - \bar x_0, v - a ( x_0 - \bar x_0)\bigr)\;,
\end{equ}
then one has the decompositions
\begin{equs}
\mu_t(A) &= \one_{\tau \ge t} \delta_{X_t}(A) + \Xi^\star \MM_t([0,\tau\wedge t] \times A)\;,\\
\bar \mu_t(A) &= \one_{\bar \tau \ge t} \delta_{\bar X_t}(A) + \MM_t([0,\bar\tau\wedge t] \times A)\;.
\end{equs}
Denote now by $\delta$ the \textit{Euclidean} distance between the two initial particles,
so that their $\ell^p$-distance is $\delta^p$. 
It then follows immediately from the above decomposition that one has the bound
\begin{equs}
\|\mu_t - \bar \mu_t\|_p &\le \delta^p + \one_{t \in [\tau,\bar \tau]}d_p(\bar X_t,\d \CM) +  \delta^p \MM_t([0,\tau\wedge t] \times \CM) \\
&\quad + \int_\CM d_p(Y,\d \CM)\MM_t([\tau \wedge t,\bar \tau\wedge t] \times dY)\;.
\end{equs}
Since $d(\bar X(\tau),\d \CM) = \delta$ by the definition of $\tau$ and $\delta$, it follows 
from Jensen's inequality and the Martingale
property of (stopped) Brownian motion 
that one has the bound $\E \one_{t \in [\tau,\bar \tau]}d_p(\bar X_t,\d \CM) \le \delta^p$. It 
also follows from Theorem~\ref{theo:Nt} that $\E \MM_t([0,\tau\wedge t] \times \CM) < \infty$,
independently of $\delta$. Finally, it follows from an argument very similar to the proof of
Theorem~\ref{theo:Nt} that 
\begin{equ}
\E \int_\CM d_p(Y,\d \CM)\MM_t(ds \times dY)
\le C\, ds\;,
\end{equ}
uniformly over $s \in [0,t]$. It follows that
\begin{equ}
\E \int_\CM d_p(Y,\d \CM)\MM_t([\tau \wedge t,\bar \tau\wedge t] \times dY)
\le C \E |\bar \tau\wedge t - \tau \wedge t| \le C \delta\;,
\end{equ}
where we used the fact that if $\tau_\delta$ is the first hitting time of $0$ by a Brownian motion
starting at $\delta$, then $\E (\tau_\delta \wedge 1) \le C\delta$.
Combining these bounds completes the proof.
\end{proof}

\subsection{Lack of convergence of the generators}
\label{sec:noConv}

One standard method to prove convergence of a sequence of Markov processes to a limiting
process, once tightness has been established, is to show that the corresponding generators converge
in a suitable sense. In our situation, one actually does  \textit{not} expect the generator
of the approximate process to converge to that of the limiting process, when testing it
on ``nice'' test functions. We first argue at a technical level why this is the case, before
providing an intuitive explanation.

Inspired by \cite{EthKur86MP,Dawson}, we consider test functions of the form
\begin{equ}[e:shapeTF]
F(\mu) = \exp \bigl(\scal{\log f, \mu}\bigr)\;,
\end{equ}
where $f \colon \CM \to \R_+$ is a sufficiently smooth function such that $f(x,v) = 1$ for $(x,v) \in \d \CM$.
This boundary condition is required since elements $\mu \in \ell^p(\CM)$ can have infinite
mass (and, as we have already seen, the limiting process really does acquire infinite mass at some
exceptional times) accumulating near $\d \CM$. Being in $\ell^p(\CM)$ for $p \le 1$ does however
ensure that smooth functions such as above are integrable.

Exploiting the independence structure of the process as well as its space homogeneity, 
we can reduce ourselves to the case of an initial condition of the form
$\mu_0 = \delta_{(x,v)}$ for some $v < -ax$. In this case, for sufficiently small $\eps > 0$,
the probability that the original particle dies within the time interval $\eps$ is of the order $\eps^p$ for any $p>0$.
We therefore only need to take into account the possibility of creating some descendant(s), with the
killing mechanism being taken care of by the boundary condition of $f$. While the
average number of ``second generation'' descendants is of order $\eps$, any such descendant will typically
have travelled to a distance of order $\sqrt \eps$ from $\d\CM$, 
so that only the first generation has a chance of contributing to the generator.

We then have
\begin{equ}
{1\over \eps} \bigl(\E F(\mu_\eps) - F(\mu_0)\bigr) \approx A_0 f(x,v) + {f(x,v) \over \eps} \E \scal{f-1,\MM_\eps}\;,
\end{equ}
where
\begin{equ}
A_0 = {1\over 2}\d_x^2\;,
\end{equ}
is the generator of Brownian motion, and where
$\MM_\eps$ is the (projection to time $\eps$ of the) Poisson point process yielding the first generation of offsprings.
Note now that since these offspring will be created near $\d \CM$ and since $f=1$ there, we can 
further approximate this expression by
\begin{equ}
{1\over \eps} \bigl(\E F(\mu_\eps) - F(\mu_0)\bigr) \approx A_0 f(x,v) + {f(x,v) \over \eps} f'(x)\, \E \int_\CM (\bar x + a^{-1}\bar v) \MM_\eps(d\bar x,d\bar v)\;.
\end{equ}
Here, we wrote $f'(x)$ as a shortcut for $\d_x f(x,v)\big|_{v = -ax}$.
Denoting by $e_s$ the position, relative from its starting point, 
of an excursion of length $s$ and making use of the formula \eref{e:defHairy} for the intensity 
measure of $\MM_\eps$, we obtain for the last term in this equation the expression
\begin{equ}[e:Gen]
{1\over \eps} \E \int_\CM (\bar x + a^{-1}\bar v) \MM_\eps(d\bar x,d\bar v) = {a\over 2\sqrt{2\pi} \eps} \int_0^\infty \int_0^{s\wedge \eps} \E e_s(t)\,dt\, s^{-3/2}\,ds \;.
\end{equ}
At this stage we note that, for a  Brownian excursion of length $s$, we have for $t \le s$ the identity
\begin{equ}
\E e_s(t) =  \sqrt{{8 t \over \pi s} (s-t)}
\end{equ}
which can be computed using the explicit formula given for the transition probabilities of the Brownian excursion on page 59 of \cite{BMHandbook}.
Inserting this into \eref{e:Gen}, a tedious but straightforward calculation then yields
\begin{equ}
\lim_{\eps \to 0} {1\over \eps} \E \int_\CM (\bar x + a^{-1}\bar v) \MM_\eps(d\bar x,d\bar v) = {a\over 2}\;,
\end{equ}
so that we finally obtain for the generator $A$ the expression
\begin{equ}[e:genBF]
A F(\mu_0) =  A_0 f(x,v) + {a \over 2} f(x,v) f'(x) \;.
\end{equ}
Recall that this is for the particular case where $\mu_0 = \delta_{(x,v)}$. In the general case, we can
use the independence structure of the process to obtain
\begin{equ}
A F(\mu_0) =  F(\mu_0) \int_{\CM} \Bigl({A_0 f(x,v) \over f(x,v)} +  {a \over 2} f'(x) \Bigr)\,\mu_0(dx,dv)\;.
\end{equ}

\begin{remark}
Compare this with the generator of a usual branching diffusion, where the term $a f'(x)/2$ would be replaced
by $a(f(x) - 1)$, with $a$ the branching rate.
\end{remark}

On the other hand, if we denote by $T_\eps$ the Markov operator describing one step 
of Algorithm~\ref{tdmc}, we might expect that one also obtains $A$ as the limit $\eps^{-1} \bigl(T_\eps - 1\bigr)$
as $\eps \to 0$. This would indeed be the case if there was no branching or if branching did only
occur at a finite rate. Considering again initial conditions of the form $\mu_0 = \delta_{(x,v)}$
for some $v < -ax$. By reasoning similar to that in the beginning of Section~\ref{sec:contLimit}, we obtain 
\begin{equ}[e:finalWrong]
\eps^{-1} \bigl(T_\eps F - F\bigr)(\mu_0) \approx A_0 f(x,v) + {a \over 2} f(x,v) f'(x) \int_0^\infty y^2\,\nu(dy) \;,
\end{equ}
which is always \textit{different} from \eref{e:genBF}, and is actually what we would have obtained from
the wrong guess \eref{e:wrongGuess}.

A possible  reason for this discrepancy is that, while the Markov semigroup of the Brownian fan does
indeed preserve test functions of the type \eref{e:shapeTF}, we do not expect this to be true of the
Markov operator $T_\eps$. Instead, the ``correct'' space of test functions for $T_\eps$ is 
of the same type, but the function $f$ should have a ``boundary layer'' near $\d\CM$.

\begin{remark}
Another reason why the generator of the Brownian fan is not such a useful object is that many 
seemingly innocent observables, like for example the total number $\CN$ of particles, do \textit{not} belong 
to its domain. This follows from the fact that if we consider again a simple initial 
condition $\mu_0$ as above, then $\E \CN(\mu_\eps) - 1 \approx \sqrt \eps$ for small $\eps$. The reason
 the total number of particles nevertheless remains finite (at least for fixed times) is that there are
exceptional states where one or more particles are very near $\d\CM$ and for which
$\E \CN(\mu_\eps) - 1 \approx -\CO(1)$. 
\end{remark}


The remainder of the article is devoted to providing a rigorous proof of the fact that, in the situation of the
previous two sections, the process given by Algorithm~\ref{tdmc}
converges to the Brownian fan in $\CC([0,T], \ell^p(\CM))$ for any $T>0$ and $p \le 1$.
The overall strategy of the proof is classical: we first prove a tightness result in Section~\ref{sec:tightness}
and then show that finite-dimensional marginals converge to those of the Brownian fan in Section~\ref{sec:convFan}. 

Difficulties arise on two fronts. First, to prove the tightness result, it is convenient to have
uniform moment bounds on the number of particles at fixed time for the approximating system.
These turn out to be much more difficult to obtain for the approximating system than for the 
Brownian fan, which is mainly due to a lack of uniform exponential bounds. 
A second difficulty arises in the proof that finite-dimensional distributions converge to those of the 
Brownian fan. While it is intuitively clear that those excursions that survive for times of
order $\CO(1)$ do converge to suitably normalised Brownian excursions, this result is rather technical
and, surprisingly, does not seem to appear in the literature.
Furthermore, no convergence result holds for the typical excursions which die very early. 
We therefore also need to argue that, both at the level
of Algorithm~\ref{tdmc} and at the level of the Brownian fan, these small excursions do not matter 
in the limit.

\section{Tightness}
\label{sec:tightness}

As in the previous section, we restrict ourselves to the particular case when the underlying Markov process is
given by a rescaled random walk, namely
\begin{equ}[e:RW]
y_{(k+1)\eps} = y_{k\eps}  + \sqrt \eps \xi_{k+1}\;,
\end{equ}
where the $\xi_k$ are i.i.d.\ random variables with distribution $\nu$ having some exponential moments, 
and where the potential $V$ is given by a linear function, $V(x) = -ax$. Our aim
is to show that as $\eps \to 0$, the sequence of birth and death processes obtained by running
Algorithm~\ref{tdmc} is tight in a state space $\CX$, which we will now describe.

\subsection{Formulation of the tightness result}

With the construction of the previous section in mind, we choose as our state space
 $\CX = \ell^p(\CM)$, where  $\CM = \{(x,v,n) \in \R^2 \times \N \,:\, v > -ax\}$, 
and $p \le 1$ is arbitrary. Here, the coordinate $n$ is used to keep track of the generation of a particle:
direct offspring of a particle from the $n$th generation belong to the $(n+1)$st generation.
We extend the Euclidean distance to $\R^2 \times \N$ by additionally 
postulating that the distance between particles
belonging to different generations is given by the sums of the distances of the two particles to $\d\CM$. 
The boundary $\d \CM$ is given as before by $\d \CM = \{(x,v,n)\,:\, v = -ax\}$.  We will use capital letters for elements of $\CM$ to differentiate them from elements of $\R$.

In order to formulate our result, we will make use of the following notation. For $t = k\eps$ with $k$ an
integer, we denote by $\mu_t^\eps$ the empirical measure of the particles alive at time $t$, and by $N_t$
the number of such particles. Sometimes, it will be convenient to consider the
particles instead as a collection of elements  $X_t^{(j)}\in\CM$, so that we write
\begin{equ}
\mu_t^\eps = \sum_{j=1}^{N_t} \delta_{X^{(j)}_t}\;.
\end{equ}
We do not specify how exactly we order the particles, as this is completely irrelevant for our purpose.
For $t \in (k\eps, (k+1)\eps)$, we define $\mu_t^\eps$ by using the ``linear interpolation'' procedure \eref{e:linearInterp}, 
setting
\begin{equ}
\mu_t^\eps = L_s(\mu_{k\eps}^\eps, \mu_{(k+1)\eps}^\eps)\;,\qquad s = \eps^{-1}t - k\;.
\end{equ}
The interpolation procedure $L_s$ is a very minor modification from the one described above, in the sense
that we only connect particles belonging to the same generation. 
In this way, the process $t \mapsto \mu_t^\eps$ has continuous trajectories for every $\eps$.
The main result of this section is as follows:

\begin{theorem}\label{theo:tight}
Let $p \le 1$ and denote by $\CL^\eps$ the law of the process $t \mapsto \mu_t^\eps$ 
described above, viewed as a family of probability measures on $\CC([0,1],\CX)$.
Assume furthermore that there exists $c> 0$ such that $\int e^{c|y|}\nu(dy) < \infty$. 

Then, for any single particle initial condition $\mu_0^\eps = \delta_{X_0}$ with $X_0 \in \CM$,
there exists $\eps_0>0$ such that the family $\{\CL^\eps\}_{\eps \le \eps_0}$ is tight.
\end{theorem}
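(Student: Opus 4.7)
The plan is to verify tightness of $\{\CL^\eps\}$ in $\CC([0,1],\ell^p(\CM))$ via the standard two-step criterion: tightness of the time-marginals $\{\mu_t^\eps\}$ at every fixed $t$, together with a uniform-in-$\eps$ modulus-of-continuity estimate on $t\mapsto\mu_t^\eps$. The principal inputs will be the uniform moment bounds on the particle count $N_t$ provided by Proposition~\ref{prop:numPart}, and the Kolmogorov-type continuity estimate announced in Corollary~\ref{cor:Kolmogorov}.

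First I would characterise relative compactness in $\ell^p(\CM)$: a set $K\subset\ell^p(\CM)$ is relatively compact iff it is uniformly bounded in $\|\cdot\|_p$, and uniformly satisfies $\int_{\{d(y,\d\CM)<\delta\}}d^p(y,\d\CM)\,\mu(dy)\to 0$ as $\delta\to 0$, together with the corresponding tail condition at infinity. For marginal tightness of $\mu_t^\eps$, I would verify each of these: the $\ell^p$-norm is controlled by $\sum_j(v_j+ax_j)^p/(1+a^2)^{p/2}$, and along each ancestry line the scalar quantity $v+ax$ is a (stopped) supermartingale for the killed random walk \eref{e:RW}, so combined with the bound on $\E N_t^q$ and the exponential integrability of $\nu$ this yields $\E\|\mu_t^\eps\|_p^q<\infty$ uniformly in $\eps$. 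The ``no concentration near $\d\CM$'' condition and the ``no mass at infinity'' condition are handled analogously, splitting on the set of particles whose distance to $\d\CM$ (respectively to the origin) lies in a given range and using the $N_t$-moment bounds and the exponential moments on $\nu$ to bound the contributions.

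For the time regularity, the bound \eref{e:boundInter} on the linear interpolation reduces the problem to controlling $\|\mu_{t+h}^\eps-\mu_t^\eps\|_p$ for $h$ a multiple of $\eps$. I would bound this via an explicit pairing built from the genealogical structure: particles alive at both endpoints are matched to themselves, contributing $|X_{t+h}-X_t|^p\lesssim h^{p/2}|\tilde\xi|^p$ by the random-walk scaling; particles that die in $(t,t+h]$ are matched with $\Delta$, contributing at most $d^p(X_t,\d\CM)$, which is small in expectation since a killed walk hits its barrier with overshoot of the order of one step; and particles born in $(t,t+h]$ are matched with $\Delta$, contributing at most the distance from $\d\CM$ reached by a ``short'' excursion of the driving walk. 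Combining these contributions with the uniform moment bounds on $N_s$ for $s\in[t,t+h]$ gives an estimate of the form $\E\|\mu_{t+h}^\eps-\mu_t^\eps\|_p^q\le Ch^\alpha$ for some $\alpha>1$ independent of $\eps$, from which a uniform modulus-of-continuity control follows by Kolmogorov's continuity criterion (this is essentially the content of Corollary~\ref{cor:Kolmogorov}).

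The main obstacle is the control of newborn particles. As in the Brownian fan, each particle spawns offspring at a rate that diverges like $\eps^{-1/2}$ as $\eps\to 0$: at every microscopic step the chance of creating a descendant is $O(\sqrt\eps)$, so the cumulative rate per macroscopic unit of time is unbounded. What saves the argument is the choice $p\le 1$: by \eref{e:lawv} a newborn particle sits at Euclidean distance $O(\sqrt\eps)$ from $\d\CM$, contributing $O(\eps^{p/2})$ to $\|\cdot\|_p$, and its subsequent distance to $\d\CM$ is dominated (modulo survival of the descendant for a time of order $1$, itself of probability $O(\sqrt\eps)$) by a Brownian-type excursion. The delicate balance between the $\eps^{-1/2}$ birth rate, the $\sqrt\eps$ initial distance to the boundary, and the $O(\sqrt\eps)$ survival probability is precisely what the $p\le 1$ choice is tailored for, and will be the technical heart of the argument; the Proposition~\ref{prop:numPart} moment bound on $N_t$ will ensure that the sum of these contributions over all live ancestor particles remains finite uniformly in $\eps$.
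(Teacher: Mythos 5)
Your overall architecture coincides with the paper's: tightness is reduced to (i) a compact containment condition for the fixed-time marginals, uniform over $t\in[0,1]$ and $\eps$, and (ii) a uniform Kolmogorov-type moment bound on increments, both fed by the particle-number bounds of Proposition~\ref{prop:numPart} and by population-weighted displacement moments. Your increment estimate (reduction to multiples of $\eps$ via \eref{e:boundInter}, survivors matched to themselves, dying and newborn particles matched to the boundary, then Kolmogorov with $q$ large so that the exponent exceeds $1$) is essentially Proposition~\ref{prop:Kolmogorov} and Corollary~\ref{cor:Kolmogorov}.

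There is, however, a genuine gap in your compact containment step: you ignore the generation coordinate. For this theorem the state space is $\ell^p(\CM)$ with $\CM=\{(x,v,n)\in\R^2\times\N\,:\,v>-ax\}$, and the metric is extended so that two particles in \emph{different} generations are at distance equal to the sum of their distances to $\d\CM$. Your proposed characterisation of relative compactness (bounded $\|\cdot\|_p$, uniformly small contribution near $\d\CM$, spatial tail condition) is therefore false in this space: the configurations $\delta_{(x,v,n)}$, $n=1,2,\dots$, with $(x,v)$ fixed in the interior, satisfy all three conditions yet are mutually at distance bounded below by a fixed positive constant, so they admit no convergent subsequence. Compactness additionally requires that the mass carried by high generations be negligible; this is exactly what the paper's compact sets encode through the constraint that all particles belong to generations $\le n$, and it is controlled by the bound \eref{e:expOffspring} of Proposition~\ref{prop:numPart} (see the set $K_n^1$ in the proof of Proposition~\ref{prop:compact}). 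The fix is available to you, since you already invoke Proposition~\ref{prop:numPart}, but as written your marginal-tightness argument does not establish containment in a compact subset of $\CX$.

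Two secondary points. First, your justification for particles dying in $(t,t+h]$ (``overshoot of the order of one step'') is not the relevant mechanism: either match such a particle to its stopped position, so its cost is the increment of the stopped walk over a window of length $h$, or note that dying within time $h$ forces $d(X_t,\d\CM)$ to be dominated by the running maximum of the increments; the overshoot past the barrier plays no role. Second, the population-weighted displacement moments that you sketch via the supermartingale property of $v+ax$ along ancestry lines are where the real work lies: one cannot simply combine a per-line estimate with $\E N_t^q$, since positions and population size are strongly correlated. What is needed is a bound on quantities of the form $\E\int |x-x_0|^{2p}\,\mu_t^\eps(dx,dv,dk)$, which the paper establishes in Proposition~\ref{prop:moments} via the unbiasedness identity of Theorem~4.1 in \cite{DMC} together with a renewal argument over the lifetime of the ancestor particle; your route is plausible but would have to be developed into a statement of that strength before the Cauchy--Schwarz step in the increment bound can be carried out.
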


\begin{proof}
Combining \cite[Theorem~3.6.4]{Dawson} and \cite[Theorem~8.3]{Billingsley}, we
see that, in order to obtain tightness, it is sufficient to show that:
\begin{claim}
\item For every $\delta > 0$, there exists a compact set $K_\delta \subset \CX$ such that
$\P(\mu_t^\eps \in K_\delta) > 1-\delta$, uniformly over $t \in [0,1]$ and $\eps < \eps_0$.
\item There exists $\alpha > 0$ and $C>0$ such that $\E \|\mu_t^\eps - \mu_s^\eps\|_p^q \le C|t-s|^{\alpha q}$,
uniformly over all $s,t\in [0,1]$ and $\eps < \eps_0$.
\end{claim}
The first claim then follows from Proposition~\ref{prop:compact} below,
while the second claim is the content of Proposition~\ref{prop:Kolmogorov}.
\end{proof}

The proof of this result is the content of the remainder of this subsection and goes roughly as follows.
In Section~\ref{sec:boundsNPart}, we obtain a moment bound on the number of particles alive at any fixed time $t \in [0,1]$,
which is uniform in $\eps > 0$. This then allows to obtain the compactness at fixed time in Section~\ref{sec:compactFixed}.
The verification of Kolmogorov's continuity criterion is the content of Section~\ref{sec:Kolmo}.

\subsection{Moment bounds on the number of particles}
\label{sec:boundsNPart}

In the sequel, we will denote by $\K_p(X)$ the $p$th cumulant of a random variable $X$ and by $\K_p(X\,|\, \FF)$ the
same cumulant, conditioned on the $\sigma$-field $\FF$. We will use the important property that $\K_p(X+Y\,|\,\FF) = \K_p(X\,|\,\FF) + \K_p(Y\,|\,\FF)$,
provided that $X$ and $Y$ are independent, conditionally on $\FF$. We will also use the fact that if $X$ is a positive random variable, then
$\K_p(X) \le \E X^p$ and there exists a constant $C$ such that the bound
\begin{equ}[e:boundMoments]
\E X^p \le C\sum_{q \le p} \bigl(\K_q (X)\bigr)^{p/q}\;,
\end{equ}
holds.

Our aim now is to obtain a bound on the cumulants of the number of particles alive at time $t$ which is independent of $\eps$.
We start with an initial configuration containing only one particle, which belongs to generation $0$,
and we set $N^0_t \in \{0,1\}$, depending on
whether or not this particle is still alive at some subsequent time $t$. 
We also define $N^n_t = \mu_t^\eps(\CM^n)$, where $\CM^n = \{(x,v,\ell) \in \CM\,:\, \ell=n\}$, which is
 the number of particles in the $n$th generation that are
alive at time $t$. We also denote by $N^n_{s,t}$ the number of such particles that were created at time $s \le t$.

For any $X_0 = (x,v)$ with $v > -ax$, we write $\E_{X_0}$ for expectations of observables for the process
generated by starting Algorithm~\ref{tdmc} with underlying dynamic \eref{e:RW}, started with one single initial particle 
in generation $0$ at location $x$ with tag $v$.
Finally, for $x \in \R$, we write $\E_x$ for the same expectation, but where the initial particle has tag $v = +\infty$,
meaning that it is ``immortal''.
We then have the following result:

\begin{proposition}\label{prop:numPart}
Consider the situation of Theorem~\ref{theo:tight}.
For every $p \ge 0$, there exist $C_p>0$ and $\eps_0 > 0$ such that, under the rules of Algorithm \ref{tdmc}, the number $N_t = \mu_t^\eps(\CM)$ of particles alive 
at time $t$ satisfies $\E_{x} |N_t|^p \le 2\exp(C_p t)$, uniformly over all $\eps \le \eps_0$ and $x \in \R$.
Furthermore, there exists $\rho > 0$ such that 
 \begin{equ}[e:expOffspring]
 \E_{x} N^n_t \le (n+1)^{t/\rho} 2^{-n}\;,
 \end{equ}
uniformly over all $\eps \le \eps_0$, $n \ge 0$, and $t>0$.

Finally, denoting by $R^\gamma_t$ the number of offspring alive at time $t$ that have never been at distance more than $\gamma$
from $\d \CM$, we have the bound
\begin{equ}
 \E_{x} R^\gamma_t \le C \gamma\;,
\end{equ}
uniformly over $t \in [0,1]$, $\gamma \in (0,1]$, and $\eps \le \eps_0\wedge\gamma^2$.
\end{proposition}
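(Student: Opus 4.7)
\emph{Overall approach.} The plan is to exploit the branching structure of Algorithm~\ref{tdmc}: conditional on the trajectory of the ancestor, the first-generation offspring form a marked point process with explicit intensity, and conditional on all generation-$(n{-}1)$ particles and their trajectories, the $n$th generation decomposes as an independent superposition over parents. This conditional independence makes cumulants additive across parents and is the principal substitute for the exponential moments available for the Brownian fan (Theorem~\ref{theo:Nt}) but \emph{not}, as emphasised in Section~\ref{sec:noConv}, for the discrete process uniformly in $\eps$. I would prove the three claims in the stated order: first the generational decay \eref{e:expOffspring}, then the moment bound on $N_t$ by summation, and finally the $R^\gamma_t$ bound by a refinement of the first-generation estimate.

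\emph{Step 1: First-generation estimates for a single parent.} Parametrise particles by $\delta = v + ax$, the (scaled) distance to $\partial \CM$, so each particle's $\delta$-coordinate obeys the random walk \eref{e:RW}. From Algorithm~\ref{tdmc} with $\v(x,y) = -a(y-x)$, a step of size $\xi_{k+1}$ creates offspring with probability $\le a\sqrt\eps(\xi_{k+1})_+$, and conditional on creation the offspring's initial distance $\delta_0$ is uniform on $[0, a\sqrt\eps\xi_{k+1}]$. Corollary~\ref{cor:boundRW}, applied to the offspring with death at $0$, gives the survival estimate $\P(\text{offspring alive } \sigma \text{ later} \mid \delta_0) \lesssim \delta_0/\sqrt{\sigma \vee \eps}$. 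The exponential-moment hypothesis on $\nu$ controls $\E \xi_+^q$ for every $q \ge 1$; combining this with the creation probability, the uniform law of $\delta_0$, and the survival bound gives, upon summing over steps $k\eps \le t$, the bound $\E N^1_t \lesssim \sqrt t$. The analogous computation at the level of conditional cumulants produces bounds of order $t^{p/2}$ for the $p$th cumulant, since each step's contribution is Bernoulli-like.

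\emph{Step 2: Generation recursion and the bound on $\E N_t^p$.} Writing $m_n(t)$ for a uniform-in-initial-condition upper bound on $\E_x N^n_t$, the independence of offspring across parents and Step~1 yield
\begin{equ}
m_n(t) \le C \int_0^t \frac{m_{n-1}(s)}{\sqrt{t-s}}\,ds\;.
\end{equ}
Iterating this fractional integral (or invoking the fractional Gronwall lemma used in the proof of Theorem~\ref{theo:Nt}) gives $m_n(t) \le C^n t^{n/2}/\Gamma(n/2+1)$, which for sufficiently large $\rho$ and $t$ in a bounded interval sits below $(n+1)^{t/\rho} 2^{-n}$, proving \eref{e:expOffspring}. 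Summing over $n$ yields $\E_x N_t \le 2\exp(C_1 t)$. For higher moments, the same recursion applied to conditional cumulants generation by generation, combined with the moment-from-cumulants conversion \eref{e:boundMoments}, delivers $\E_x N_t^p \le 2\exp(C_p t)$ for every $p \ge 1$, with $C_p$ depending on $p$ but not on $\eps$.

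\emph{Step 3: The $R^\gamma_t$ bound.} Here the offspring's own trajectory is additionally required to lie in $[0,\gamma]$. Corollary~\ref{cor:boundRW} gives $\P(\text{offspring reaches } \gamma \text{ before dying} \mid \delta_0) \le C\delta_0/\gamma$, and iterating with the strong Markov property yields Dirichlet-type exponential decay for the survival-in-band probability, namely
\begin{equ}
\P\bigl(\text{alive at } \sigma,\ \text{trajectory in } [0,\gamma] \mid \delta_0\bigr) \le C\delta_0 \min\!\bigl(1/\gamma,\,1/\sqrt{\sigma}\bigr)\, e^{-c\sigma/\gamma^2}\;.
\end{equ}
The exponential factor is essential: the time integral $\int_0^t \min(1/\gamma, 1/\sqrt\sigma)\, e^{-c\sigma/\gamma^2}\,d\sigma$ is of order $\gamma$ uniformly in $t \le 1$, whereas without the exponential it would be of order $t/\gamma$. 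The constraint $\eps \le \gamma^2$ is precisely what keeps the discrete-to-continuous approximations in Corollary~\ref{cor:boundRW} valid at scale $\gamma$. Combining with the per-step contribution $\E[\delta_0 \mathbf{1}_{\text{created}}] = O(\eps)$ gives a first-generation bound of order $\gamma$, and the same estimate applied across generations, summed against the geometric parent-count bound from Step~2, gives $\E_x R^\gamma_t \le C\gamma$ uniformly in $t \in [0,1]$.

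\emph{Main obstacle.} The principal difficulty is the lack of uniform-in-$\eps$ exponential moments for the discrete process (cf.\ Section~\ref{sec:noConv}), so the argument of Theorem~\ref{theo:Nt} does not transfer directly; the cumulant-based generation recursion is the key replacement. A second subtlety appears in Step~3: a naive estimate would produce an $O(1/\gamma)$ bound, and recovering the sharp $O(\gamma)$ scaling requires isolating the Dirichlet-type exponential decay on the band $[0,\gamma]$ purely from Corollary~\ref{cor:boundRW}, as continuous heat-kernel bounds are not immediately available in the discrete regime.
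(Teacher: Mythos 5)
Your outline is essentially the paper's own strategy: conditional cumulants that are additive across parents given $\FF^{n-1}$, converted back to moments via \eref{e:boundMoments}; a per-step creation rate of order $\sqrt\eps$ multiplied by a gambler's-ruin survival probability of order $\sqrt\eps/\sqrt{t-s}$, producing the $\int_0^t (t-s)^{-1/2}\,ds$ convolution across generations; extension in time by the Markov property; and, for $R^\gamma_t$, a gambler's-ruin prefactor times a confinement-in-the-band decay on the time scale $\gamma^2$, integrated in time to give $O(\gamma)$. Your factorial bookkeeping $m_n(t)\le C^n t^{n/2}/\Gamma(n/2+1)$ is a perfectly good (in fact slightly cleaner) substitute for the paper's $(\lambda_1 t)^{n/2}$-plus-induction route to \eref{e:expOffspring}, and your exponential confinement factor can be replaced, as in the paper, by the polynomial decay $1\wedge(\gamma^2/\sigma)^q$ obtained from Brownian scaling and the coupling of \cite{ConvRateBM}, which integrates to $O(\gamma)$ just as well.

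The genuine gap is in the higher-moment step, which is the crux of the paper's proof and which you dispatch in one sentence. Conditional on $\FF^{n-1}$, the $q$th cumulant of $N^n_t$ is indeed additive, but it is a \emph{random} linear functional of the generation-$(n-1)$ configuration (a sum over alive parents and steps, with step-dependent factors), so bounding $\E\bigl[\K_q(N^n_t\,|\,\FF^{n-1})^{p/q}\bigr]$ by $\sup_s\E (N^{n-1}_s)^p$ is not ``the same recursion'': it requires pulling the power $p/q$ inside the weighted sum, which the paper does via the normalised weights of \eref{e:boundSum} (a Jensen-type step), together with \eref{e:Holder}, the factor $|N^{n-1}|^{q-1}$, and Lemma~\ref{lem:unif} for the offspring law $\CI(e^{a\sqrt\eps\xi_+}-1)$. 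Relatedly, your claimed first-generation scaling is off: the $p$th conditional cumulant of $N^1_t$ is of order $\sqrt t$ (each step contributes order $\eps/\sqrt{t-s}$ to \emph{every} cumulant), not $t^{p/2}$; indeed $\E(N^1_t)^p\ge \P(N^1_t\ge 1)\asymp\sqrt t$, so it is the single factor $\sqrt t$ per generation that must be propagated, exactly as in the paper's $\E(N^n_t)^p\lesssim(\lambda_p t)^{n/2}$. Two smaller corrections: \eref{e:expOffspring} calls for $\rho$ sufficiently \emph{small} (the bound $(n+1)^{t/\rho}2^{-n}$ becomes harder as $\rho$ grows); and neither the survive-to-time-$\sigma$ estimate nor the band-confinement decay follows from Corollary~\ref{cor:boundRW} alone, which only controls two-sided spatial hitting probabilities — the paper invokes the gambler's ruin theorem of \cite{RWLawler} for \eref{e:boundSurv} and the coupling of \cite{ConvRateBM} for the confinement bound, and some such additional (standard, uniform in $\eps\le\gamma^2$) input is needed in your Step~3 as well.
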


The proof will make use of the following elementary fact where, for $\lambda = n + p$ with $n \in \N$ and $p \in (0,1]$,
we denote by $\CI(\lambda)$ the law of a random variable $Y$ such that $Y= n$ with probability $1-p$ and $Y = n+1$ with probability $p$.
(This is so that $\E Y = \lambda$.)

\begin{lemma}\label{lem:unif}
Let $Y$ be a random variable with law $\CI(\lambda)$.
Then, for any $q \ge 1$ and any $\lambda>0$, one has the bound $\E Y^q \le \lambda + (2\lambda)^q$.
\end{lemma}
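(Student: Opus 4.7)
The plan is to just unpack the definition of $\CI(\lambda)$ and then split on the size of $\lambda$. Writing $\lambda = n+p$ with $p \in (0,1]$, by definition
\begin{equ}
\E Y^q = (1-p)n^q + p(n+1)^q \le (n+1)^q\;,
\end{equ}
so everything reduces to controlling $(n+1)^q$ in terms of $\lambda$.

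If $\lambda \le {1\over 2}$, then necessarily $n = 0$ and $p = \lambda$, in which case $\E Y^q = p = \lambda$, which is already bounded by $\lambda + (2\lambda)^q$. If on the other hand $\lambda > {1\over 2}$, I would observe that $n+1 \le 2\lambda$: indeed, when $n \ge 1$ this is just $n+1 \le 2n \le 2(n+p)$, and when $n = 0$ we must have $p > {1\over 2}$, so $n+1 = 1 \le 2p = 2\lambda$. Consequently $\E Y^q \le (n+1)^q \le (2\lambda)^q$, which in turn is bounded by $\lambda + (2\lambda)^q$.

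There is no real obstacle here; the only mildly non-obvious point is that the naive bound $(2\lambda)^q$ fails for very small $\lambda$ (since $(2\lambda)^q$ can be smaller than $\lambda$ when $\lambda \ll 1$ and $q > 1$), which is exactly why the statement carries the extra additive $\lambda$ term. The case split on $\lambda \lessgtr {1\over 2}$ is what handles this cleanly.
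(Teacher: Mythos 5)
Your proof is correct and follows essentially the same route as the paper: write out $\E Y^q = (1-p)n^q + p(n+1)^q$ explicitly and then do an elementary case analysis using $n+1 \le 2\lambda$ in the non-trivial case (the paper splits on $n=0$ versus $n\ge 1$ rather than on $\lambda \lessgtr \tfrac12$, but this is the same argument). Nothing further is needed.
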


\begin{proof}
By inspection, one has
\begin{equ}
\E Y^q = p (n+1)^q + (1-p) n^q\;.
\end{equ}
In the case $n=0$, one then has $\E Y^q = \lambda$, so the statement is true. For $n \ge 1$, one uses the 
fact that both $n+1$ and $n$ are bounded by $2\lambda$, and the claim follows at once.
\end{proof}

\begin{proof}[of Proposition~\ref{prop:numPart}]
We restrict ourselves to times that are integer multiples of $\eps$. Furthermore, from now on, we fix 
an initial condition $x$,
so that we just write $\E$ instead of $\E_x$.
We also denote by $\FF^n$ the $\sigma$-algebra containing all information pertaining to particles
in generations up to (and including) $n$. 
With this notation at hand, we obtain for $N^n_t$ the bound
\begin{equs}
\E\bigl( N^n_t\bigr)^p &\lesssim \E \sum_{q=1}^{p} \bigl(\K_q (N^n_t\,|\, \FF^{n-1})\bigr)^{p/q}\\
&= \E \sum_{q=1}^{p} \Bigl(\sum_{\eps \ell \le t} \K_q (N^n_{\eps\ell, t}\,|\, \FF^{n-1})\Bigr)^{p/q}\\
&\lesssim \sum_{q=1}^{p} \E \Bigl(\sum_{\eps \ell \le t} \E \bigl(|N^n_{\eps\ell, t}|^q\,|\, \FF^{n-1}\bigr)\Bigr)^{p/q}\;,\label{e:boundNkp}
\end{equs}
where we used \eref{e:boundMoments} in the first step and the independence of the offspring in the second step.
In the above expression, $\ell$ takes only integer values.
Note now that 
\begin{equ}[e:boundSum]
\sum_{\eps \ell \le t} {\eps \over \sqrt{t(t + \eps -\eps\ell)}}  \le C\;,
\end{equ}
uniformly over all $t\ge \eps$. As a consequence, for any positive sequence $a_n$ and any power $r \ge 1$, one has the bound
\begin{equs}
\Bigl(\sum_{\eps \ell \le t} a_{\ell} \Bigr)^r &= \Bigl(\sum_{\eps \ell \le t}{\eps \over \sqrt{t(t + \eps -\eps\ell)}}  \,\tilde a_{\ell} \Bigr)^r 
\lesssim \sum_{\eps \ell \le t}{\eps \over \sqrt{t(t + \eps -\eps\ell)}}\,  \tilde a^r_{\ell} \\
&=  \sum_{\eps \ell \le t}{t^{r-1\over 2}(t + \eps -\eps\ell)^{r-1\over 2} \over \eps^{r-1}}\,  a^r_{\ell} \;,
\end{equs}
where we have set $\tilde a_\ell = \eps^{-1}  \sqrt{t(t + \eps -\eps\ell)}\, a_\ell$ in the intermediate steps. Applying this inequality to \eref{e:boundNkp}, we obtain
\begin{equ}[e:myBound]
\E\bigl( N^n_t\bigr)^p \lesssim  \sum_{q=1}^{p} \sum_{\eps \ell \le t} {t^{p-q\over 2q}(t + \eps -\eps\ell)^{p-q\over 2q} \over \eps^{p-q\over q}} \E \Bigl(\E \bigl( |N^n_{\eps\ell, t}|^q\,|\, \FF^{n-1}\bigr)\Bigr)^{p/q}\;.
\end{equ}

Denote now by $M^{n,j}_s$ the number of particles in the $n$th generation created at time $s$ 
by the $j$th particle from the $n-1$st generation. We write 
$\GG_s$ for the $\sigma$-algebra generated by this additional data. 
Each of these particles yields a contribution to $N^n_{s,t}$ of either $1$ or $0$, depending whether it
survives or not.  
Furthermore, these contributions, which we will denote by $S^{n,j,i}_{s,t}$, 
are all independent and, for the same value of $j$, they are also identically distributed. By definition, we thus have the identity
\begin{equ}
N^n_{\eps\ell, t} =  \sum_{j=1}^{N^{n-1}_{\eps(\ell-1)}} \sum_{i=1}^{M^{n,j}_{\eps\ell}} S^{n,j,i}_{\eps\ell,t}\;.
\end{equ}

We now twice make use of the inequality
\begin{equ}[e:Holder]
\Bigl(\sum_{j=1}^m a_j\Bigr)^q \le m^{q-1} \sum_{j=1}^m a_j^q\;,
\end{equ}
which is valid for any $q \ge 1$, $m \ge 0$ and sequence of positive numbers $a_j$.
This yields the bound 
\begin{equ}[e:boundN]
\bigl(N^n_{\eps\ell,t}\bigr)^q \le (N^{n-1}_{(\ell-1)\eps})^{q-1} \sum_{j=1}^{N^{n-1}_{(\ell-1)\eps}} \bigl(M^{n,j}_{\eps\ell}\bigr)^{q-1} \sum_{i=1}^{M^{n,j}_{\eps\ell}} S^{n,j,i}_{\eps\ell,t}\;.
\end{equ}
 Note that since $S^{n,j,i}_{\eps\ell,t}$ can only take
the values $0$ or $1$, raising it to the power $q$ makes no difference.
By the ``gambler's ruin theorem'' \cite[Thm~5.1.7]{RWLawler}, we have  the bound
\begin{equ}[e:boundSurv]
\E \bigl(S^{n,j,i}_{\eps\ell,t}\,|\, \FF^{n-1}\vee \GG_{\eps\ell}\bigr) \le C\sqrt{\eps}{\xi^j +1 \over \sqrt{t+\eps - \ell \eps}}\;,
\end{equ}
where $\sqrt{\eps}\xi^j$ denotes the step performed by the $j$th particle of the $(n-1)$st generation
between times $\eps(\ell-1)$ and $\eps\ell$.
Regarding the number of offspring $M^{n,j}_{\eps\ell}$, it follows from the definition of the algorithm
that its distribution is given by $\CI(\exp(a\sqrt{\eps}\xi^j_+) - 1)$, where $\xi^j_+$ denotes the positive part
of $\xi^j$. Combining this with \eref{e:boundN}, Lemma~\ref{lem:unif}, and \eref{e:boundSurv}, we
thus obtain the bound
\begin{equ}
\E \bigl(|N^n_{\eps\ell,t}|^q \,|\, \FF^{n-1}\bigr) \lesssim |N^{n-1}_{(\ell-1)\eps}|^{q-1}\sum_{j=1}^{N^{n-1}_{(\ell-1)\eps}} \sqrt{\eps} { \bigl(e^{a\sqrt{\eps}\xi^j_+} - 1\bigr) + \bigl(e^{a\sqrt{\eps}\xi^{j}_+} - 1\bigr)^q  \over \sqrt{t+\eps - \eps \ell} }\bigl(\xi^j + 1\bigr)\;.
\end{equ}
In order to simplify this expression, we use the fact that, for $x \ge 0$, 
there exists a constant $C$ depending on $q$ such that
\begin{equ}
e^x - 1 \le x e^x\;,\qquad (e^x - 1)^q \le C x e^{qx}\;,
\end{equ}
for every $q \ge 1$.
This yields
\begin{equ}
\E \bigl(|N^n_{\eps\ell,t}|^q \,|\, \FF^{n-1}\bigr) \lesssim |N^{n-1}_{(\ell-1)\eps}|^{q-1}  \sum_{j=1}^{N^{n-1}_{(\ell-1)\eps}} \eps\, {\one_{\xi^j \ge 0} e^{aq\sqrt{\eps}\xi^j}  \over \sqrt{t+\eps - \eps \ell} }\bigl(\xi^j + 1\bigr)^2\;.
\end{equ}
Using \eref{e:Holder} once again, we get the bound
\begin{equ}
\bigl(\E \bigl(|N^n_{\eps\ell,t}|^q \,|\, \FF^{n-1}\bigr)\bigr)^{p\over q} \lesssim |N^{n-1}_{(\ell-1)\eps}|^{p-1}  \sum_{j=1}^{N^{n-1}_{(\ell-1)\eps}} \eps^{p\over q} {\one_{\xi^j \ge 0}  e^{ap\sqrt{\eps}\xi^j}  \over (t+\eps - \eps \ell)^{p\over 2q} }\bigl(\xi^j + 1\bigr)^{2p\over q}\;.
\end{equ}
At this stage, we note that, conditional on the state of the system at time $\eps (\ell-1)$ 
the steps $\xi^j$ are all independent and identically distributed with law $\nu$.
Setting
\begin{equ}
P_\eps = \int_0^\infty e^{ap \sqrt \eps z}  (z+1)^{2p\over q}\nu(dz)\;, 
\end{equ}
it follows that 
\begin{equ}
\E \bigl(\E \bigl(|N^n_{\eps\ell,t}|^q \,|\, \FF^{n-1}\bigr)\bigr)^{p\over q} \lesssim {\eps^{p\over q}  P_\eps \over (t+\eps - \eps \ell)^{p\over 2q}}\E |N^{n-1}_{(\ell-1)\eps}|^p \;.
\end{equ}
Inserting this into \eref{e:myBound} yields for $t \le 1$ the bound
\begin{equ}
\E\bigl( N^n_t\bigr)^p \lesssim  P_\eps \sum_{\eps \ell \le t} {\eps \over \sqrt{t + \eps -\eps\ell}} \E |N^{n-1}_{(\ell-1)\eps}|^p\;.
\end{equ}
On the other hand, since we assumed that the initial particle is immortal, 
$N^{n-1}_s$ is stochastically increasing as a function of $s$, so that we obtain from \eref{e:boundSum} the bound
\begin{equ}
\E\bigl( N^n_t\bigr)^p \lesssim  P_\eps  \sqrt t \,\E \bigl(N^{n-1}_t\bigr)^p\;.
\end{equ}
Since, by the exponential integrability assumption on $\nu$, 
there exists $C > 0$ such that, for $\eps$ small enough, $P_\eps \le C$ and since $N^0_t = 1$, we conclude that there exists some $\lambda_p$ such that,
for all $n \ge 1$, one has the bound
\begin{equ}[e:boundNkt]
\E\bigl( N^n_t\bigr)^p \lesssim  \bigl(\lambda_p t\bigr)^{n/2} \;.
\end{equ}
It follows that there exists $t_\star > 0$ (depending on $p$) such that $\E (N_t)^p \le 2$ (say), for every $t \le t_\star$.
To show that $\E (N_t)^p < \infty$ for every $t$, we use again \eref{e:Holder}, combined with the Markov property of the process
to conclude that $\E (N_{t+t_\star})^p \le 2 \E (N_t)^p$, from which the first claim follows.

To get \eref{e:expOffspring}, observe that if we choose $\rho$ small enough so that $\lambda_1 \rho \le 1/4$, the bound with $t \le \rho$ 
follows from \eref{e:boundNkt}. Denote now by $\CF_t$ the filtration generated by all events up to time $t$.
For $t$ and $\rho$ that are multiples of $\eps$  it follows from the Markov property that 
\begin{equ}
\E \bigl(N^n_t \,|\, \CF_{t-\rho}\bigr) \le \sum_{\ell=0}^n N^\ell_{t-\rho}\, \E N^{n-\ell}_{\rho}\;
\end{equ}
where the expectation on the right is taken with respect to an initial condition with one single immortal particle.
The claimed bound for arbitrary $t>0$ therefore follows by induction.

It remains to obtain the bound on $R^\gamma_t$. Denote by $R^\gamma_{s,t}$ the number of offspring
contributing to $R^\gamma_t$ that are created at time $s < t$, so that $R^\gamma_t = \sum_{\eps k \le t} R^\gamma_{k\eps,t}$.
Denote now by $Q_{z,t}^{\gamma}$ the probability that, after time $t$, the random walk \eref{e:RW} with initial condition $\sqrt \eps z$ 
has never exited the interval $[0,\gamma]$. It then follows that
\begin{equ}[e:boundR]
\E \bigl(R^\gamma_{k\eps,t}\,|\, \CF_{k\eps}\bigr) = a\sqrt \eps N_{k\eps} \int_0^\infty \int_0^z e^{a\sqrt \eps z} Q^{\gamma}_{y,t-k\eps}\,dy\,\nu(dz)\;.
\end{equ}
This is because, by Step~3 of Algorithm~\ref{tdmc}, the expected number of offspring created by a particle performing an upward step of size $\sqrt \eps z$ is given by $e^{a\sqrt \eps z} - 1$ while, if we denote by
$\sqrt \eps y$ the distance between the starting point of the offspring and the ``wall'' below which it is killed,
the law of $y$ is given by $c^{-1} \int_0^z e^{a\sqrt \eps y} \,dy$ where $c$ is a normalizing constant.
Since $c = (e^{a\sqrt \eps z} - 1)/a\sqrt \eps$, \eref{e:boundR} follows.

Using again the gambler's ruin theorem, combined with the Markov property of the random walk, we obtain for $Q_{z,t}^{\gamma}$
the bound
\begin{equ}
Q_{z,t}^{\gamma} \le C {(z+1) \sqrt \eps \over \sqrt{t+\eps}} \bar Q^\gamma_{t/2}\;,
\end{equ}
where $\bar Q^\gamma_t$ is the probability that the random walk \eref{e:RW}, starting at the origin, stays within $[-\gamma, \gamma]$ up to time $t$.
Using the scaling properties of Brownian motion, combined with \cite{ConvRateBM}, we conclude that one has the bound
\begin{equ}
\bar Q^\gamma_t \le 1 \wedge {C\delta^{2q} \over t^q}\;,
\end{equ}
for every $q > 0$, so that 
\begin{equ}
Q_{z,t}^\gamma \lesssim {(z+1) \sqrt \eps \over \sqrt{t+\eps}} \Bigl(1 \wedge {\delta^{2q} \over t^q}\Bigr)\;.
\end{equ}
Inserting this bound into \eref{e:boundR}, using the previously obtained bounds on $N_{k\eps}$, and summing over all $k$, the claim follows at once.
\end{proof}

\begin{remark}
It follows from Theorem~4.1 in \cite{DMC} that, in the particular case when the initial tag $v$ is distributed
according to the logarithm of a uniformly distributed random variable, one has the identity
$\E N_t = \E e^{a y_t}$, where $y_t$ is the rescaled random
walk with steps $\nu$. The (at least one-sided) exponential integrability assumption on $\nu$ is therefore a necessary assumption
in order to obtain any kind of moment bounds on $N_t$.
\end{remark}

\begin{remark}
Even if we assume that $\nu$ has Gaussian tails and despite the result previously 
obtained for the Brownian fan in Theorem~\ref{theo:Nt},
it is \textit{not} true in general that $N_t$ defined by Algorithm \ref{tdmc}, has uniform exponential
moments as $\eps \to 0$. This is because even for the first step, the probability that the original particle 
performs a step of order $\eps^{-p}$
is of order $\exp(-\eps^{-2p-1})$. If this were to happen, the number of offspring created in this way would be of order $\exp(\eps^{-p})$,
which immediately shows that exponential moments blow up as $\eps \to 0$. 
\end{remark}

\subsection{Compactness at any fixed time}
\label{sec:compactFixed}

We now show that we can find a compact set $K_\delta$ such that  $\mu_t^\eps$
belongs to $K_\delta$ with high probability, uniformly over $\eps$ and $t\in[0,1].$
Our first ingredient for this is the following moment bound:

\begin{proposition}\label{prop:moments}
Consider the setting of Theorem~\ref{theo:tight}.
Then, there exist constants $C$ and $\eps_0$ such that, for every $t \in [0,1]$ and every $\eps \le \eps_0$, the bound
\begin{equ}[e:wantedBound]
\E_{X_0}\Bigl(\int_\CM |x-x_0|^{2p} \mu_t^\eps(dx,dv,dk)\Bigr) \le C t^p\;,
\end{equ}
holds uniformly over all initial conditions $X_0 = (x_0, v_0, 0) \in \CM$.
\end{proposition}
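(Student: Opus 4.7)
Since $x\mapsto x^p$ is concave on $\R_+$ for $p\le 1$, Jensen's inequality gives
\begin{equ}
\sum_j |x^{(j)}_t - x_0|^{2p} \le N_t^{1-p}\Bigl(\sum_j|x^{(j)}_t-x_0|^2\Bigr)^p\;,
\end{equ}
and H\"older's inequality reduces the claim, via the bound $\sup_{\eps\le\eps_0,\,t\le 1}\E_{X_0} N_t \le C$ from Proposition~\ref{prop:numPart}, to the case $p=1$. Translation invariance in $x$ lets us set $x_0 = 0$. A coupling of two copies of the system driven by the same noise and the same uniforms for the $\lfloor\cdot\rfloor$ in \eref{e:noff} shows that $\mu_t^\eps$ is monotone in $v_0$: by the tag-preservation identity $v' = v$ under $\theta' = \theta/P$, and because the new tags assigned to the ``other'' offspring in Step~4 of Algorithm~\ref{tdmc} depend only on the parent's step (not on $v_0$), increasing $v_0$ merely delays the killing of the initial particle without moving anyone or changing the descendant tags. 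Hence it suffices to prove the bound in the immortal case $v_0 = +\infty$.

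The main step is a generation-by-generation decomposition. Set $\sigma^n_t = \int_{\CM^n}x^2\,\mu_t^\eps(dx,dv,dk)$, so that $\int_\CM x^2\,\mu_t^\eps = \sum_{n\ge 0}\sigma^n_t$. The immortal zeroth generation contributes $\E\sigma^0_t = \E y_t^2 = t$. For $n\ge 1$, the elementary estimate $(x^{(j)}_t)^2 \le 2(x^{(j)}_t - b_j)^2 + 2 b_j^2$, with $b_j$ the birth position of particle $j$, gives $\sigma^n_t \le 2\Sigma^n_t + 2 B^n_t$.

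The core technical input is the killed-walk second-moment bound
\begin{equ}[e:killedSecond]
\E\bigl(Y_T^2\,\one_{\text{alive at }T}\bigr)\le C\,h\sqrt T\;,
\end{equ}
valid for the rescaled random walk of Section~\ref{sec:BMformal} starting at $0$ and killed at the first hit of $-h$. For Brownian motion this is immediate from the explicit transition density $p_T(x)\propto e^{-x^2/(2T)} - e^{-(x+2h)^2/(2T)}$ on $(-h,\infty)$ and its leading-order expansion $p_T(x)\approx (2hx/T)\,e^{-x^2/(2T)}/\sqrt{2\pi T}$ for small $h$; it transfers to the random walk by the quantitative coupling of \cite{ConvRateBM} already used in Proposition~\ref{prop:defG}. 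Since a parent taking an upward step $\sqrt\eps\xi$ spawns on average $\approx a\sqrt\eps\xi$ offspring whose initial heights are uniform in $[0,a\sqrt\eps\xi]$, inserting \eref{e:killedSecond} and integrating over $\nu$ shows that the contribution to $\E\Sigma^n_t$ from spawnings at step $k\eps$ is $\lesssim \eps\sqrt{t-k\eps}\,\E N^{n-1}_{k\eps}$, so $\E\Sigma^n_t \lesssim t^{3/2}\sup_{s\le t}\E N^{n-1}_s$. Analogously, using the gambler's ruin survival factor $\sqrt\eps/\sqrt{t-s+\eps}$ from \eref{e:boundSurv}, one expresses $\E B^n_t$ as a Volterra-type convolution of $\E\sigma^{n-1}_s$ against $\eps/\sqrt{t-s+\eps}$.

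Setting $A_n(t) = (\E\sigma^n_t)/t$ and using $\E N^{n-1}_s \lesssim 2^{-n}$ for $s\le 1$ from \eref{e:expOffspring}, these estimates combine into a recursion of the form $A_n(t)\le c_1\cdot 2^{-n}\sqrt t + c_2\sqrt t\,A_{n-1}(t)$, mirroring the recursion appearing in the proof of Proposition~\ref{prop:numPart}. Iterating yields $A_n(t)\lesssim (c_2\sqrt t)^n + 2^{-n}$ for $t$ small enough, which is summable in $n$ and gives the desired $\E\sum_j (x^{(j)}_t)^2\lesssim t$; the bound is then extended to all $t\le 1$ via the Markov property as at the end of the proof of Proposition~\ref{prop:numPart}. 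The principal obstacle is the contractivity step: the coefficient $c_2$ depends on $a$ and on the moments of $\nu$, so the $\sqrt t$-iteration closes only once $t$ is small enough, which is precisely why the Markov-property iteration in short time steps is needed to cover the full interval $[0,1]$.
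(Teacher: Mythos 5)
Your overall architecture is sound and genuinely different from the paper's argument. The paper disposes of \eref{e:wantedBound} much more quickly: it invokes the unbiasedness identity of Theorem~4.1 in \cite{DMC}, which for the uniform-tag initial condition gives the \emph{exact} identity $\E_0\int x^{2p}\tilde\mu_t^\eps = \E_0\bigl(e^{ay_t}y_t^{2p}\bigr)\le Ct^p$, and then upgrades to the immortal ancestor by a renewal argument (resurrect the ancestor as immortal when it dies, and close the resulting inequality using $\sup_{\eps\le 1}P_1<1$). Your route instead mimics the proof of Proposition~\ref{prop:numPart}: reduction to $p=1$ by H\"older, reduction to the immortal ancestor by the coupling/monotonicity argument (which is correct, and in fact supplies the justification the paper leaves implicit), a generation-by-generation split of the second moment into displacement-since-birth plus birth-position terms, and a Volterra-type recursion contracting for small $t$, extended to $[0,1]$ by the Markov property. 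This is heavier than the paper's proof but self-contained, and it does not rely on the companion paper's identity; the recursion coefficients you need ($\E N_t\le C$, \eref{e:expOffspring}, the survival bound \eref{e:boundSurv}) are all available.

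The one step that does not hold up as you justify it is the killed-walk bound \eref{e:killedSecond}. First, as stated it fails for the random walk at the relevant scale: the offspring start at height $h\sim\sqrt\eps$ above their barrier, and even with $h=0$ the walk survives its first step with probability $\approx\nu(\R_+)$ and then contributes $\asymp\sqrt\eps\sqrt T$, so the correct statement is $\E(Y_T^2\one_{\text{alive}})\lesssim (h+\sqrt\eps)\sqrt T$. Second, and more importantly, the proposed transfer from Brownian motion via the coupling of \cite{ConvRateBM} cannot work here: over a time interval of order $T$ the coupling error is of order $\eps^{1/4}$, which dwarfs the barrier distance $h\sim\sqrt\eps$, so the events ``alive'' for the walk and for the coupled Brownian motion differ with probability comparable to the quantity you are estimating. (This is exactly the phenomenon behind the paper's discussion of $G$ and the constant $1/2$ versus $1/4$: at distance $\sqrt\eps$ from the barrier the walk is \emph{not} well approximated by killed Brownian motion.) The good news is that the corrected bound is elementary to prove directly for the walk: write $\E(Y_T^2\one_{\text{alive}})=\int_0^\infty 2\lambda\,\P(Y_T>\lambda,\ \text{alive})\,d\lambda$, note that the event forces the walk to reach level $\lambda/2$ before $-h$ (probability $\lesssim (h+\sqrt\eps)/\lambda$ by the gambler's ruin estimate already used for \eref{e:boundSurv}) and then to climb a further $\lambda/2$ within time $T$ (probability $\lesssim e^{-c\lambda^2/T}+e^{-c\lambda/\sqrt\eps}$ by the exponential moments of $\nu$ and the strong Markov property), and integrate. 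Moreover the extra $\sqrt\eps$ is harmless downstream: since the expected number of offspring per step is itself $O(\sqrt\eps)$, the per-step contribution remains $\lesssim\eps\sqrt{t-k\eps}\,\E N^{n-1}_{k\eps}$, so your recursion and its conclusion are unaffected once the lemma is stated and proved in this corrected form.
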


\begin{proof}
We restrict ourselves to the case when $t$ is an integer multiple of $\eps$, since the bound on the remaining times
easily follows from our interpolation procedure.
Furthermore, we can restrict ourselves to the case when the initial particle is immortal, which formally corresponds
to setting $v = +\infty$. By translation invariance, we also restrict ourselves to the case where the initial particle
is located at the origin, and we denote the corresponding expectation by $\E_0$.

It follows from Theorem~4.1 in \cite{DMC} that if we choose $v = {1\over a} \log u$, where $u$ is drawn uniformly from $[0,1]$
and denote by $\tilde \mu_t^\eps$ the corresponding process, then one has the identity
\begin{equ}
\E_{0}\Bigl(\int_\CM x^{2p} \tilde\mu_t^\eps(dx,dv,dk)\Bigr) = \E_0 \Bigl(e^{a y_t} \bigl(y_t\bigr)^{2p}\Bigr)\;,
\end{equ}
where $y_t$ is the simple random walk \eref{e:RW} started at the origin. It follows immediately from the 
exponential integrability of $\nu$ that this quantity is bounded by $C t^p$ for $t \le 1$ and for
$\eps$ small enough. 

On the other hand, one can realise the process $\mu_t^\eps$, which starts with an immortal initial particle, in the following way:
\begin{enumerate}
\item Consider the process $\tilde \mu_t^\eps$, where the $v$-component is as above.
\item When the initial (generation $0$) particle is killed, replace it instantly by an immortal particle starting at the current location.
\end{enumerate}
Let $x^0_t$ denote the trajectory of the initial particle and let $s$ be the time at which the initial particle is killed and replaced.
Let $P_t = \P(s \le t)$.
By the construction just outlined, we have the recursion relation
\begin{equ}
F_{t,0} = E_t + P_t\, \E \bigl(F_{t-s, x^0_s}\,|\, s \le t \bigr)\;,
\end{equ}
where we set
\begin{equs}
F_{t,x^0_s} &= \E_{0}\Bigl(\int_\CM (x + x^0_s)^{2p} \mu_t^\eps(dx,dv,dk)\Bigr) \;,\\
E_t &= \E_{0}\Bigl(\int_\CM x^{2p} \tilde\mu_t^\eps(dx,dv,dk)\Bigr)\;. 
\end{equs}
(Remember that the difference between $E_t$ and $F_{t,0}$ is that in order to compute $F$, we start with an immortal particle.)
Setting $F_t = F_{t,0}$, using the fact that, for every $\delta > 0$ one can find $C_\delta$ such that 
$(x+x^0_s)^{2p} \le C_\delta (x^0_s)^{2p} + (1+\delta)x^{2p}$, and 
recalling that $E_t \le Ct^p$, we deduce that
\begin{equ}
F_t \le Ct^p + C_\delta P_t\E \bigl((x^0_s)^{2p} \E_{x^0_s} \bigl(N_{t-s}\bigr)\,|\, s \le t\bigr)\bigr) + (1+\delta) P_t\, \E\bigl(F_{t-s}\,|\, s \le t\bigr)\;,
\end{equ}
where $N_t$ is the number of particles alive at time $t$ for the system started with an immortal 
particle. Note now that, for $t \le 1$, we know from Proposition~\ref{prop:numPart} that the expected number of particles
alive at any given time is bounded by some constant uniform in $\eps$. Since this bound is also
uniform in the initial condition, we have
\begin{equ}
P_t \E \bigl((x^0_s)^{2p} \E_{x^0_s} \bigl(N_{t-s}\,|\, s \le t\bigr)\bigr) \lesssim P_t \E \bigl((x^0_s)^{2p}\,|\, s\le t\bigr) = \E \bigl((x^0_s)^{2p} \one_{s \le t}\bigr)\;.
\end{equ}
 Defining $\tilde s = s \wedge t$, we then obtain the trivial bound
\begin{equ}
\E \bigl((x^0_s)^{2p} \one_{s \le t}\bigr) \le \E_0 \bigl|y_{\tilde s}\bigr|^{2p} \le \E_0 \bigl|y_t\bigr|^{2p} 
\lesssim t^p\;,
\end{equ}
where we made use of the fact that $|y_t|^{2p}$ is a submartingale to obtain the second inequality.

Setting now $\bar F_t = \sup_{s \le t} F_s$, we can combine all of these bounds to get the inequality
\begin{equ}
\bar F_t \le C_\delta t^p + (1+\delta) P_t \bar F_t\;.
\end{equ}
Since one can check (for example by again using the fact that the random walk approximates a Brownian 
motion for small $\eps$) that $\sup_{\eps \le 1} \sup_{t\le 1} P_t = \sup_{\eps \le 1} P_1 < 1$, the claim follows at once
by choosing $\delta$ sufficiently small.
\end{proof}

This result can now be used to deduce the announced uniform tightness result over fixed times:

\begin{proposition}\label{prop:compact}
Consider the setting of Theorem~\ref{theo:tight}.
Then, for every $\delta>0$ there exists a compact set $K_\delta \subset \CX$ such that 
$\P_{X_0}(\mu_t^\eps \in K_\delta) > 1-\delta$, uniformly over $t \in [0,1]$, $\eps \le \eps_0$
and $X_0 \in \CM^0$.
\end{proposition}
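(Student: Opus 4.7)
The strategy is to apply a Prokhorov-type compactness criterion in $\ell^p(\CM)$ and deduce the required probability bound from the moment estimates of Propositions~\ref{prop:numPart} and~\ref{prop:moments}. A subset $K\subset\ell^p(\CM)$ is relatively compact provided $\sup_{\mu\in K}\|\mu\|_p<\infty$ and, for each $\eta>0$, the family of restrictions $\{\mu|_{A_\eta}\,:\,\mu\in K\}$ to $A_\eta=\{X\in\CM\,:\,d(X,\partial\CM)\ge\eta\}$ is a tight family of finite measures on $A_\eta$. Since $A_\eta\subset\R^2\times\N$, this tightness only requires uniformly bounded total mass, spatial supports contained in a common compact of $\R^2$, and bounded generation index. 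Given $\delta>0$, the candidate compact set is
\[
K_\delta=\bigl\{\mu\in\ell^p(\CM)\,:\,\|\mu\|_p\le M_0,\ \forall k\ge 1,\ \mu(A_{1/k})\le M_k,\ \mathrm{supp}(\mu|_{A_{1/k}})\subset[-R_k,R_k]^2\times\{0,\ldots,G_k\}\bigr\}\;,
\]
and the plan is to choose $M_0$ and the sequences $\{M_k\}$, $\{R_k\}$, $\{G_k\}$ (possibly depending on $X_0$ and $\delta$) so that the probability of $\mu_t^\eps$ violating any individual constraint is summable, with total below $\delta$.

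Most of the required probability bounds are immediate. The mass bound $\mu_t^\eps(A_{1/k})\le N_t$ together with $\E N_t\le C$ from Proposition~\ref{prop:numPart} handles the $M_k$-condition via Markov's inequality. For the spatial constraint, the event $\{\exists X^{(i)}\in A_{1/k}:|x^{(i)}|>R_k\}$ is contained in $\{\int|x-x_0|^{2p}\,\mu_t^\eps(dX)\ge(R_k-|x_0|)^{2p}\}$, so Proposition~\ref{prop:moments} bounds its probability by $C(R_k-|x_0|)^{-2p}$, which is as small as desired for $R_k$ sufficiently large. For the generation cap, Proposition~\ref{prop:numPart} and a union bound give
\[
\P(\exists n>G_k:N_t^n\ge 1)\le\sum_{n>G_k}\E N_t^n\le\sum_{n>G_k}(n+1)^{1/\rho}2^{-n}\;,
\]
which is as small as desired by taking $G_k$ large.

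The main obstacle is the uniform bound on $\E\|\mu_t^\eps\|_p$. Using $d^p(X,\partial\CM)\lesssim(v+ax)^p$, the contribution of the ancestor (if still alive) is bounded by $(v_0+ax^{(0)}_t)^p\lesssim d^p(X_0,\partial\CM)+|x^{(0)}_t-x_0|^p$, of bounded expectation by the random-walk moment bound. For each descendant $i$ born at time $\s_i$, the fact that the tag at birth lies in $[0,a\sqrt{\eps}\,\xi^+]$ and the identity $v^{(i)}+ax^{(i)}_t=(v^{(i)}+ax^{(i)}_{\s_i})+a(x^{(i)}_t-x^{(i)}_{\s_i})$ show that $v^{(i)}+ax^{(i)}_t$ is at most $O(\sqrt{\eps})$ plus the displacement of the particle's own excursion since birth. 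The plan for the sum over descendants is to condition on the birth time $\s$ and integrate: the rate at which parents create offspring that are still alive at time $t$ is controlled, via the gambler's ruin estimate \eref{e:boundSurv} combined with the one-step branching rate and the uniform bound on $\E N_\s$ from Proposition~\ref{prop:numPart}, by $Ca\,\E N_\s/\sqrt{t-\s}$; the conditional expectation of $(v^{(i)}+ax^{(i)}_t)^p$ given survival to time $t$ is of order $(t-\s)^{p/2}$ by a standard estimate on the height of a conditioned random-walk excursion of duration $t-\s$. Collecting these bounds yields
\[
\E \sum_{i\text{ descendant}}(v^{(i)}+ax^{(i)}_t)^p\lesssim\int_0^t(t-\s)^{(p-1)/2}\,d\s=O\bigl(t^{(p+1)/2}\bigr)\;,
\]
uniformly in $\eps\le\eps_0$ and $t\le 1$. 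A final union bound over all the constraints entering the definition of $K_\delta$ then completes the proof.
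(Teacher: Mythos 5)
There is a genuine gap at the very first step: the Prokhorov-type criterion you appeal to is false in $\ell^p(\CM)$. A uniform bound on $\|\mu\|_p$ together with tightness of the restrictions $\mu|_{A_\eta}$ for every fixed $\eta>0$ does not give relative compactness, because it does not prevent an arbitrarily large number of particles from sitting at a bounded total $\|\cdot\|_p$-cost ever closer to $\d\CM$. Concretely, let $\mu_n$ consist of $n$ coincident unit atoms at a fixed point of $\CM$ at distance $n^{-1/p}$ from $\d\CM$ (generation $0$, bounded $x$). Then $\|\mu_n\|_p=1$ for all $n$, and for each fixed $\eta>0$ one has $\mu_n(A_\eta)=0$ eventually, so all your hypotheses hold; yet $\{\mu_n\}$ has no convergent subsequence: testing against functions in $\Lip_p^0(\CM)$ that vanish in a neighbourhood of $\d\CM$ shows that any limit point would have to be the null measure, while $\|\mu_n-0\|_p=1$ (take $f(X)=d^p(X,\d\CM)$). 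This is not a removable technicality in your construction: since you only have $\E N_t\le C$ at your disposal, making the probabilities of the events $\{\mu_t^\eps(A_{1/k})>M_k\}$ summable over $k$ forces $M_k\to\infty$, and a set $K_\delta$ with growing $M_k$ contains exactly such sequences, hence is not relatively compact. (A secondary point: you state the support constraint on $[-R_k,R_k]^2$ but only ever bound the $x$-coordinate; the $v$-coordinate is never addressed.)

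The repair is to control the \emph{total} number of particles rather than only the mass away from the boundary, and this is what the paper does. Proposition~\ref{prop:numPart} gives $\sup_{t\le 1,\,\eps\le\eps_0}\E_{X_0}N_t<\infty$, so by Markov's inequality $\mu_t^\eps(\CM)\le m$ with probability at least $1-C/m$, uniformly; the compact sets are then of the form $K_{n,m,R}=\{\eta\,:\,\eta(\CM)\le m,\ \eta(\CM\setminus\CM^{[n]})=0,\ \eta(\CM\setminus B_R)=0\}$ with $B_R=\{|x|\vee|v|\le R\}$. With at most $m$ atoms in total, the particles within distance $\eta$ of $\d\CM$ contribute at most $m\eta^p$ to any $\|\cdot\|_p$-distance, which is precisely what makes these sets compact, and the bound $\|\mu\|_p\lesssim mR^p$ comes for free — so the estimate of $\E\|\mu_t^\eps\|_p$ that you treat as the ``main obstacle'' is not needed at all (and as sketched it is only heuristic, e.g.\ the uniformity in $\eps$ of the conditioned-excursion height estimate near $t-\s\sim\eps$ is not justified). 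Your bounds for the generation index via \eref{e:expOffspring} and for the $x$-coordinate via Proposition~\ref{prop:moments} and Chebyshev are exactly those used in the paper; what is missing is the $v$-coordinate, for which one uses that $v>-ax$ always, that tags do not increase from parent to child, and that each newly created tag equals $-a$ times a position visited by the parent, so that $|v|$ is controlled by $|x|$, the initial tag and $\sup_{s\le 1}|x^0_s|$, whence $\P_{X_0}(\mu_t^\eps\notin K_R^v)\le C/R^2$ by the moment bounds for the random walk.
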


\begin{proof}
For any $n\in \Z_+$ and $R  \in \R_+$, denote
\begin{equ}
 \CM^{[n]} = \{(x,v,\ell) \in \CM\,:\, \ell \le n\}\;,\quad
B_R = \{(x,v,\ell) \in \CM\,:\, |x| \vee |v| \le R\}\;.
\end{equ}
For any such $n$ and $R$ and for $m \in \Z_+$, we then denote by $K_{n,m,R} \subset \CX$ the set of all
integer-valued measures $\eta$ on $\CM$ such that $\eta(\CM \setminus  \CM^{[n]}) = 0$, $\eta(\CM) \le m$, and $\eta(\CM \setminus B_R) = 0$.
Since these sets are obviously compact, it suffices
to find, for every $\delta > 0$, sufficiently large values $n$, $R$ and $m$ so that $\P_{X_0}(\mu_t^\eps \in K_{n,m,R}) > 1-\delta$
uniformly over all $\eps \le \eps_0$ and $t \in [0,1]$.

Note that $K_{n,m,R} \subset K_n^1 \cap K_m^2 \cap K_R^x \cap K_R^v$ where, for
$K_n^1$ and $K_m^2$ we only enforce the conditions involving $n$ and $m$ respectively. The set
$K_R^x$ consists of configurations such that the $x$-coordinate of every particle is less than $R$
in absolute value, while $K_R^v$ enforces that the $v$-coordinate be less than $R$. 

It follows immediately from \eref{e:expOffspring} that there exists $\gamma>0$ and a constant $C$ (depending in principle on the
 time we consider, but it can be chosen uniformly over $t \in [0,1]$), such that
\begin{equ}
\P_{X_0}(\mu_t^\eps  \not\in K_n^1) \le C e^{-\gamma n}\;,
\end{equ}
for every $n \ge 0$. Similarly, it follows from the moment bounds on $N_t$ obtained in Proposition~\ref{prop:numPart} that, for every $p>0$, there exists $C$ such that
\begin{equ}
\P_{X_0}(\mu_t^\eps  \not\in K_m^2) \le {C\over m^p}\;.
\end{equ}
In order to get a bound on the $x$-coordinate of the particles, we 
combine Proposition~\ref{prop:moments}
with Chebyshev's inequality, so that
\begin{equ}
\P_{X_0}(\mu_t^\eps  \not\in K_R^x) \le {C\over R^2}\;.
\end{equ}
It remains to obtain a bound on the probability of not being in $K_R^v$. For this, we use the fact that 
on the one hand, the label of a particle always satisfies $v > -ax$. On the other hand, 
any descendant of the initial particle will always satisfy $v \le -a\sup_{t \le 1} x^{0}_t$, where here we denote by $x^{0}_t$
the position of the original particle at time $t$. Since this particle was assumed to start at the origin, 
we obtain $v^2 \le a^2 x^2 +  a^2\sup_{t \le 1} (x^0_t)^2$, so that we obtain the bound
\begin{equ}
\P_{X_0}(\mu_t^\eps  \not\in K_R^v) \le {C\over R^2}\;,
\end{equ}
just as above. Combining all of these bounds, the claim follows by choosing
$n$, $R$ and $m$ large enough.
\end{proof}

\subsection{Kolmogorov criterion}
\label{sec:Kolmo}

The aim of this section is to obtain the following bound on the time regularity of our process:

\begin{proposition}\label{prop:Kolmogorov}
For every $p \le 1$ and $q \ge 1$, there exists a constant $C$ such that
\begin{equ}[e:KolmoBound]
\E_{X_0} \|\mu_\delta^\eps- \delta_{X_0}\|_p^q \le C\delta^{p q/2}\;,
\end{equ}
where $X_0\in \CM^0$ is an initial condition with only one particle in the system, and $\delta < 1$ is an integer multiple of $\eps$.
\end{proposition}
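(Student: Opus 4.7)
The plan is to exhibit an explicit transport plan between $\mu_\delta^\eps$ and $\delta_{X_0}$ and to bound each piece using the moment estimates of Section~\ref{sec:boundsNPart}. Label the particles alive at time $\delta$ as $X_\delta^{(0)}, X_\delta^{(1)}, \ldots$, with the convention that $X_\delta^{(0)}$ is the original particle (present only if its lifetime $\tau$ exceeds $\delta$). By the Monge--Kantorovich characterization it is natural to transport the ancestor (if alive) to $X_0$ and each offspring to the boundary $\d\CM$, or, if the ancestor is already dead, to transport $X_0$ itself to $\d\CM$. This yields the pointwise estimate
\begin{equ}
\|\mu_\delta^\eps - \delta_{X_0}\|_p \le \one_{\tau > \delta}|X_\delta^{(0)} - X_0|^p + \one_{\tau \le \delta}\,d^p(X_0,\d\CM) + \sum_{i \ge 1}d^p(X_\delta^{(i)}, \d\CM)\;,
\end{equ}
and raising to the $q$-th power and using $(a+b+c)^q \le 3^{q-1}(a^q+b^q+c^q)$ reduces the proof to bounding the $q$-th moment of each term by a constant times $\delta^{pq/2}$.

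The first term is controlled by the standard $pq$-th moment bound for the displacement of the random walk \eref{e:RW}, giving $\E|y_\delta - y_0|^{pq} \le C\delta^{pq/2}$ thanks to the exponential integrability of $\nu$. For the second term, set $d_0 = d(X_0,\d\CM)$: the case $d_0 \le \sqrt\delta$ is immediate, while for $d_0 > \sqrt\delta$ a gambler's ruin estimate gives $\P(\tau \le \delta) \le C\exp(-cd_0^2/\delta)$, so that $d_0^{pq}\P(\tau \le \delta) \lesssim \delta^{pq/2}$ in both cases.

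The main obstacle is the offspring term $C_\delta := \sum_{i\ge 1}d^p(X_\delta^{(i)}, \d\CM)$. The key inputs are: (i) every offspring is born at distance at most $a\sqrt\eps\xi_{k+1}^+$ from $\d\CM$, where $\sqrt\eps\xi_{k+1}^+$ is the ancestor's upward step at birth, and the ticketed mechanism makes the initial distance of the offspring to its own killing barrier uniform on $[0,a\sqrt\eps\xi_{k+1}^+]$; (ii) the expected number of offspring at step $k$ is $\E M_k \le Ca\sqrt\eps\xi_{k+1}^+$ by Lemma~\ref{lem:unif}; and (iii) by the gambler's ruin bound \eref{e:boundSurv}, such a descendant born at distance $d_0$ from its barrier has probability $O(d_0/\sqrt{\delta - k\eps})$ of surviving to time $\delta$, with its distance to $\d\CM$ at that time of order $\sqrt{\delta - k\eps}$ on survival. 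A direct computation combining these yields
\begin{equ}
\E C_\delta \lesssim \sum_{k\eps \le \delta}\eps\,(\delta-k\eps)^{(p-1)/2} \lesssim \delta^{(p+1)/2} \le \delta^{p/2}\;,
\end{equ}
settling the case $q = 1$.

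For $q \ge 2$ the essential structural fact is that, conditional on the ancestor's trajectory and the tag assignments, the offspring sub-trees are independent. I would then follow the cumulant strategy of Proposition~\ref{prop:numPart}: apply the moment-cumulant bound \eref{e:boundMoments} to reduce to controlling conditional cumulants, use $(\sum a_i)^q \le N^{q-1}\sum a_i^q$ together with Cauchy--Schwarz to split off the sub-tree counting factor, and close the recursion using the uniform moment bounds on $N_\delta$ from Proposition~\ref{prop:numPart} and the analogue of the estimate above applied with exponent $pq$ in place of $p$. The hardest part is ensuring that the higher cumulants do not compound badly across generations, but the factor $\sqrt\eps$ per branching event together with the geometric decay $2^{-n}$ of the $n$-th generation from \eref{e:expOffspring} provide enough smallness for the recursion to close uniformly in $\eps$.
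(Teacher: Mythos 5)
There is a genuine gap: the heart of the statement is the $q$-th moment of the offspring contribution $\sum_{i\ge 1} d_p(X_\delta^{(i)},\d\CM)$ for \emph{arbitrary} $q\ge 1$ and across \emph{all} generations, and your proposal does not actually establish it. Your explicit computation treats only first-generation offspring (births off the ancestor's upward steps); descendants of descendants are not covered by it, and for $q\ge 2$ you defer everything to a cumulant recursion ``\`a la Proposition~\ref{prop:numPart}'' for distance-weighted particle counts, whose closure across generations you yourself identify as the hardest part and do not carry out. That recursion would require new inputs (survival-weighted moment bounds of the form $\E\bigl[d_p^{q}(X_\delta,\d\CM)\one_{\mathrm{alive}}\bigr]$ for a particle started at distance $\sim\sqrt\eps$ from its barrier, propagated through the generation structure), so as written the proof is an outline rather than an argument. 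A second, minor, issue: the Gaussian tail $\P(\tau\le\delta)\le C\exp(-cd_0^2/\delta)$ is not available when $\nu$ has only exponential moments (one gets a Bernstein-type bound); in fact no tail estimate is needed, since on $\{\tau\le\delta\}$ one has $d_0\le C\sup_{s\le\delta}|x_s-x_0|$, so this term is dominated by the sup-displacement moment.

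The paper closes the offspring term much more cheaply, and this is the idea you are missing. First, by \eref{e:Holder} and Cauchy--Schwarz,
\begin{equ}
\E\Bigl(\sum_{j\ge 2} d_p(X_\delta^{(j)},\d\CM)\Bigr)^q \le \Bigl(\E (N_\delta-1)^{2q-1}\Bigr)^{1/2}\Bigl(\E\sum_{j\ge 2} d_p^{2q}(X_\delta^{(j)},\d\CM)\Bigr)^{1/2}\;,
\end{equ}
so the moment bounds on $N_\delta$ from Proposition~\ref{prop:numPart} reduce everything to a \emph{first} moment, with no recursion in $q$ or in the generation number. Second, a pathwise geometric observation handles all generations at once: every descendant's killing barrier $-v/a$ lies between the ancestor's running minimum $\inf_{s\le\delta}x_s$ and the descendant's current position $x$ (barriers are created inside the parent's step and parents stay above their own barriers while alive), whence $d_p(X_\delta^{(j)},\d\CM)=|x-v/a|^p\le |x-x_0|^p+\sup_{s\le\delta}|x_s-x_0|^p$. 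Therefore
\begin{equ}
\E\sum_{j\ge 2} d_p^{2q}(X_\delta^{(j)},\d\CM) \lesssim \E\Bigl(N_\delta \sup_{s\le\delta}|x_s-x_0|^{2pq}\Bigr) + \E\int |x-x_0|^{2pq}\,\mu_\delta^\eps(dx,dv,dk) \lesssim \delta^{pq}\;,
\end{equ}
by Propositions~\ref{prop:numPart} and~\ref{prop:moments} together with standard bounds on the supremum of the walk, giving \eref{e:KolmoBound} directly. If you want to salvage your route, you would have to prove the generation-indexed, survival-weighted analogues of \eref{e:boundSurv} and close the cumulant recursion uniformly in $\eps$; it is far simpler to replace that whole step by the barrier-versus-running-minimum bound above.
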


\begin{remark}
As usual, the precise location of the 
particle is irrelevant by translation invariance, so the above bound is uniform over all choices of $X_0$.
\end{remark}

Before we proceed to the proof of Proposition~\ref{prop:Kolmogorov}, we observe that the bound \eref{e:KolmoBound}
implies that Kolmogorov's criterion holds for the process over a fixed interval of time:

\begin{corollary}\label{cor:Kolmogorov}
For every $p \le 1$ and $q \ge 1$,
one has the bound
\begin{equ}[e:KolmoBound2]
\E_{X_0} \|\mu_{t+\delta}^\eps- \mu_t^\eps\|_p^q \le C\delta^{p q/2}\;,
\end{equ}
uniformly over all $\delta, t, \eps \in [0,1]$.
\end{corollary}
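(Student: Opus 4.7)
The plan is to combine the Markov property of Algorithm~\ref{tdmc} with Proposition~\ref{prop:Kolmogorov} and the moment bound of Proposition~\ref{prop:numPart}. First I would reduce to the case that both $t$ and $t+\delta$ lie in $\eps\mathbb{Z}$, and then handle non-lattice times via the linear-interpolation inequality \eref{e:boundInter}.

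For the lattice case, condition on the natural filtration $\FF_t$ up to time $t$ and write $\mu_t^\eps = \sum_{j=1}^{N_t}\delta_{X_t^{(j)}}$ with $X_t^{(j)} = (x_j, v_j, n_j) \in \CM$. Since the tag $v$ is preserved by the dynamics, the triple $(x,v,n)$ is the complete state of a single particle, and the $\ell^p(\CM)$-valued empirical measure is genuinely Markov. The independence of the branching then gives the ancestor decomposition
\begin{equation*}
\mu_{t+\delta}^\eps = \sum_{j=1}^{N_t}\mu_{t+\delta}^{\eps,(j)}\;,
\end{equation*}
where $\mu_{t+\delta}^{\eps,(j)}$ collects all descendants alive at time $t+\delta$ of the $j$th particle present at time $t$. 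Conditionally on $\FF_t$, the $N_t$ summands are independent and each is distributed as $\mu_\delta^\eps$ under $\E_{X_t^{(j)}}$. Using the subadditivity of the Wasserstein-like distance under disjoint superposition followed by the elementary inequality $(\sum_{j=1}^N a_j)^q \le N^{q-1}\sum_j a_j^q$,
\begin{equation*}
\|\mu_{t+\delta}^\eps - \mu_t^\eps\|_p^q \le N_t^{q-1}\sum_{j=1}^{N_t}\|\mu_{t+\delta}^{\eps,(j)} - \delta_{X_t^{(j)}}\|_p^q\;.
\end{equation*}
Applying Proposition~\ref{prop:Kolmogorov} to each conditional expectation yields $\E(\,\cdot\,|\,\FF_t) \le C\delta^{pq/2}\,N_t^q$, and then the unconditional expectation is controlled by $\sup_{t\le 1,\,\eps\le\eps_0}\E N_t^q < \infty$ from Proposition~\ref{prop:numPart}.

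For non-lattice times, set $t_- = \eps\lceil t/\eps\rceil$ and $t_+ = \eps\lfloor(t+\delta)/\eps\rfloor$. When $t_- \le t_+$ the triangle inequality splits the increment into the lattice portion $[t_-, t_+]$ and two endpieces lying in a single $\eps$-interval each. On such an interval the interpolation inequality \eref{e:boundInter} gives $\|\mu_s^\eps - \mu_{s'}^\eps\|_p \le (|s-s'|/\eps)^p\|\mu_{k\eps}^\eps - \mu_{(k+1)\eps}^\eps\|_p$; combined with the one-step lattice bound $\E\|\mu_{(k+1)\eps}^\eps - \mu_{k\eps}^\eps\|_p^q \le C\eps^{pq/2}$ already proved, and using $p\le 1$ and $|s-s'|\le \delta$, each endpiece contributes at most $C\delta^{pq/2}$ to the $q$th moment. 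The case $t_- > t_+$ (both $t$ and $t+\delta$ in the same $\eps$-interval) is handled directly by \eref{e:boundInter}.

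I do not anticipate a serious obstacle; the argument is essentially bookkeeping around the Markov decomposition of the post-$t$ process. The one point worth flagging is that Proposition~\ref{prop:Kolmogorov} is stated for initial conditions in $\CM^0$, whereas the particles we restart from at time $t$ live in arbitrary generations $n_j$ and carry arbitrary finite tags $v_j > -ax_j$. Both extensions are automatic: the tag $v_j$ is already permitted to be any admissible value in the statement of Proposition~\ref{prop:Kolmogorov}, while the generation index enters neither the dynamics nor the metric $\|\cdot\|_p$ on $\ell^p(\CM)$ beyond its \emph{relative} value (two particles at the same generation are compared via the Euclidean distance on $\R^2$, otherwise via the sum of their distances to $\d\CM$), so a uniform shift of all generation labels changes nothing on either side of the inequality.
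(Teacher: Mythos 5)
Your proposal is correct and follows essentially the same route as the paper: reduce to lattice times via the interpolation bound \eref{e:boundInter}, then condition on $\CF_t$, decompose by ancestors at time $t$, apply the inequality \eref{e:Holder} together with Proposition~\ref{prop:Kolmogorov} conditionally, and close with the moment bound on $N_t$ from Proposition~\ref{prop:numPart}. The only difference is that you spell out the endpiece bookkeeping and the generation-label remark explicitly, which the paper leaves implicit.
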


\begin{proof}
Note first that we can restrict ourselves to the case when $t$ and $\delta$ are integer multiples of $\eps$. Indeed, 
it follows from the definition of $\|\cdot\|_p$ and from \eref{e:boundInter} that, if $k\eps \le s < t \le (k+1)\eps$,
then
\begin{equ}
\|\mu_{s}^\eps- \mu_t^\eps\| \le \eps^{-p} |t-s|^p \|\mu_{k\eps}^\eps- \mu_{(k+1)\eps}^\eps\|\;.
\end{equ}

We then obtain from Proposition~\ref{prop:Kolmogorov} the bound
\begin{equs}
\E_{X_0} \|\mu_{t+\delta}^\eps- \mu_t^\eps\|_p^q &= \E_{X_0} \Bigl(\E_{X_0} \bigl( \|\mu^\eps_{t+\delta}- \mu^\eps_t\|_p^q\,|\, \CF_t\bigr)\Bigr) \\
&\le \E_{X_0} \Bigl(|N_t|^{q-1}\sum_{j=1}^{N_t}\E_{X_t^{(j)}} \|\mu_\delta^\eps- \delta_{X_t^{(j)}}\|_p^q\Bigr) \\
&\le C \delta^{pq/2} \E_{X_0} \bigl(|N_t|^{q}\bigr) \le C \delta^{pq/2}\;,
\end{equs}
where we made use of the Markov property, \eref{e:Holder}, and \eref{e:KolmoBound}.
\end{proof}

\begin{proof}[of Proposition~\ref{prop:Kolmogorov}]
Denote by $X_t$ the location at time $t$ of a single particle starting at $X_0 = (x_0,v_0,0)$ and evolving under the
rescaled random walk stopped when it reaches the boundary of $\CM$.   Denote by $x_t$ the position in $\R$ corresponding to $X_t$.
From the properties of the random walk and the definition of $\|\cdot\|_p$, 
for every $q>1$ there exists a constant $C$ such that
the bound
\begin{equ}
\E_{X_0} \|\delta_{X_\delta}- \delta_{X_0}\|_p^q \le C\delta^{qp/2}\;,
\end{equ}
holds, independently of the initial condition and independently of $\eps \le 1$.

Let us now bound the contribution from the descendants of the initial particle.
Ordering the particles alive at time $t$ in such a way that the original particle has label $1$
(if it is not alive anymore, we consider it as being located on the boundary, where it was stopped), we
have the bound
\begin{equ}
\|\delta_{X_\delta}- \mu_\delta^\eps\|_p \le \sum_{j = 2}^{N_{\delta}} d_p(X_\delta^{(j)}, \d \CM)\;,
\end{equ}
so that
\begin{equs}
\E_{X_0} \|\delta_{X_\delta}- \mu_\delta^\eps\|_p^q &\le \E_{X_0} \Bigl(\sum_{j = 2}^{N_{\delta}} d_p(X_\delta^{(j)}, \d \CM)\Bigr)^q \\
&\le \biggl(\Bigl(\E_{X_0} (N_{\delta}-1)^{2q-1}\Bigr) \Bigl(\E_{X_0} \sum_{j = 2}^{N_{\delta}} d_p^{2q}(X_\delta^{(j)}, \d \CM)\Bigr)\biggr)^{1/2} \\
&\le  C\Bigl(\E_{X_0} \sum_{j = 2}^{N_{\delta}} d_p^{2q}(X_\delta^{(j)}, \d \CM)\Bigr)^{1/2} \;,
\end{equs}
where the second inequality follows from Proposition~\ref{prop:numPart}.
Recall now that if $X_\delta^{(j)} = (x,v,n)$, then one has $d_p(X_\delta^{(j)}, \d \CM) = 
|x-v/a|^p$, where $v/a$ is guaranteed to take values between $\inf_{s \le \delta} x_s$ and $x$.
As a consequence, we have the bound
\begin{equ}
|x-v/a|^p \le |x - \inf_{s \le \delta} x_s|^p \le |x-x_0|^p +  \sup_{s \le \delta} |x_s-x_0|^p\;,
\end{equ}
so that 
\begin{equs}
\E_{X_0} \sum_{j = 2}^{N_{\delta}} d_p^{2q}(X_\delta^{(j)}, \d \CM)
&\lesssim \E_{X_0}\Bigl( N_{\delta} \sup_{s \le \delta} |x_s-x_0|^{2pq}\Bigr)\\
&\quad + \E_{X_0} \int |x-x_0|^{2pq} \mu_\delta^\eps(dx,dv,dk) \lesssim \delta^{pq}\;,
\end{equs}
where the last bound is a consequence of Propositions \ref{prop:numPart} and \ref{prop:moments},
as well as standard bounds on the supremum of a random walk.
The claim now follows at once.
\end{proof}

\section{Convergence of fixed-time distributions}
\label{sec:convFan}

In this section, we show that any limiting process obtained from the tightness result of the previous section
necessarily coincides with the Brownian fan constructed in Section~\ref{sec:defFanMP}. 
With the notations of that section at hand, our convergence result can be formulated as follows.

\begin{theorem}\label{theo:finalConv}
Consider the setting of Theorem~\ref{theo:tight} with an initial condition $X_0 = (x,v,0) \in \CM$,
and consider $\mu_t$ as above with the ``initial condition'' for $\mu^{[\infty]}$
given by a Brownian motion starting at $x$, killed when it reaches the level $-v/a$.

Then, for every $p \in (0,1]$, there exists a version of the process $\{\mu_t\}_{t \ge 0}$ which is a 
continuous Markov process with
values in $\ell^p(\CM)$. Furthermore, denoting the law of its restriction to the time interval $[0,1]$ by $\CL$, 
the sequence of measures $\CL^\eps$ converges weakly to $\CL$ in $\CC([0,1], \ell^p(\CM))$.
\end{theorem}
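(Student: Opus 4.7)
My plan is to combine the tightness result of Theorem~\ref{theo:tight} with an identification of all weak subsequential limits as the law of the Brownian fan. More precisely, since tightness in $\CC([0,1],\ell^p(\CM))$ implies that every subsequence has a further subsequence converging weakly to some limit $\CL^\star$, it suffices to show that the one-time marginals $\mu_t^\eps$ converge in law to $\mu_t$ for each $t \in [0,1]$. Combined with the Markov property of the approximating process and the Feller property of the Brownian fan established in Proposition~\ref{prop:feller}, this will promote one-time convergence to convergence of all finite-dimensional distributions and hence pin down $\CL^\star = \CL$ uniquely. The continuity of sample paths of $\mu$ follows by passing to the limit in the Kolmogorov bound of Corollary~\ref{cor:Kolmogorov}, and the strong Markov property is then a standard consequence of Feller plus continuity.

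To prove convergence of $\mu_t^\eps$ to $\mu_t$ in $\ell^p(\CM)$, the natural approach is to proceed by induction on generation number. Both measures decompose as $\mu_t^\eps = \sum_{n \ge 0} \mu_t^{\eps,n}$ and $\mu_t = \sum_{n \ge 0} \mu_t^n$, and by the geometric tail estimate \eref{e:expOffspring} together with the analogous bound coming out of the proof of Theorem~\ref{theo:Nt}, the contribution of generations $n \ge N$ to the $\ell^p$-norm is uniformly small in $\eps$ as $N \to \infty$. The generation-$0$ case reduces to the classical invariance principle for a killed random walk. For the inductive step, conditionally on generation $n-1$, generation $n$ is obtained as a Poisson superposition along each generation-$(n-1)$ trajectory, with intensity $\CQ^\eps(w,\cdot)$ in the discrete case and $\CQ(w,\cdot)$ in the continuous case. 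One then truncates at a small parameter $\delta > 0$: offspring whose distance from $\d\CM$ at time $t$ is at least $\delta$ are handled by a direct convergence argument at the level of intensity measures, while the contribution of offspring remaining at distance less than $\delta$ from $\d\CM$ is dominated by the quantity $R^\delta_t$ from Proposition~\ref{prop:numPart}, which is $\CO(\delta)$ uniformly in $\eps$, together with the corresponding bound for the Brownian fan. Letting $\delta \to 0$ at the end closes the induction.

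The main obstacle, and the most technical step, is the ``convergence of intensity measures'' itself. What is required is that for sufficiently nice path-functionals $\Phi$ supported on excursions whose position at time $t$ is bounded away from $\d \CM$, $\sqrt\eps\, \CQ^\eps(w,\Phi)$ converges to $\CQ(\tilde w, \Phi)$ whenever $w$ is close to a continuous path $\tilde w$. In view of \eref{e:discintMeasure} and \eref{e:defHairy}, this reduces to an invariance principle for the rescaled random walk started at $\sqrt\eps z$ and killed upon reaching the origin, conditioned to survive for a time of order one: the claim is that such an excursion converges in law to an It\^o Brownian excursion, and that the effective rate with which such long-surviving excursions are produced is given by the function $G(z)$ of Proposition~\ref{prop:defG}. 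The normalisation identity of Proposition~\ref{prop:expG} then produces the prefactor $a/2$ in \eref{e:defHairy} independently of the step distribution $\nu$, in agreement with the heuristic of Section~\ref{sec:BMformal}. This excursion invariance principle is delicate because convergence fails at the level of short excursions and because the initial overshoot at the first return to zero is of order $\sqrt\eps$ rather than zero; it is proved by coupling the random walk to a Brownian motion using the quantitative estimate of \cite{ConvRateBM} and separately controlling the very early part of the excursion. This, rather than anything conceptual, is where the bulk of the work lies.
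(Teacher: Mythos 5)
Your overall strategy coincides with the paper's: tightness from Theorem~\ref{theo:tight} plus convergence of finite-dimensional distributions, reduction (via the Markov property, superposition, and Proposition~\ref{prop:numPart}) to a single time and a single initial particle, truncation in the generation number and in the size of excursions, and an invariance principle for the random-walk excursion conditioned to reach a fixed height, with Propositions~\ref{prop:defG} and~\ref{prop:expG} producing the factor $a/2$. Two points in your inductive step, however, do not work as written.

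First, the discrete-time generations are \emph{not} Poisson superpositions. Conditionally on a parent trajectory $w$, the number of children produced at step $k$ has the law $\CI(\exp(a\Delta w_k)-1)$ dictated by \eref{e:noff}, not a Poisson law, so the approximating object is not a recursive Poisson point process and a comparison ``at the level of intensity measures'' is not directly available. The paper must insert a Poissonization step: couple $\CI(\lambda)$ with a Poisson variable of mean $\lambda$ at total-variation cost $O(\lambda^2\wedge 1)$, sum over the $O(\eps^{-1})$ steps of a typical trajectory (each with $\lambda = O(\sqrt\eps)$), and thereby couple the truncated process with a genuine recursive Poisson point process $\tilde \mu^{\eps,[n]}_\gamma$ up to an event of probability $O(\sqrt\eps)$, cf.\ \eref{e:boundTV}. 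Without this (or an equivalent device) your induction cannot be run.

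Second, the ``direct convergence argument'' in the inductive step conceals where much of the remaining work lies. The kernel $\CQ^\eps_\gamma(w,\cdot)$ is close to the limiting kernel $\CQ_\gamma(w,\cdot)$ only for \emph{typical} parent paths $w$ (no abnormally large increments, empirical measure $\J^\eps(w)$ of the rescaled increments close to its mean), and the parents in generation $n$ are random in number and law, so pointwise convergence of intensities does not propagate through the induction by itself. The paper formalises this step in Theorem~\ref{theo:convPPP}, which requires (i) uniform boundedness and Lipschitz continuity of $w\mapsto \CQ_\gamma(w,\cdot)$ --- this is what forces the modified time metric \eref{e:funnyD}, since $t\mapsto \Theta_{w,t}^\star\Q_\gamma$ is only H\"older in the usual metric --- (ii) uniform closeness of the two kernels on a good set $\Omega_\eps$, and (iii) a proof that the approximating process charges $\Omega_\eps^{\text{c}}$ only with probability $o(1)$, which rests on the coarse-graining estimate for $\J^\eps$ in Proposition~\ref{prop:convFD}. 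Finally, note a small mismatch in your truncation: the offspring at distance less than $\delta$ from $\d\CM$ \emph{at time $t$} are not controlled by $R^\delta_t$, which counts particles that have \emph{never} exceeded distance $\delta$; the truncation should be by excursion height, as in the paper's restriction to $\CE_\gamma$, with the near-boundary particles at time $t$ handled trivially through the factor $d^p(\cdot,\d\CM)$ in the $\ell^p$ norm together with the uniform moment bounds on $N_t$.
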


\begin{proof}
It suffices to show that, for any fixed collection of times $\{t_1,\ldots, t_k\}$, the law of $\{\mu_{t_i}^\eps\}_{i \le k}$
converges weakly to that of $\{\mu_{t_i}\}_{i \le k}$. Indeed, Corollary~\ref{cor:Kolmogorov} then  implies that
the process $\mu_t$ satisfies Kolmogorov's continuity criterion and therefore has a continuous version.
By Theorem~\ref{theo:tight}, we deduce weak convergence in $\CC([0,1], \ell^p(\CM))$ from the convergence 
of marginals. Using the Markov property, the superposition property of the process, and
Proposition~\ref{prop:numPart}, we reduce ourselves to the case $k=1$ with an initial condition consisting of one single
particle.

Denote now by $\mu^{\eps,[\infty]}$ the random integer-valued measure on $\CE$ obtained 
by running Algorithm~\ref{tdmc} until time $1$. 
Observe that $\mu^{\eps,[\infty]}$ can be built in the following way. For an excursion $w \in \CE$,
we build a random measure $\QQ^\eps(w)$ by the following procedure. For every $k \in \N$ with $\eps k > \s(w)$
and $\eps(k+1) < \e(w) \wedge 1$ we set 
\begin{equ}
\Delta w_k = w_{(k+1)\eps}-w_{k\eps}\;.
\end{equ}
If $\Delta w_k > 0$, we then draw a random variable $N^1_{k}$ with law $\CI(\exp(a\Delta w_k)-1)$
(see \eref{e:noff} and Lemma~\ref{lem:unif}). For $j=1,\ldots, N^1_{k}$, we build i.i.d.\ excursions $w^{k,j} \in \CE$
by the following procedure. First, draw a uniform random variable $u \sim \CU(\exp(-a \Delta w_k),1)$
and set $v = \log u -a w_{k\eps}$. Then, denote by $\{y^{k,j}_{\ell\eps}\}_{\ell=0}^L$ an instance of the random walk 
\eref{e:RW}, started at $w_{(k+1)\eps}$ and stopped just before it becomes smaller than $-v/a$ 
(so that $y^{k,j}_{L\eps} > -v/a$).
The excursion $w^{k,j}$ is then given by $\s(w^{k,j}) = k\eps$, $\e(w^{k,j}) = (L+k+2)\eps$, 
$w^{k,j}_{\eps (\ell+k+1)} = y^{k,j}_{\ell\eps}$ for $\ell \in \{0,\ldots,L\}$,
$w^{k,j}(\ell \eps) = -v/a$ for the remaining integer values of $\ell$, and linear interpolation 
in between integer values. We then set
\begin{equ}
\QQ^\eps(w) = \sum_{k=1}^\infty \sum_{j=1}^{N^1_k} \delta_{w^{k,j}}\;,
\end{equ}
which is the point measure describing the children of the particle with trajectory $w$.
Similarly to before, we build $\mu^{\eps,[\infty]}$ recursively by the following procedure:
\begin{claim}
\item Build an excursion $w^0 \in \CE$ as above, starting at $x$ and stopped at $-v/a$, where $(x,v,0)\in \CM$
is the initial condition appearing in the statement. Set $\mu^{\eps,0} = \delta_{w^0}$.
\item Given $\mu^{\eps,\ell}$, define $\mu^{\eps,\ell+1}$ by
\begin{equ}
\mu^{\eps,\ell+1} = \int_\CE \QQ^\eps(w)\,\mu^{\eps,\ell}(dw)\;,
\end{equ} 
where the $\{\QQ^\eps(w)\}_{w \in \CE}$ are all independent (and independent of the $\mu^{\eps,\ell'}$ with
$\ell' \le \ell$). Note that the integral is actually a finite sum, so the construction makes sense.
\item Set $\mu^{\eps,[\ell]} = \sum_{\ell'=0}^\ell \mu^{\eps,\ell'}$ for positive $\ell$ (including the case $\ell = \infty$).
\end{claim}
If we set
\begin{equ}
\mu_t^\eps = E_t^\star \mu^{\eps,[\infty]}\;,
\end{equ}
where $E_t$ is defined as in Section~\ref{sec:defFanMP},
then the process $\mu_t^\eps$ is indeed equal in law to the process considered in 
Section~\ref{sec:tightness}\footnote{Strictly speaking, the two processes agree only 
at times that are multiples of $\eps$ since the two
interpolation procedures we used may differ when the trajectories of 
two particles from the same generation cross each other. This is irrelevant.}.

Denote now by $\CE_\gamma$ the set of excursions of height at least $\gamma$, namely
\begin{equ}
\CE_\gamma = \{w \in \CE\,:\, \exists t \in \l(w) \;\text{with}\; w_t \ge w(\s(w)) + \gamma\}\;.
\end{equ}
We also write $\mu^{\eps,[n]}_\gamma$ for the measure obtained exactly like $\mu^{\eps,[n]}$, 
but where we replace $\QQ^\eps(w)$ by its restriction $\QQ^\eps_\gamma(w)$ to the set $\CE_\gamma$ at every step, so that $\mu^{\eps,[n]}_\gamma \le \mu_\eps^{[\infty]}$
almost surely. In words, $\QQ^\eps_\gamma$ is obtained from $\QQ^\eps$ by discarding all excursions of
height less than $\gamma$, as well as the descendants of any such excursions.

Combining Propositions~\ref{prop:numPart} and \ref{prop:moments}, we see that, 
for $\eps_0$ small enough, there exist constants $C$ and $\alpha > 0$ such that
one has the bound
\begin{equ}[e:boundUnif]
\sup_{\eps \le \eps_0} \P \bigl(E_t^\star\mu^{\eps,[\infty]} \neq E_t^\star\mu^{\eps,[n]}_\gamma\bigr) \le C \bigl(e^{-\alpha n} + \gamma^p\bigr)\;,
\end{equ}
uniformly over $\eps$ and $t \in [0,1]$.

Following an argument along the lines of the proof of Theorem~\ref{theo:Nt}, 
a similar bound can be shown to hold for 
$\P \bigl(E_t^\star\mu^{[\infty]} \neq E_t^\star\mu^{[n]}_\gamma\bigr)$,
where $\mu^{[n]}_\gamma$ is the recursive Poisson process of depth $n$ constructed like $\mu^{[\infty]}$, 
but with $\CQ(w,\cdot)$ replaced by its restriction $\CQ_\gamma(w,\cdot)$ to $\CE_\gamma$.
As a consequence, we conclude that it is sufficient to show that, for every fixed $\gamma > 0$ and $n \ge 1$,
\begin{equ}[e:wantedConv]
\lim_{\eps \to 0} \CD(\mu^{\eps,[n]}_\gamma) = \CD(\mu^{[n]}_\gamma)\;,
\end{equ}
where $\CD(\cdot)$ denotes the law of a random variable and convergence is to be understood
in the sense of weak convergence on the space $\MM_+(\CE)$ endowed with the Wasserstein-$1$
distance (see \cite{Villani} and Section~\ref{sec:convRPP} below). 

Our aim then is to make use of Theorem~\ref{theo:convPPP} below which gives a general
convergence result for recursive Poisson point processes. The drawback is that 
$\mu^{\eps,[n]}_\gamma$ is itself not a recursive Poisson point process, due to the fact that $\QQ^\eps_\gamma(w)$ is not a
realisation of a Poisson point process. However, it would be one if, in the construction of $\QQ^\eps(w)$, we 
had drawn $N^1_k$ according to a Poisson distribution with mean $\exp(a\Delta w_k)-1$.
Denote by $\QQt(w)$ the Poisson point process obtained in this way, by $\QQtd$ its restriction to $\CE_\gamma$, and by 
$\tilde \mu^{\eps,[n]}_\gamma$ the recursive Poisson point process of depth $n$ obtained
by iterating $\QQtd$. We then claim that it is possible to find a coupling between $\tilde \mu^{\eps,[n]}_\gamma$
and $\mu^{\eps,[n]}_\gamma$ such that
\begin{equ}[e:boundTV]
\P \bigl(\tilde \mu^{\eps,[n]}_\gamma \neq \mu^{\eps,[n]}_\gamma\bigr) \le C_{n,\gamma} \sqrt \eps\;,
\end{equ}
where the constant $C_{n,\gamma}$ depends on $n$ and $\gamma$, but not on $\eps$. Indeed, if we denote by $U_\lambda$ a random variable with law $\CI(\lambda)$ and by $\bar U_\lambda$ a Poisson random variable with mean $\lambda$, then it is straightforward to check that there exists a constant $C$ and a coupling between $U_\lambda$ and $\bar U_\lambda$ such that
\begin{equ}
\P(|U_\lambda - \bar U_\lambda| = 1) \le C(1\wedge  \lambda^2)\;,\quad
\P(|U_\lambda - \bar U_\lambda| > 1) \le C (1\wedge \lambda^3)\;.
\end{equ}
Furthermore, by Proposition~\ref{prop:defG}, the probability that the random walk started at $x$ reaches $\gamma$ before becoming negative is bounded from 
above by $C_\gamma (x+\sqrt \eps)$ for some constant $C_\gamma$ depending on $\gamma$.
As a consequence, one can construct a coupling between $\QQ^\eps_\gamma(w)$ and $\QQtd(w)$ such that 
\begin{equ}[e:bd1]
\P \bigl(\QQ^\eps_\gamma(w) \neq \QQtd(w)\bigr) \le C_\gamma \sum_{k\eps \in [0,1]} \bigl|\Delta w_k\bigr|^2 \bigl(\bigl|\Delta w_k\bigr| + \sqrt \eps\bigr)\;.
\end{equ}
Similarly, regarding the total mass of $\QQ^\eps_\gamma$, one has the bound
\begin{equ}[e:bd2]
\E \QQ^\eps_\gamma(w,\CE) = \E \QQ^\eps(w,\CE_\gamma) \le C_\gamma  \sum_{k\eps \in [0,1]} \bigl|\Delta w_k\bigr|^2 e^{c |\Delta w_k|}\;, 
\end{equ}
for some constant $c$. If $w$ is an excursion created by the procedure above, the expected values of 
\eref{e:bd1} and \eref{e:bd2} are bounded by $C_\gamma \sqrt \eps$ and $C_\gamma$ respectively, for a possibly
different constant depending on $\gamma$. Proceeding as in Lemma~\ref{lem:intPPP}, it follows that, if we set
\begin{equ}
F_\eps(w) = 1 + {1\over \sqrt \eps}\sum_{k\eps \in [0,1]} \bigl|\Delta w_k\bigr|^3 + \sum_{k\eps \in [0,1]} \bigl|\Delta w_k\bigr|^2 e^{c |\Delta w_k|}\;,
\end{equ}
we obtain the bound $\E \int_\CE F_\eps(w) \mu^{\eps,[n]}_\gamma(dw) < C_{n,\gamma}$, uniformly over $\eps$.
(But this constant might potentially grow very fast as $\gamma \to 0$ and $n \to \infty$!)
Combining this with \eref{e:bd1}, the bound \eref{e:boundTV} then follows at once.

As a consequence, it is sufficient to show, instead of \eref{e:wantedConv}, that
\begin{equ}[e:convFD]
\lim_{\eps \to 0} \CD(\tilde \mu^{\eps,[n]}_\gamma) = \CD(\mu^{[n]}_\gamma)\;.
\end{equ}
This is the content of Proposition~\ref{prop:convFD} below, which completes the proof.
\end{proof}

The remainder of this section is devoted to the proof of \eref{e:convFD}.
The outline of the proof goes as follows. First, in Section~\ref{sec:convExc}, we show that the law of
a single excursion of the random walk \eref{e:RW}, conditioned on hitting a prescribed
level $\gamma$, converges as $\eps \to 0$ to the Brownian excursion, conditioned to reach $\gamma$.
In Section~\ref{sec:convRPP} we then provide a general criterion for the convergence of
recursive Poisson point processes. Finally, in Section~\ref{sec:convFD} we combine these results in
order to obtain \eref{e:convFD}.

\subsection{Convergence of excursion measures}
\label{sec:convExc}

As before, we denote by $y_t$ the rescaled random walk given by
\begin{equ}
y_{(k+1)\eps} = y_{k\eps}  + \sqrt \eps \xi_{k+1}\;,
\end{equ}
where  the $\xi_k$ are an i.i.d.\ sequence of random variables with law $\nu$. 
 As before, we extend this to arbitrary times 
by linear interpolation.
We also assume that $\nu$ has some exponential moment as before. The aim of this section is to show that if
we start $y_t$ at some initial condition $x \sim \sqrt \eps$ and condition it on reaching a
prescribed height $\gamma$ before becoming negative, then its law converges to that of an unnormalised
Brownian excursion, conditioned to reach height $\gamma$ (call it $w^\gamma$). Throughout this whole section, we will only consider 
the process on a fixed time interval, which for definiteness we choose to be equal to $[0,1]$.

A more precise description of the law $\Q_\gamma$ of $w^\gamma$ is given by the identity
\begin{equ}
\Q_\gamma(\,\cdot\,) = {\int_0^\infty s^{-3/2} g_\gamma(s) \Q_{s,\gamma}(\,\cdot\,)\,ds \over \int_0^\infty s^{-3/2} g_\gamma(s)\,ds}\;,
\end{equ}
where $\Q_{s,\gamma}$ denotes the law of a Brownian excursion of length $s$, conditioned to reach level $\gamma$, and 
\begin{equ}
g_\gamma(s) = \Q_{s,0}(\{w\,:\, \textstyle{\sup_{t \le s} w_t \ge \gamma}\}) = \Q(\{w\,:\, \textstyle{\sup_{t \le 1} w_t \ge \gamma/\sqrt s}\})\;,
\end{equ}
where $\Q$ is the standard It\^o excursion measure. Since $g_\gamma(s) \to 0$ exponentially fast for small $s$,
this does indeed define a probability measure on $\CC(\R_+, \R)$. We then turn this into a probability measure on $\CC([0,1],\R)$
by restriction.

We view $\CC([0,1],\R)$ as a subset of $\CE$ via the injection $\iota\colon \CC([0,1],\R)\hookrightarrow \CE$
given by
\begin{equ}
\s(\iota w) = 0\;,\qquad \e(\iota w) = 1 \wedge \inf\{t > 0\,:\, w(t) = 0\}\;,
\end{equ}
and by setting the path component of $\iota w$ equal to $w$, stopped when it reaches the time $\e(\iota w)$.
We endow $\CE$ as before with the distance $d$ given in \eref{e:distCE}. Since we only deal with excursions 
starting at $0$ and stopped before time $1$, the distance $d$ is equivalent on this set to the (pseudo-)distance
$\bar d$ given by
\begin{equ}
\bar d (w,w') = 1 \wedge \Bigl(|\e(w) - \e(w')| + \sup_{t \in [0,1]} |w_t - w'_t|\Bigr)\;.
\end{equ}

Regarding the space $\CC([0,1],\R)$, we endow it throughout this section with the metric
\begin{equ}
d(w,w') = 1 \wedge \sup_{t \in [0,1]} |w_t - w'_t|\;.
\end{equ}
We furthermore denote by $\|\cdot\|_d$ the Wasserstein-$1$ metric associated with any distance function $d$, which is 
just the dual of the corresponding Lipschitz (semi-)norm. The main theorem of this section is  the following:

\begin{theorem}\label{theo:convLaws}
Let $x > 0$ and denote by $\Q^{\eps}_{z,\gamma}$ the law of the random walk \eref{e:RW}, starting at $z = \sqrt \eps x$ and conditioned
to reach level $\gamma$ before becoming negative. Then, for every $\bar \beta < {1\over 16}$ there exists a constant $C$ such that 
\begin{equ}
\|\Q^{\eps}_{z,\gamma} - \Q_\gamma\|_d \le C \eps^{\bar \beta}\;,\qquad
\|\iota^\star\Q^{\eps}_{z,\gamma} - \iota^\star\Q_\gamma\|_{\bar d} \le C \eps^{2\bar \beta\over 5}\;,
\end{equ}
uniformly over all $\eps \in (0,1]$, all $x \in [0, \eps^{-1/3}]$, and all $\gamma \in [\eps^{1/16},1]$.
\end{theorem}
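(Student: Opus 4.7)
The plan is to use the KMT-type strong invariance principle from \cite{ConvRateBM} to couple the rescaled random walk $y$ with a Brownian motion $B$ started at $z=\sqrt\eps x$ on the same probability space, satisfying $\sup_{t\le 1}|y_t-B_t|\le \eps^{1/4-\delta'}$ outside of an event of probability $\eps^q$ for arbitrary $q$. On this good coupling event $\cG$, the two conditioning events ``hits $\gamma$ before $0$'' differ only when one of the paths passes within $\eps^{1/4-\delta'}$ of the opposite barrier, and the hitting-probability estimates from Proposition~\ref{prop:defG} and Corollary~\ref{cor:boundRW} provide the quantitative control of this discrepancy.

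First I would compute the unconditional probability $P^\eps(x,\gamma)$ of the random walk hitting $\gamma$ before $0$: by rescaling and applying Corollary~\ref{cor:boundRW} with $s=x$ and $\gamma/\sqrt\eps$ in place of $\gamma$, one gets $P^\eps(x,\gamma)=\sqrt\eps x/\gamma+O(\eps/\gamma^{3/2-\delta})$, while the Brownian analogue equals exactly $\sqrt\eps x/\gamma$. Next, I would invoke the classical characterisation of the It\^o excursion measure (\cite[Thm~4.1]{RevYor},\cite{PitmanYor}) to show that a Brownian motion from $z$ conditioned to hit $\gamma$ before $0$ converges in Wasserstein-$1$ to $\Q_\gamma$ at rate $O(z)=O(\sqrt\eps x)$. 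Combining these two ingredients via the identity
\begin{equ}
\int F\,d\Q^\eps_{z,\gamma} = {\E\bigl[F(y)\,\one_{y\text{ hits }\gamma\text{ before }0}\bigr] \over P^\eps(x,\gamma)}\;,
\end{equ}
and bounding the numerator by its Brownian counterpart plus a Lipschitz-times-coupling-error term, the first bound $\|\Q^\eps_{z,\gamma}-\Q_\gamma\|_d\lesssim \eps^{\bar\beta}$ reduces to a balancing of $\eps^{1/4-\delta'}/\gamma$ against $\sqrt\eps/\gamma^{3/2-\delta}$ and $\sqrt\eps x$, which is feasible provided $\gamma\ge \eps^{1/16}$ and $x\le \eps^{-1/3}$ (these restrictions are precisely what make every error term of order $\eps^{\bar\beta}$ for some $\bar\beta<1/16$).

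The second bound, involving the metric $\bar d$, additionally requires control on the random time $\e$ at which the path first returns to $0$, which is the main obstacle. The issue is that the hitting time is not a uniformly continuous functional of the path: if $|y-B|\le\eta$ in sup norm and $B$ just grazes $0$ at some time, then $y$ may be forced to oscillate in a layer of width $\eta$ above $0$ for a macroscopic additional time before finally crossing. The standard way to quantify this is to bound the probability that $B$ remains in $[0,r]$ for a time-window around $\e(B)$ of length $\ell$: both by the reflection principle and by local time estimates, this probability is $O(r/\sqrt\ell)$. Optimising $r\sim\eta^{2/5}$, $\ell\sim \eta^{4/5}$ balances the two contributions and yields $|\e(y)-\e(B)|\lesssim \eta^{2/5}=\eps^{2(1/4-\delta')/5}$ with high probability, which explains the loss of a factor of $2/5$ in the exponent and gives the second bound $\|\iota^\star\Q^\eps_{z,\gamma}-\iota^\star\Q_\gamma\|_{\bar d}\lesssim \eps^{2\bar\beta/5}$.

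Throughout, the uniformity in $x\in[0,\eps^{-1/3}]$ is obtained by tracking the dependence of all error terms on $x$, which enters only polynomially in every estimate (as $x^{1/2+\delta}$ in Corollary~\ref{cor:boundRW} and as $x$ in the Brownian hitting probability), so the cubic gap $1/3$ between $x$ and $\eps^{-1/2}$ is more than enough room. The uniformity in $\gamma\in[\eps^{1/16},1]$ is the tightest constraint and dictates the admissible range $\bar\beta<1/16$ by forcing the coupling error $\eps^{1/4-\delta'}$ to be a positive power of $\gamma$ smaller than $\gamma$ itself.
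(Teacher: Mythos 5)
There is a genuine gap in the first (and main) bound. Your plan is to couple the walk and a Brownian motion from the common starting point $z=\sqrt\eps x$, condition both on hitting $\gamma$ before $0$, and compare the two conditional laws through the ratio representation $\int F\,d\Q^\eps_{z,\gamma}=\E[F(y)\one_A]/P^\eps(x,\gamma)$. The problem is the division by the conditioning probability. On the good coupling event the two indicators of ``hits $\gamma$ before $0$'' disagree with probability of order $\eta/\gamma$ with $\eta=\eps^{1/4-\delta'}$ (one path grazes the zero barrier and dies while the other, at height at most $\eta$, survives and later climbs to $\gamma$), and in your decomposition this term gets divided by $P^\eps(x,\gamma)\asymp \sqrt\eps\,G(x)/\gamma$, producing an error of order $\eps^{-1/4-\delta'}/G(x)$. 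This blows up for every $x\lesssim \eps^{-1/4}$, and the theorem requires uniformity over all $x\in[0,\eps^{-1/3}]$, including $x=O(1)$ and $x\to 0$. Relatedly, your expansion $P^\eps(x,\gamma)=\sqrt\eps x/\gamma+O(\cdot)$ is not correct at this scale: Corollary~\ref{cor:boundRW} gives an error $O\bigl(\sqrt\eps(1+x^{1/2+\delta})/\gamma\bigr)$, which is as large as the main term for bounded $x$, and by Proposition~\ref{prop:defG} the true leading behaviour is $\sqrt\eps\,G(x)/\gamma$ with $G(x)\neq x$ for moderate $x$ — so even the denominators in your ratio comparison differ at relative order one. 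The two conditional laws are nevertheless close, but the cancellation responsible for this lives entirely in the microscopic initial segment of the conditioned path and is invisible to the numerator/denominator bookkeeping you propose; no choice of $\delta'$, $\rho$, or exponent balancing repairs this.

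The paper closes exactly this gap with a two-stage argument that your proposal is missing. First it shows that the \emph{conditioned} walk started at $z=\sqrt\eps x\le\eps^{1/6}$ reaches the mesoscopic level $\eps^{1/16}$ within time $\eps^\beta$ (any $\beta<1/8$) with probability $1-O(\eps^p)$ and without significant overshoot, so the initial piece contributes only $O(\eps^{\bar\beta})$ to the path metric. Then, from that level, the comparison with the Brownian motion conditioned on the same event is done via Proposition~\ref{prop:convEpsAlpha}, which rests on the abstract conditional-measure Lemma~\ref{lem:convCond}: there the conditioning probability is $\delta=\eps^{1/16}/\gamma$ rather than $\sqrt\eps G(x)/\gamma$, the non-Lipschitz conditioning functional is mollified at scale $\rho$, and the resulting $1/\delta$ amplification is affordable after optimising $\rho$; Lemma~\ref{lem:convBM} then transfers from the conditioned Brownian motion to $\Q_\gamma$. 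You would need some device of this kind (restarting at a mesoscopic level under the conditioning, or an equivalent $h$-transform argument) to make your strategy work. Your treatment of the return time for the $\bar d$-bound — controlling the time the limiting path spends grazing zero and optimising the window — is in the right spirit and closely parallels the paper's $\tau_1,\tau_2$ argument, but it presupposes the first bound and so cannot rescue the proof on its own.
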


Our main abstract ingredient in the proof is the following criterion for the convergence
of conditional probabilities when the probability of the set on which the measures are conditioned converges to $0$:

\begin{lemma}\label{lem:convCond}
Let $\mu$ and $\pi$ be two 
 probability measures on some Polish space $\CY$ with metric $d$ and diameter $1$
 and let $\CD_\mu\colon \CY \to [0,1]$ and $\CD_\pi\colon \CY \to [0,1]$. For $\rho > 0$, set
 \begin{equs}
 A_\rho &= \{x \in \CY\,:\, \exists y \in \CY \;\text{with}\; \CD_\pi(y) > \CD_\pi(x) + d(x,y)/\rho\}\;,\\
 \bar A_\rho &= \{x \in \CY\,:\, d(x,A_\rho) \le \rho\}\;. 
 \end{equs}
Assume that $\delta$, $\eps_1$ and $\eps_2$ are such that
\begin{equ}
\int_\CY \CD_\pi(x) \pi(dx) \ge \delta \;,\quad \|\mu - \pi\|_d \le \eps_1\;, \quad \sup_x |\CD_\pi(x) - \CD_\mu(x)| \le \eps_2\;,
\end{equ}
where $\|\cdot\|_d$ denotes the Wasserstein-$1$ distance with respect to $d$. 

Define measures $\tilde\mu$ and $\tilde \pi$ by
\begin{equ}
\tilde \mu(A) = c_\mu\int_A \CD_\mu(x) \mu(dx)\;,\qquad \tilde \pi(A) = c_\pi \int_A \CD_\pi(x) \pi(dx)\;,
\end{equ}
where $c_\mu$ and $c_\pi$ are such that these are probability measures.

Then, the bound
\begin{equ}[e:boundCondMeas]
\|\tilde \mu - \tilde \pi\|_d \le {1\over \delta} \Bigl({3\eps_1\over \rho} + \eps_2 + 2\pi(\bar A_\rho)\Bigr)\;,
\end{equ}
holds for every $\rho \le 1$. In particular, one has $\int_\CY  \CD_\mu(x) \mu(dx)>0$ whenever the right hand side in \eref{e:boundCondMeas} is
strictly smaller than $1$, so that the bound is non-trivial.
\end{lemma}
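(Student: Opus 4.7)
The plan is to exploit Kantorovich--Rubinstein duality together with a symmetric algebraic rewrite that places $Z_\pi$, rather than $Z_\mu$, in the denominator. Write $g_\mu=\CD_\mu$, $g_\pi=\CD_\pi$, $Z_\mu=\int g_\mu\,d\mu$, $Z_\pi=\int g_\pi\,d\pi$, and note that the diameter-$1$ hypothesis on $\CY$ allows one, at the cost of a harmless shift, to restrict attention to test functions $f$ with $\|f\|_\mathrm{Lip}\le 1$ and $|f|\le\tfrac12$. Setting $f''=f-\int f\,d\tilde\mu$, one has $|f''|\le 1$, $\|f''\|_\mathrm{Lip}\le 1$, and $\int f''g_\mu\,d\mu=0$ by construction. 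A direct computation gives the identity $\int f\,d\tilde\mu-\int f\,d\tilde\pi=-Z_\pi^{-1}\int f''g_\pi\,d\pi$. This is the decisive move: the denominator is now $Z_\pi\ge\delta$, whereas a naive rewrite would leave $Z_\mu$ (for which we have no a priori lower bound) in its place.

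It remains to bound $|\int f''g_\pi\,d\pi|$. Using the vanishing of $\int f''g_\mu\,d\mu$, one decomposes $\int f''g_\pi\,d\pi=\int f''(g_\pi-g_\mu)\,d\mu+\int f''g_\pi\,d(\pi-\mu)$, the first piece being bounded pointwise by $\eps_2$. For the second, since $g_\pi$ need not be Lipschitz, I introduce the upper Lipschitz envelope $\hat g(x)=\sup_{y\in\CY}\bigl(g_\pi(y)-d(x,y)/\rho\bigr)$. By construction $\hat g$ is $(1/\rho)$-Lipschitz, takes values in $[0,1]$ and satisfies $\hat g\ge g_\pi$ everywhere; moreover, exactly by the definition of $A_\rho$, one has $\hat g(x)=g_\pi(x)$ whenever $x\notin A_\rho$. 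Splitting $\int f''g_\pi\,d(\mu-\pi)=\int f''\hat g\,d(\mu-\pi)-\int_{A_\rho}f''(\hat g-g_\pi)\,d(\mu-\pi)$, the product rule gives $\|f''\hat g\|_\mathrm{Lip}\le\|f''\|_\infty/\rho+\|\hat g\|_\infty\le 1+1/\rho\le 2/\rho$, so Kantorovich duality yields $|\int f''\hat g\,d(\mu-\pi)|\le 2\eps_1/\rho$.

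The remaining integral is dominated in absolute value by $\mu(A_\rho)+\pi(A_\rho)$, and I transfer the $\mu$-mass to $\pi(\bar A_\rho)$ via the $(1/\rho)$-Lipschitz cutoff $\chi(x)=(1-d(x,A_\rho)/\rho)_+$, which satisfies $\mathbf{1}_{A_\rho}\le\chi\le\mathbf{1}_{\bar A_\rho}$; this gives $\mu(A_\rho)\le\int\chi\,d\mu\le\int\chi\,d\pi+\eps_1/\rho\le\pi(\bar A_\rho)+\eps_1/\rho$. Collecting everything, $|\int f''g_\pi\,d(\mu-\pi)|\le 3\eps_1/\rho+2\pi(\bar A_\rho)$, and hence $|\int f''g_\pi\,d\pi|\le\eps_2+3\eps_1/\rho+2\pi(\bar A_\rho)$. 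Dividing by $Z_\pi\ge\delta$ and taking the supremum over admissible $f$ produces \eref{e:boundCondMeas}.

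For the final ``positivity'' claim, running the same chain of estimates with the constant test function in place of $f''$ yields $|Z_\mu-Z_\pi|\le\eps_2+2\eps_1/\rho+2\pi(\bar A_\rho)$; whenever the right-hand side of \eref{e:boundCondMeas} is strictly less than $1$, this is strictly less than $\delta$, forcing $Z_\mu>0$ and ensuring $\tilde\mu$ is well defined. The main obstacle in the proof is the opening algebraic identity: without the symmetric rewrite placing $Z_\pi$ in the denominator, one is left comparing two probability measures with no way to control the normalization of $\tilde\mu$. Once that move is in place, everything downstream is the standard Lipschitz-envelope-plus-cutoff machinery that one uses to control Wasserstein-$1$ distances against non-Lipschitz test densities.
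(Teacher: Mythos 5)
Your proof is correct and follows essentially the same route as the paper's: the Lipschitz envelope $\hat g(x)=\sup_y\bigl(\CD_\pi(y)-d(x,y)/\rho\bigr)$ (the paper's $\CD_\pi^\rho$), its agreement with $\CD_\pi$ off $A_\rho$, and the transfer of $\mu(A_\rho)$ to $\pi(\bar A_\rho)$ via a $(1/\rho)$-Lipschitz cutoff are exactly the paper's devices, and you arrive at the same constants. The only difference is the bookkeeping of the normalisations: you centre the test function so that only $Z_\pi\ge\delta$ appears in the denominator, whereas the paper splits off a $|c_\pi^{-1}-c_\mu^{-1}|$ term and notes it is the main estimate applied to $f=\tfrac12$ — a cosmetic variation.
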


\begin{proof}
Let $f \colon \CY \to \R$ be a test function such that $\Lip_d(f) \le 1$. Since the diameter of $\CY$ is assumed to be $1$,
we can assume without loss of generality (by possibly adding a constant to $f$) that $\sup_x |f(x)| \le {1\over 2}$. 
Since one has the identity
\begin{equ}
\|\tilde \mu - \tilde \pi\|_d = \sup_{\Lip_d(f) \le 1} \Bigl(c_\mu \int f(x)\CD_\mu(x)\,\mu(dx) - c_\pi \int f(x)\CD_\pi(x)\,\pi(dx)\Bigr)= \sup_{\Lip_d(f) \le 1} \CI_f\;,
\end{equ}
our aim is to bound $\CI_f$, uniformly over $f$. Note first that, by the bound on $f$, 
\begin{equs}
\CI_f &\le {c_\pi\over 2} \Bigl|{1\over c_\pi} - {1 \over c_\mu}\Bigr| + {c_\pi \over 2} \int |\CD_\mu(x)-\CD_\pi(x)|\,\mu(dx) \\
&\qquad + c_\pi \Bigl|\int f(x)\CD_\pi(x)\,\mu(dx)-\int f(x)\CD_\pi(x)\,\pi(dx)\Bigr|\;.
\end{equs}
Note  that the first term is nothing but a particular instance of the last term with $f = {1\over 2}$.
Since the second term is furthermore trivially bounded by $\eps_2 / (2\delta)$, it suffices to bound the last term.

The problem in bounding this term arises of course from the fact that $\CD_\pi$ is not Lipschitz continuous.
For any $\rho > 0$, we can however ``mollify'' this function by setting
\begin{equ}
\CD_\pi^\rho(x) = \sup_{y \in \CY} \Bigl(\CD_\pi(y) - {d(x,y)\over \rho}\Bigr)\;.
\end{equ}
It is then straightforward to check that $\Lip_d (\CD_\pi^\rho) \le \rho^{-1}$, that $\CD_\pi (x) \le \CD_\pi^\rho(x) \le \sup_y\CD_\pi (y)$, and that furthermore 
$\CD_\pi^\rho(x) = \CD_\pi(x)$ for all $x \not \in A_\rho$. It then follows from the definition of $\eps_1$ that
\begin{equ}
\Bigl|\int f(x)\CD_\pi^\rho(x)\,\mu(dx)-\int f(x)\CD_\pi^\rho(x)\,\pi(dx)\Bigr| \le {\eps_1} \bigl(1 + (2\rho)^{-1}\bigr)\;.
\end{equ}
Furthermore, 
\begin{equ}
\Bigl|\int f(x)\CD_\pi^\rho(x)\,\pi(dx)-\int f(x)\CD_\pi(x)\,\pi(dx)\Bigr| \le {\pi(A_\rho) \over 2}\;,
\end{equ}
and similarly for the term with $\pi$ replaced by $\mu$. In order to bound $\mu(A_\rho)$, we set as above
\begin{equ}
F^\rho(x) = \sup_{y \in \CY} \Bigl(\one_{A_\rho}(y) - {d(x,y)\over \rho}\Bigr) \;,
\end{equ}
so that
\begin{equ}
\mu(A_\rho) \le \int F^\rho(x) \,\mu(dx) \le {\eps_1 \over \rho} + \int F^\rho(x) \,\pi(dx) \le {\eps_1 \over \rho} + \pi(\bar A_\rho)\;,
\end{equ}
where we used the fact that $F^\rho$ vanishes outside of $\bar A_\rho$. Collecting all of these bounds, the claim follows.
\end{proof}

An alternative description of $w^\gamma$ is given by the following. Let $Y$ be a Bessel-$3$ process 
starting at the origin and let $\tau_\gamma$ be its first hitting time of $\gamma$, i.e.\ $\tau_\gamma = \inf\{t \ge 0\,:\, Y_t \ge \gamma\}$.
Let furthermore $B$ be a Brownian motion independent of $Y$, which is stopped when 
it reaches the level $\gamma$. Then, one has the decomposition
\begin{equ}[e:decompM]
w^\gamma_t = 
\left\{\begin{array}{cl}
	Y_t & \text{for $t \le \tau_\gamma$,} \\
	\gamma - B_{t-\tau_\gamma} & \text{for $t \ge \tau_\gamma$.}
\end{array}\right.
\end{equ}
This is a consequence of the symmetry of the Brownian excursion under time reversal, combined
with \cite[Theorem~49.1]{RogWil} for example.

We can use the above decomposition to obtain the following bound:
\begin{lemma}\label{lem:contMdelta}
For any $\beta < {1\over 4}$, there exists a constant $C$ such that, 
for every $\gamma,\gamma' \in (0,1]$, 
one has the bound
\begin{equ}
\|\Q_\gamma - \Q_{\gamma'}\|_d \le C |\gamma - \gamma'|^\beta\;.
\end{equ}
\end{lemma}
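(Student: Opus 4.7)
The plan is to construct an explicit coupling of $w^\gamma$ and $w^{\gamma'}$ via the decomposition \eref{e:decompM}. We may assume $\gamma \le \gamma'$. Use a single Bessel-$3$ process $Y$ for both processes (so that the hitting times $\tau_\gamma \le \tau_{\gamma'}$ are produced by the same sample path) together with a single Brownian motion $B$, independent of $Y$, for both descending phases. This produces a coupling under which $w^\gamma$ and $w^{\gamma'}$ coincide on $[0,\tau_\gamma]$ and can be compared explicitly on $[\tau_\gamma, 1]$. The desired bound on the Wasserstein distance will then follow from $\|\Q_\gamma - \Q_{\gamma'}\|_d \le \E[\,1 \wedge \|w^\gamma - w^{\gamma'}\|_\infty\,]$ together with the elementary inequality $1 \wedge x \le x^\beta$ valid for every $\beta \in (0,1]$ and every $x \ge 0$.

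The quantitative backbone of the argument is a Bessel-$3$ hitting-time estimate. Solving the ODE $\tfrac{1}{2}u''(r)+r^{-1}u'(r)=-1$ on $(0,\gamma')$ with $u(\gamma')=0$ and requiring boundedness at the origin yields $u(r)=(\gamma'^2-r^2)/3$, hence $\E_\gamma\tau_{\gamma'}=(\gamma'^2-\gamma^2)/3 \le \tfrac{2}{3}|\gamma-\gamma'|$. Bessel-$3$ scaling together with standard Brownian estimates then controls higher moments and tail probabilities of $\Delta := \tau_{\gamma'}-\tau_\gamma$.

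The next step is to decompose $[0,1]$ into four natural subintervals: $[0,\tau_\gamma]$, on which the two processes coincide; $[\tau_\gamma,\tau_{\gamma'}]$, on which $w^{\gamma'}$ is still ascending while $w^\gamma$ is descending, so that $|w^\gamma-w^{\gamma'}| \le |Y_t-\gamma|+|B_{t-\tau_\gamma}|$; the parallel descending phase $[\tau_{\gamma'},\e_\gamma]$, on which the difference equals $(\gamma-\gamma')+(B_{t-\tau_{\gamma'}}-B_{t-\tau_\gamma})$, i.e.\ a Brownian increment on a window of length $\Delta$; and the residual region $[\e_\gamma,\e_{\gamma'}]$, on which $w^\gamma$ has returned to $0$ but $w^{\gamma'}$ is still alive. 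On the first three regions the sup-distance is controlled by the modulus of continuity of $Y$ and $B$ at scale $\Delta$, yielding an expected contribution of order $|\gamma-\gamma'|^{1/2}$ up to logarithms.

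The main difficulty lies in the residual region $[\e_\gamma,\e_{\gamma'}]$, where $|w^\gamma-w^{\gamma'}|=w^{\gamma'}_t$ and a Brownian excursion is unbounded from above. To handle it, the plan is to apply the strong Markov property at $\e_\gamma$ and to control both the length of this interval (of order $\Delta$ plus the additional Brownian hitting-time gap, bounded after truncation at $1$ by $C|\gamma-\gamma'|$) and the maximum of $w^{\gamma'}$ on it, via a reflection-principle computation showing that the expected maximal excursion below level $\gamma$ performed by $B$ between its hitting times of $\gamma$ and $\gamma'$ is bounded by $C|\gamma-\gamma'|\log(1/|\gamma-\gamma'|)$. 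Combining all pieces, then using $1 \wedge x \le x^{2\beta}$ and invoking Jensen's inequality for the concave map $x \mapsto x^{2\beta}$, yields $\|\Q_\gamma-\Q_{\gamma'}\|_d \le C|\gamma-\gamma'|^\beta$ for every $\beta<1/4$. The exponent $1/4$ is conservative and reflects the loss from absorbing the weaker control on the residual region, rather than the sharper exponent close to $1/2$ available on the rest of the path.
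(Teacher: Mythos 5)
Your proposal is correct and takes essentially the same route as the paper: couple $w^\gamma$ and $w^{\gamma'}$ through the decomposition \eref{e:decompM} by using the same Bessel-$3$ process $Y$ and the same Brownian motion $B$, control the gap $\tau_{\gamma'}-\tau_\gamma$ (the paper via the tail bound $\P(|\tau_\gamma-\tau_{\gamma'}|>\zeta)\le 1\wedge C|\gamma-\gamma'|\zeta^{-3/2}$, you via the mean $(\gamma'^2-\gamma^2)/3$ plus Markov/Jensen), and then conclude from the H\"older regularity of $Y$ and $B$. Your explicit gambler's-ruin treatment of the residual interval on which $w^\gamma$ is already absorbed while $w^{\gamma'}$ is still alive is a refinement of a point the paper's two-line proof leaves implicit, and it is handled correctly, since the truncation at $1$ built into $d$ is exactly what turns the (otherwise non-integrable) dip below level $\gamma$ into the bound $C|\gamma-\gamma'|\log(1/|\gamma-\gamma'|)$.
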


\begin{proof}
The decomposition \eref{e:decompM} suggests a natural coupling between $w^\gamma$ and $w^{\gamma'}$ by
building them from the same basic building blocks $Y$ and $B$. 
The characterisation of the Bessel-$3$ process as the norm of a $3$-dimensional Brownian motion, together with standard hitting estimates for
Brownian motion, imply that
\begin{equ}
\P(|\tau_\gamma - \tau_{\gamma'}| > \zeta) \le 1 \wedge {C |\gamma-\gamma'| \over \zeta^{3/2}}\;,
\end{equ}
so that in particular $\P(|\tau_\gamma - \tau_{\gamma'}| \ge \sqrt{|\gamma - \gamma'|}) \le C|\gamma - \gamma'|^{1/4}$.
The result now follows from the fact that both $B$
and $Y$ are almost surely $\alpha$-H\"older continuous for every $\alpha < {1\over 2}$.
\end{proof}

\begin{lemma}\label{lem:convBM}
Let $B^\gamma_z$ be a Brownian motion starting at $z$,
conditioned to hit $\gamma$ before $0$, and stopped upon
its return to $0$. Then, for every $\beta < 1$, there exists a constant $C$ depending on $\beta$ such that the bound
\begin{equ}
\|\CD(B^\gamma_\eps) - \Q_\gamma\|_d \le C_\gamma \eps^\beta\;,
\end{equ}
holds uniformly over $\eps \in (0,\gamma\wedge 1]$ and $\gamma > 0$.
\end{lemma}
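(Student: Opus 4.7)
The plan is to construct a pathwise coupling of $B^\gamma_\eps$ with a realisation $w^\gamma$ of $\Q_\gamma$ under which the two paths differ only by a small random time-shift of order $\eps^2$, and then to bound the sup-distance by the H\"older modulus of continuity of $w^\gamma$. The key observation is that, by Doob's $h$-transform with the harmonic function $h(x)=x/\gamma$, a Brownian motion on $(0,\gamma)$ conditioned to hit $\gamma$ before $0$ is precisely a Bessel-3 process. Once the process has hit $\gamma$, the conditioning event is already realised, so that the remainder of the trajectory is just an unconditioned Brownian motion from $\gamma$ run until its first hit of $0$. Combining this with the decomposition \eref{e:decompM} of $w^\gamma$, both $B^\gamma_\eps$ and $w^\gamma$ admit the same two-phase structure: a Bessel-3 process (starting at $\eps$, respectively $0$) run until it hits $\gamma$, followed by an independent Brownian motion from $\gamma$ run until it hits $0$.

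Accordingly, let $Y$ be a Bessel-3 process starting at $0$ with first hitting time $\tau=\inf\{t:Y_t=\gamma\}$, let $B$ be an independent standard Brownian motion stopped upon reaching $\gamma$ at time $\sigma$, and build $w^\gamma$ from $Y$ and $B$ via \eref{e:decompM}, extended by $0$ past $\tau+\sigma$. Set $T_\eps=\inf\{t\ge 0:Y_t=\eps\}\le\tau$. By the strong Markov property of $Y$ at $T_\eps$, the shifted path $(Y_{T_\eps+s})_{s\ge 0}$ is a Bessel-3 process starting at $\eps$, and its hitting time of $\gamma$ equals $\tau-T_\eps$. Using this shifted process for the first phase of $B^\gamma_\eps$ and the same $B$ for its second phase produces $B^\gamma_\eps$ with its correct law together with the pathwise identity
\begin{equ}
B^\gamma_\eps(t)=w^\gamma(t+T_\eps)\qquad\text{for all $t\ge 0$.}
\end{equ}

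From this identity and the Kantorovich-Rubinstein duality $\|\CD(B^\gamma_\eps)-\Q_\gamma\|_d\le\E d(B^\gamma_\eps,w^\gamma)$, we obtain, for every $\alpha<1/2$,
\begin{equ}
\|\CD(B^\gamma_\eps)-\Q_\gamma\|_d\le\E\bigl[K_\alpha\,T_\eps^{\alpha}\bigr]\le\bigl(\E K_\alpha^2\bigr)^{1/2}\bigl(\E T_\eps^{2\alpha}\bigr)^{1/2}\;,
\end{equ}
where $K_\alpha$ denotes the $\alpha$-H\"older constant of $w^\gamma$ on $[0,2]$. The Brownian scaling $T_\eps\eqlaw\eps^2 T_1$ (with $T_1$ the Bessel-3 hitting time of $1$ from $0$, which has finite moments of all orders) gives $\E T_\eps^{2\alpha}\lesssim\eps^{4\alpha}$, while Kolmogorov's continuity criterion applied separately to the Bessel-3 and Brownian pieces of $w^\gamma$ yields $\E K_\alpha^2<\infty$ with a constant depending only on $\gamma$ and $\alpha$ (the junction at $\gamma$ is continuous by construction). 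Letting $\alpha\uparrow 1/2$ produces the bound $\|\CD(B^\gamma_\eps)-\Q_\gamma\|_d\le C_\gamma\eps^{\beta}$ for every $\beta<1$. The main subtlety is controlling the moments of $K_\alpha$ uniformly in $\eps$ but with a possibly $\gamma$-dependent constant, which accounts for the $\gamma$-dependence of the final bound.
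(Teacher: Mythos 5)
Your proposal is correct and takes essentially the same route as the paper: both rest on the exact identity $B^\gamma_\eps(\cdot) \eqlaw w^\gamma(\cdot + \tau_\eps)$ obtained from the decomposition \eref{e:decompM} (you justify it via the Doob $h$-transform and the strong Markov property, which is exactly what underlies the paper's one-line claim), followed by smallness of the shift $\tau_\eps$ and the H\"older regularity of $w^\gamma$; the only cosmetic differences are that you quantify $\tau_\eps$ through the scaling $T_\eps \eqlaw \eps^2 T_1$ and moments rather than a small-ball tail bound, and you control the random H\"older constant by Cauchy--Schwarz rather than an exceptional-event argument. The one tiny omission is the event $\{T_\eps > 1\}$, on which your H\"older bound on $[0,2]$ does not apply, but since $d \le 1$ and $\P(T_\eps > 1) = \P(T_1 > \eps^{-2}) = O(\eps^{2p})$ for every $p$, this is immediate to repair.
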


\begin{proof}
Let $w^\gamma$  be as above and let $\tau_\eps$ be the first passage time 
of $w^\gamma$ through $\eps$. Then, it follows from the decomposition \eref{e:decompM} that one has the
exact identity 
\begin{equ}[e:idenM]
B^\gamma_\eps(\cdot) \eqlaw w^\gamma(\cdot- \tau_\eps)\;.
\end{equ}
It then follows from the small ball estimates of Brownian motion that, for every $\bar \zeta < 2$, one has the bound
\begin{equ}
\P (\tau_\eps \ge \eps^{\bar \zeta}) \lesssim \eps\;.
\end{equ}
The desired estimate then follows at once from the H\"older regularity of $w^\gamma$.
\end{proof}

\begin{proposition}\label{prop:convEpsAlpha}
Let $\alpha \in (0,{1\over 8})$.  Suppose further that $\gamma > 0$ is fixed and denote by $y^{\gamma}_t$
the  random walk $y_t$ conditioned to hit $[\gamma,\infty)$ before hitting $\R_-$ and stopped
when it then hits $\R_-$.
Assume that $y^{\gamma}_0 = \eps^\alpha$.
Then the law of $y^\gamma$ converges weakly to $\Q_\gamma$ as $\eps \to 0$.
Furthermore, for every $\beta > 0$, 
there exists a constant $C$ such that the bound
\begin{equ}
\|\CD(y^\gamma) - \Q_\gamma\|_d \le C \bigl(\sqrt \gamma \eps^{{1\over 8}-\alpha} + \eps^{\alpha-\beta}\bigr)\;,
\end{equ}
holds uniformly over all $\eps \le 1$ and $\gamma \in [\eps^\alpha,1]$.
\end{proposition}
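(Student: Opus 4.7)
The plan is to apply Lemma~\ref{lem:convCond} in order to transport a Donsker-type coupling between random walk and Brownian motion through the conditioning on the rare event $\{\sup x \ge \gamma\}$. Specifically, I take $\CY = \CC([0,1],\R)$ with the truncated sup-norm metric $d$ introduced before Theorem~\ref{theo:convLaws}, let $\mu$ be the law of the rescaled random walk \eref{e:RW} started at $\eps^\alpha$ and extended by its value at the first hitting time of $\R_-$, and let $\pi$ be the analogous law for a standard Brownian motion started at $\eps^\alpha$ and extended at its first hitting time of $0$. The conditioning densities of Lemma~\ref{lem:convCond} are chosen as $\CD_\pi(x) = \CD_\mu(x) = \one\{\sup_{t\le 1} x_t \ge \gamma\}$, so that the parameter $\eps_2$ in that lemma vanishes, $\tilde\mu$ coincides with the law of $y^\gamma$ by construction, and $\tilde\pi$ is the law of a Brownian motion started at $\eps^\alpha$, conditioned to reach $\gamma$ before $0$ and stopped upon returning to $0$, i.e.\ the object $B^\gamma_{\eps^\alpha}$ appearing in Lemma~\ref{lem:convBM}.

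Two quantitative ingredients feed into the hypotheses. First, by the strong approximation result \cite{ConvRateBM} already used in the proof of Proposition~\ref{prop:defG}, one can couple $y$ and $B$ so that $\sup_{t\le 1}|y_t-B_t| \le \eps^{1/4-\bar\delta}$ with probability at least $1 - C\eps^q$ for any $q>0$ and any $\bar\delta>0$; this yields $\eps_1 := \|\mu-\pi\|_d \le C\eps^{1/4-\bar\delta}$. Second, gambler's ruin for a Brownian motion started at $\eps^\alpha$ and stopped at $0$ gives $\P(\sup B \ge c) = \eps^\alpha/c$ for every $c \ge \eps^\alpha$, so that $\delta := \int \CD_\pi\,d\pi = \eps^\alpha/\gamma$. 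A direct inspection of the definitions yields $A_\rho \subset \{\sup x \in [\gamma-\rho,\gamma)\}$ and hence $\bar A_\rho \subset \{\sup x \in [\gamma-2\rho,\gamma+\rho]\}$; a second application of gambler's ruin then gives $\pi(\bar A_\rho) \le C\rho\eps^\alpha/\gamma^2$ as long as $\gamma \ge \eps^\alpha$.

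Feeding these estimates into Lemma~\ref{lem:convCond} produces the bound $\|\tilde\mu - \tilde\pi\|_d \le \delta^{-1}\bigl(3\eps_1/\rho + 2\pi(\bar A_\rho)\bigr) \le C\bigl(\gamma\eps^{1/4-\bar\delta}/(\rho\,\eps^\alpha) + \rho/\gamma\bigr)$, which upon optimising in $\rho$ is comfortably dominated by $C\sqrt\gamma\,\eps^{1/8-\alpha}$ (the residual $\bar\delta$ being absorbable into the prefactor). Finally, applying Lemma~\ref{lem:convBM} with starting level $\eps^\alpha$ in place of $\eps$ gives $\|\tilde\pi - \Q_\gamma\|_d \le C\eps^{\alpha\beta'}$ for any $\beta'<1$; setting $\beta = \alpha(1-\beta')$ and combining with the previous estimate via the triangle inequality yields the statement.

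The one genuinely delicate point is that the conditioning event has vanishing probability $\eps^\alpha/\gamma$, which would inflate any naive bound on conditional laws by the divergent factor $\gamma/\eps^\alpha$. Lemma~\ref{lem:convCond} is designed precisely to manage this: it regularises the non-Lipschitz indicator $\CD$ at scale $\rho$, paying $\eps_1/(\rho\delta)$ for the loss of Lipschitz constant and $\pi(\bar A_\rho)/\delta$ for the thin slab of paths whose supremum lies just below $\gamma$. The proof therefore reduces to a two-sided gambler's ruin estimate on this slab, together with a verification that the Donsker coupling from \cite{ConvRateBM} is uniform in the starting level $\eps^\alpha$ and in $\gamma \in [\eps^\alpha, 1]$.
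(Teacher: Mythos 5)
Your overall strategy -- Lemma~\ref{lem:convCond} applied to a Donsker-type coupling of the walk and Brownian motion, with gambler's ruin giving $\delta=\eps^\alpha/\gamma$ and a slab estimate for $\pi(\bar A_\rho)$, then Lemma~\ref{lem:convBM} and the triangle inequality -- is exactly the skeleton of the paper's proof. But there is a genuine gap in your choice of conditioning densities. You take $\CD_\mu=\CD_\pi=\one\{\sup_{t\le 1}x_t\ge\gamma\}$, which conditions on reaching $\gamma$ \emph{within the time window} $[0,1]$. The proposition, however, concerns the walk conditioned to hit $[\gamma,\infty)$ before $\R_-$ with no time restriction, and $\Q_\gamma$ likewise charges excursions that have not yet reached level $\gamma$ by time $1$. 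The correct density on a path restricted to $[0,1]$ is the conditional probability of the (time-unbounded) event given that path: it is $0$ or $1$ on paths that have exited $[0,\gamma]$ by time $1$, but on the remaining paths it equals the hitting probability from the endpoint $w_1$, i.e.\ $w_\tau/\gamma$ for Brownian motion and the quantity $\bar P$ of Section~\ref{sec:BMformal} for the walk; this is precisely the paper's $F_\gamma$ and $F^\eps_\gamma$, with the three cases $A^{(1)},A^{(2)},A^{(3)}$. Your indicator assigns weight $0$ to the $A^{(3)}$ paths, so $\tilde\mu$ is \emph{not} the law of $y^\gamma$ and $\tilde\pi$ is \emph{not} $\CD(B^\gamma_{\eps^\alpha})$, and the discrepancy does not vanish with $\eps$: for $\gamma$ of order one, the conditioned process fails to reach $\gamma$ by time $1$ with probability bounded away from zero (it behaves like a Bessel-$3$ process started near the origin), so the Wasserstein distance between your $\tilde\mu$ and $\CD(y^\gamma)$ stays of order one. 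In particular the claim ``$\eps_2=0$'' is an artefact of the wrong density; with the correct densities one must compare the walk's continuation probability with $w_1/\gamma$, which is exactly where Corollary~\ref{cor:boundRW} enters and produces the non-negligible contribution of size $\eps^{1/4}/\gamma$ that your argument has discarded.

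Two smaller points. First, with the correct densities the bad set $A_\rho$ also includes paths whose infimum is close to $0$ (where the stopped value, hence $F_\gamma$, jumps), not only the slab $\{\sup\in[\gamma-\rho,\gamma]\}$; the paper's bound $\pi(\bar A_\rho)\le C\rho$ accounts for both. Second, your claim $\|\mu-\pi\|_d\lesssim\eps^{1/4-\bar\delta}$ for the \emph{stopped} processes is optimistic: stopping at the boundary degrades the coupling, and the paper only uses the rate $\eps^{1/8}$ from \cite{ConvRateBM}, which is what produces the exponent ${1\over 8}-\alpha$ in the statement. Repairing your argument essentially forces you back to the paper's choice of $F_\gamma$, $F^\eps_\gamma$ and the attendant estimates.
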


\begin{proof}
By Lemma~\ref{lem:convBM}, it suffices to compare the law of $y^\gamma$
with that of $B^\gamma_{\eps^\alpha}$.  The result will then be a consequence of Lemma~\ref{lem:convCond}. 

To see this, we partition the state space
$\CY = \{w \in \CC([0,1],\R)\,:\, w_0 = \eps^\alpha\}$ into three sets in the following way. Given a continuous function $w$ with $w_0 \in (0,\gamma)$, we set
$\tau = 1 \wedge \inf\{t > 0\,:\, w_t \not \in [0,\gamma]\}$, and we define sets $A^{(i)}$ with $i\in \{1,2,3\}$ by
\begin{equs}[3]
{}&\mhpastefig[3/4]{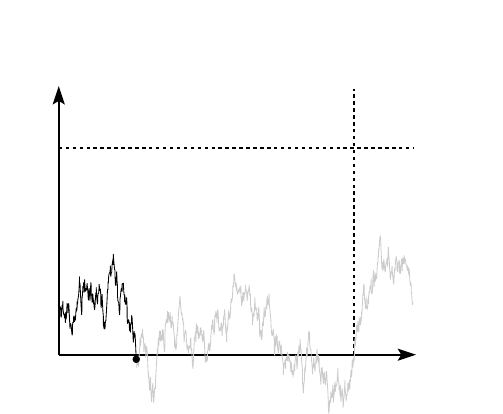}&&\mhpastefig[3/4]{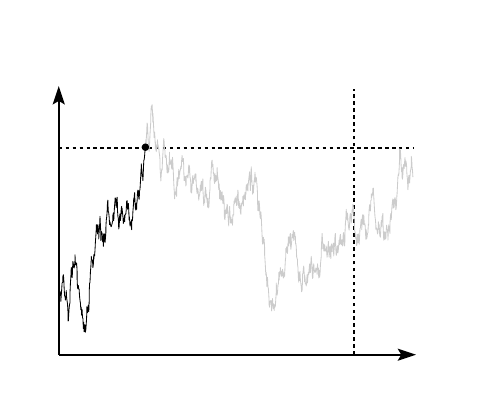}&&\mhpastefig[3/4]{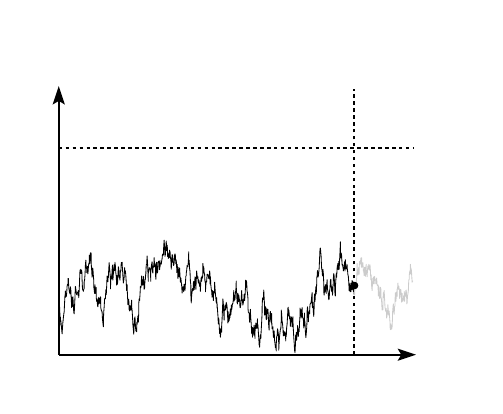} \\
&\hspace{0.6em} A^{(1)} = \{w\,:\, w_{\tau} = 0\}\;,&\quad &\hspace{0.6em} A^{(2)} = \{w\,:\, w_{\tau} = \gamma\}\;,&\quad&\hspace{0.6em} A^{(3)} = \{w\,:\, \tau = 1\}\;.
\end{equs}
Define furthermore functions $F_\gamma$ and $F_\gamma^\eps$ on $\CY$ by
\begin{equ}
F_\gamma(w) = {w_\tau \over \gamma} \;,\qquad F_\gamma^\eps(w) = 
\left\{\begin{array}{cl}
	 0 & \text{if $w \in A^{(1)}$,} \\
	 1 & \text{if $w \in A^{(2)}$,} \\
	 \bar P_{{\gamma\over \sqrt \eps},{w_1\over \sqrt \eps}} & \text{if $w \in A^{(3)}$,}
\end{array}\right.
\end{equ}
where $\bar P_{z,\gamma}$ is defined as in the discussion before Proposition \ref{prop:defG}. With these definitions at hand, if we set $\mu = \CD(y)$ with $y_0 = \eps^\alpha,$
$\pi = \CD(B_{\eps^\alpha})$, $\CD_\mu = F_{\gamma}^\eps$, and $\CD_\pi = F_\gamma$, then we are precisely in the setting of
Lemma~\ref{lem:convCond} with $\tilde \mu = \CD(y^\gamma)$ and $\tilde \pi = \CD(B_{\eps^\alpha}^\gamma)$.

Note first that, since $F_\gamma$ is precisely the probability that a Brownian motion started from $w_1$ hits $\gamma$ before $0$,
we have 
\begin{equ}
\delta = \int \CD_\pi(w) \pi(dw) = {\eps^\alpha \over \gamma}\;.
\end{equ}
Furthermore, it follows from Corollary~\ref{cor:boundRW} (and from the fact that $F_\gamma$ and $F_\gamma^\eps$ coincide outside of $A^{(3)}$) that
\begin{equ}
\eps_1 = \sup_{w} |F_\gamma(w) - F_\gamma^\eps(w)| \lesssim {\eps^{{1\over 4}} \over \gamma}\;.
\end{equ}
(We could have replaced ${1\over 4}$ by any exponent less than ${1\over 2}$ here.) Regarding the distance between the 
unconditioned measures, we obtain from \cite{ConvRateBM} the bound
\begin{equ}
\eps_2 = \|\mu - \pi\|_d \lesssim \eps^{1\over 8}\;.
\end{equ}

It thus remains to obtain a bound on $\bar A_\rho$. By the definition of $A_\rho$ and of $F_\gamma$, $w \in A_\rho$ implies that
either $w \in A^{(1)} \cup A^{(3)}$ and $d(w,A^{(2)}) \le \rho$ or $w\in A^{(1)}$ and $d(w,A^{(3)}) \le \rho$.
This implies that
\begin{equ}
A_\rho \subset \Bigl\{w\,:\, \sup_{t \in [0,1]} w_t \in [\gamma-\rho,\gamma]\Bigr\} \cup \Bigl\{w\,:\, \inf_{t \in [0,1]} w_t \in [-\rho,0]\Bigr\} \;,
\end{equ}
so that 
\begin{equ}
\bar A_\rho \subset \Bigl\{w\,:\, \sup_{t \in [0,1]} w_t \in [\gamma-2\rho,\gamma+\rho]\Bigr\} \cup \Bigl\{w\,:\, \inf_{t \in [0,1]} w_t \in [-2\rho,\rho]\Bigr\} \;.
\end{equ}
Since (by the reflection principle) the law of the extremum of Brownian motion over a finite time 
interval has a smooth density with respect to 
Lebesgue measure,
there exists a constant $C$ independent of $\eps$ and $\delta$ such that $\pi(\bar A_\rho) \le C\rho$.

Inserting these bounds into Lemma~\ref{lem:convCond}, we thus obtain the bound
\begin{equ}
\|\CD(y^\gamma) - \CD(B_{\eps^\alpha}^\gamma)\|_d \lesssim {\gamma \over\eps^\alpha} \Bigl({\eps^{1\over 4} \over \rho \gamma} + \eps^{1\over 8} + \rho\Bigr)\;.
\end{equ}  
Setting $\rho = \eps^{1\over 8} \gamma^{-{1\over 2}}$ completes the proof.
\end{proof}

We now have all the ingredients required for the proof of Theorem~\ref{theo:convLaws}.

\begin{proof}[of Theorem~\ref{theo:convLaws}] Assume as in the previous proof that $y^\gamma_t$ is the random walk $y_t$  conditioned to hit $[\gamma,\infty)$ before hitting $\R_-$ and stopped
when it then hits $\R_-$.  In contrast to the above setup we now assume that $y^\gamma_0 = z = x\sqrt \eps$ for some $x\geq 0$.
Let $k_0$ be given by
\begin{equ}
k_0 = \inf\big\{k > 0\,:\, y^\gamma_{k\eps} \ge \eps^{1\over 16}\big\}\;.
\end{equ}
It then follows from \cite{ConvRateBM}, combined with standard small ball estimates for Brownian motion that,
for every $\beta < {1\over 8}$ and every $p>0$ there exists a constant $C$ such that
\begin{equ}[e:boundk0]
\P \bigl(\eps k_0 > \eps^{\beta}\bigr) \lesssim \eps^p\;,
\end{equ}
uniformly over $\eps \le 1$, for all $x$ such that $x\sqrt \eps \le \eps^{1\over 16}$.
 Furthermore, the probability that $y^\gamma_{k_0\eps} > 2\eps^{1\over 16}$ (say)
is exponentially small in $\eps$, again uniformly over $x$. It then follows from Proposition~\ref{prop:convEpsAlpha} 
(choosing $\alpha = {1\over 16}$)
that, for every $\bar\beta < {1\over 16}$ one can construct a 
joint realisation of $y^\gamma$ and $w^\gamma$ such that
\begin{equ}
\E d\bigl(y^\gamma, w^\gamma(\cdot -\eps k_0)\bigr) \lesssim \eps^{{\bar \beta}}\;.
\end{equ}
(Here we extend $w^\gamma$ by setting it to $0$ for negative times.)
On the other hand,  the H\"older regularity of the sample paths of $w^\gamma$ together with 
\eref{e:boundk0} implies that 
\begin{equ}
\E d\bigl(w^\gamma, w^\gamma(\cdot -\eps k_0)\bigr) \lesssim \eps^{{\bar \beta}}\;,
\end{equ}
so that the bound on $\|\Q^{\eps}_{z,\gamma} - \Q_\gamma\|_d$ follows.

In order to obtain the bound on $\|\iota^\star\Q^{\eps}_{z,\gamma} - \iota^\star\Q_\gamma\|_{\bar d}$, we
make use of the same coupling between $y^\gamma$ and $w^\gamma$ as above,
so that $\E d(y^\gamma, w^\gamma) \lesssim \eps^{\bar \beta}$. 
It then remains to obtain a bound on 
\begin{equ}
|\e(\iota y^\gamma) - \e(\iota w^\gamma)| \;.
\end{equ}
For this, note first that, by Chebychev's inequality, one has the bound
\begin{equ}[e:Cheb]
\P \bigl(d(y^\gamma, w^\gamma) \ge \eps^\beta\bigr)\lesssim \eps^{\bar \beta-\beta}\;,
\end{equ}
valid for every $\beta \le \bar \beta$. Consider now any two paths $y^\gamma$ and $w^\gamma$
at distance less than $\eps^\beta$ and define 
\begin{equ}
\tau_1 = 1 \wedge \inf\{t > 0\,:\, w^\gamma(t) \le \eps^\beta\}\;,\qquad 
\tau_2 = 1 \wedge \inf\{t > \tau_1\,:\, w^\gamma(t) < -\eps^\beta\}\;.
\end{equ}
In this way, one has both $\e(\iota w^\gamma) \in [\tau_1,\tau_2]$ and 
$\e(\iota y^\gamma) \in [\tau_1,\tau_2]$, so that it remains to obtain
a bound on $\tau_2 - \tau_1$. The explicit expression for the hitting time of a line for a 
Brownian motion yields 
\begin{equ}
\P (\tau_2 - \tau_1 \ge \eps^\alpha) \le \eps^{\beta - {\alpha \over 2}}\;,
\end{equ}
for any $\alpha < 2\beta$. Choosing $\beta = {3\bar\beta\over 5}$ and $\alpha = {2\bar \beta \over 5}$
and combining this with \eref{e:Cheb}, we then obtain
\begin{equ}
\P (|\e(\iota y^\gamma) - \e(\iota w^\gamma)| \ge  \eps^{2\bar\beta \over 5})\lesssim \eps^{2\bar\beta \over 5}\;,
\end{equ}
from which the bound follows.
\end{proof}

\subsection{Convergence of recursive Poisson point processes}
\label{sec:convRPP}

The aim of this section is to provide a general result allowing us to bound the distance between
two recursive Poisson point processes of the same depth $n$ in terms of their respective kernels.
This result is the main abstract result on which the proof of the convergence result \eref{e:convFD} will 
be based. One difficulty that we have to overcome is that there is very little uniformity in the 
proximity of the kernel describing $\mu^{\eps,[n]}_\gamma$ to the one describing $\mu^{[n]}_\gamma$. 
%

Throughout this section, given a Polish space $\CX$ with a distance function $d$ bounded by $1$, we define the Wasserstein-$1$
distance between any two positive measures with finite mass (and not just probability measures!) by 
\begin{equ}
\|\mu - \pi\|_1 = \sup_{\Lip f \le 1 \atop \|f\|_\infty \le 1} \Bigl(\int_\CX f(x) \mu(dx) - \int_\CX f(x) \pi(dx)\Bigr)\;.
\end{equ}
If $\mu$ and $\pi$ happen to have the same mass, then the additional constraint on the supremum norm $\|f\|_\infty$
of $f$ is redundant in the above expression, and we simply recover the usual Wasserstein-$1$ distance. In the case where the masses of $\mu$ and
$\pi$ are different however, our choice of definitions ensures that 
\begin{equ}[e:compW1]
\|\mu - \pi\|_1 \approx |\mu(\CX) - \pi(\CX)| + |\mu(\CX)|\, \Bigl\| {\mu \over \mu(\CX)} - {\pi \over \pi(\CX)}\Bigr\|_1\;,
\end{equ}
where $\approx$ denotes that both quantities are bounded by multiples of each other, with proportionality constants that
are independent of $\mu$ and $\pi$.

The main result of this section is the following:

\begin{theorem}\label{theo:convPPP}
Let $\CQ$ and $\bar \CQ \colon \CX \to \MM_+(\CX)$ be two measurable maps 
and assume that the Polish space $\CX$ is endowed with a metric
$d$ bounded by $1$.
Let $A \subset \CX$, $\eps \in (0,1]$ and $K \ge 1$ be such that the bounds
\minilab{e:ass}
\begin{align}
\sup_{x\in \CX} \CQ(x,\CX)  &\le K\;,&\qquad
 \|\CQ(x)- \CQ(y)\|_1 &\le K d(x,y)\;,\quad \label{e:ass1}\\
\sup_{x\in A} \bar\CQ(x,A^{{\text{c}}}) &\le \eps\;,&\qquad
\sup_{x\in A} \|\CQ(x) - \bar \CQ(x)\|_1 &\le \eps\;, \label{e:ass2}
\end{align}
hold, where we use the notation $A^{\text{c}} = \CX \setminus A$. 

Fix $n>0$, $\bar x \in A$ and $x \in \CX$, and denote by $\mu_x^{[n]}$ and $\bar \mu_{\bar x}^{[n]}$ the recursive
Poisson point processes with respective kernels $\CQ$ and $\bar \CQ$.
Then, there exists a coupling between $\mu_x^{[n]}$ and $\bar \mu_{\bar x}^{[n]}$ such that
\begin{equ}
\E \bigl(1 \wedge \|\mu_{x}^{[n]}-\bar \mu_{\bar x}^{[n]}\|_1\bigr) \lesssim  C \bigl(\sqrt \eps + d(x,\bar x)\bigr)\;, 
\end{equ} 
where the proportionality constant $C$ depends only on $K$ and  $n$.
\end{theorem}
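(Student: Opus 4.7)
The plan is to induct on $n$, with the base case $n=0$ immediate since $\mu_x^{[0]} = \delta_x$ and $\|\delta_x - \delta_{\bar x}\|_1 \le d(x,\bar x)$. For the inductive step I exploit the recursive identity \eref{e:recursion},
\[
\mu_x^{[n]} \eqlaw \delta_x + \int_\CX \tilde \mu_y^{[n-1]}\,\mu_x^1(dy)\;,
\]
together with its analogue for $\bar\mu_{\bar x}^{[n]}$, where $\mu_x^1$ is a realisation of a Poisson point process with intensity $\CQ(x,\cdot)$ and the $\tilde\mu_y^{[n-1]}$ are independent recursive PPPs of depth $n-1$.

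The first key step is to construct a joint realisation of the two first-generation processes. Combining \eref{e:ass1} and \eref{e:ass2} with the triangle inequality gives $\|\CQ(x,\cdot) - \bar\CQ(\bar x,\cdot)\|_1 \le K d(x,\bar x) + \eps$, using $\bar x \in A$. A Wasserstein-type coupling of Poisson point processes (augment $\CX$ by a cemetery point at distance $1$ from every other point, equalise the two total masses there, and draw a common Poisson-distributed number of pairs from an optimal transport plan between the augmented intensities) yields $\mu_x^1 = \sum_j \delta_{y_j} + \mathrm{surplus}_x$ and $\bar\mu_{\bar x}^1 = \sum_j \delta_{\bar y_j} + \mathrm{surplus}_{\bar x}$, with both $\E \sum_j d(y_j, \bar y_j)$ and the expected surplus mass bounded by a multiple of $K d(x,\bar x) + \eps$.

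Define the ``good'' event $G$ as the event that no surplus point is created and every $\bar y_j$ belongs to $A$. Markov's inequality applied to the surplus count and to $\bar\CQ(\bar x, A^{\text{c}}) \le \eps$ yields $\P(G^{\text{c}}) \lesssim K d(x,\bar x) + \eps$; on $G^{\text{c}}$ we trivially bound $1 \wedge \|\cdot\|_1 \le 1$. On $G$, sub-additivity of the Wasserstein-$1$ norm under sums of measures, combined with the elementary inequality $1 \wedge \sum_i a_i \le \sum_i (1 \wedge a_i)$ valid for $a_i \ge 0$, gives
\[
1 \wedge \|\mu_x^{[n]} - \bar\mu_{\bar x}^{[n]}\|_1 \le d(x,\bar x) + \sum_j \bigl(1 \wedge \|\mu_{y_j}^{[n-1]} - \bar\mu_{\bar y_j}^{[n-1]}\|_1\bigr)\;.
\]
Since every $\bar y_j \in A$ on $G$, applying the inductive hypothesis conditionally on the first generation together with $\E \sum_j 1 \le K+1$ and $\E \sum_j d(y_j, \bar y_j) \lesssim K d(x,\bar x) + \eps$ yields $\E[1 \wedge \|\mu_x^{[n]} - \bar\mu_{\bar x}^{[n]}\|_1 ; G] \lesssim d(x,\bar x) + C_{n-1}(K\sqrt\eps + K d(x,\bar x) + \eps)$. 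Combining this with $\P(G^{\text{c}})$ and using $\eps \le \sqrt\eps$ closes the induction, the constant $C_n$ depending only on $K$ and $n$ and growing geometrically in $n$.

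The main obstacle is the Poisson coupling: a Wasserstein-$1$ bound on the intensities controls the optimal transport cost for matched points but does not directly bound the probability that the two Poisson counts disagree, so one must separately control the mass discrepancy via a total-variation estimate on Poisson random variables, based on $\bigl||\CQ(x,\CX)| - |\bar\CQ(\bar x,\CX)|\bigr| \le \|\CQ(x,\cdot) - \bar\CQ(\bar x,\cdot)\|_1$. Once this joint construction is in hand, everything else is routine sub-additivity, recursion, and bookkeeping.
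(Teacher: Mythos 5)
Your proof is correct in outline, but it follows a genuinely different route from the paper's. The paper does not use the branching (per-parent) decomposition at all: it couples the processes \emph{generation by generation}, comparing the aggregated intensities $\pi^{n+1} = \int \CQ(y)\,\mu^n_x(dy)$ and $\bar\pi^{n+1} = \int \bar\CQ(y)\,\bar\mu^n_{\bar x}(dy)$, feeding their Wasserstein distance into Proposition~\ref{prop:distPPP}, and propagating $\E\bigl(1\wedge\|\mu^n_x-\bar\mu^n_{\bar x}\|_1\bigr)$ through the Lipschitz bound in \eref{e:ass1}; the restriction to $A$ and the size of the generations are handled globally, via the bounds $\E\bigl(\bar\mu^n_{\bar x}(A)\,|\,\bar\mu^{[n-1]}_{\bar x}(A^{\mathrm{c}})=0\bigr)\le(2K)^n$ and a Chebyshev argument with threshold $D\sim 1/\sqrt\eps$, which is precisely where the $\sqrt\eps$ in the statement originates. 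You instead induct on the depth using the finite-depth analogue of \eref{e:recursion} (worth stating explicitly, though it follows from the superposition property exactly as in the paper's remark), build an explicit cemetery-augmented optimal-transport coupling of the two first-generation Poisson clouds that controls both the expected matched transport cost and the expected surplus count by $\|\CQ(x)-\bar\CQ(\bar x)\|_1\lesssim Kd(x,\bar x)+\eps$, and localise the $A$-restriction into a per-node good event so that the inductive hypothesis (with $\bar y_j\in A$) applies conditionally to each offspring pair. What your approach buys: it avoids the Chebyshev/threshold step entirely and in fact yields the stronger rate $\eps+d(x,\bar x)$ rather than $\sqrt\eps+d(x,\bar x)$. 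What it costs: you need a slightly finer coupling lemma than the paper's Proposition~\ref{prop:distPPP} (which only bounds $\E(1\wedge\|\mu-\bar\mu\|_1)$, not the matched cost and surplus separately) — your cemetery construction does supply this, since Kantorovich duality on the augmented space reproduces exactly the paper's norm $\|\cdot\|_1$ — and you should note the standard measurable-selection point when invoking the inductive-hypothesis couplings at the random offspring locations $(y_j,\bar y_j)$, a technicality the paper also glosses over in its own coupling constructions.
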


\begin{remark}
One useful feature of the way that we set up the bounds in the statement is that we only require information
about the kernel $\bar \CQ$ on the set $A$. In the application we have in mind, the kernel $\bar \CQ$
will be the one describing $\tilde \mu^{\eps,[n]}_\gamma$, while the set $A$ will consist of trajectories exhibiting
``typical'' behaviour at small scales.
\end{remark}

Before we turn to the proof of this theorem, we show that if $\pi_n \to \pi$ in the Wasserstein-$1$ sense, then the  
(usual) Poisson point processes with these intensity measures also converge to each other weakly in the Wasserstein-$1$ distance:

\begin{proposition}\label{prop:distPPP}
Let $\pi$ and $\bar \pi$ be two finite positive measures on a Polish space $\CX$ endowed with a metric $d \le 1$
and let $\mu$ and $\bar \mu$ be the
corresponding Poisson point processes on $\CX$. 
Then, there exists a constant $C$ and a coupling between $\mu$ and $\bar \mu$ such that
\begin{equ}
\E \bigl(1 \wedge \|\mu - \bar \mu\|_1\bigr)  \le C \bigl(\|\pi - \bar \pi\|_1 \wedge 1\bigr)\;.
\end{equ}
\end{proposition}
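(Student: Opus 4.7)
The plan is to construct the coupling explicitly using classical Kantorovich--Rubinstein duality, realised on a product space by a Poisson point process. First I would extend $\CX$ by a single abstract point $\Delta$ with $d(x,\Delta) = 1$ for all $x\in\CX$, and set $\hat\pi = \pi + \bar\pi(\CX)\delta_\Delta$ and $\hat{\bar\pi} = \bar\pi + \pi(\CX)\delta_\Delta$. These two measures have equal total mass, and an elementary computation (shifting any $1$-Lipschitz, $1$-bounded test function so that it vanishes at $\Delta$) shows that the norm $\|\pi-\bar\pi\|_1$ defined in the statement coincides with the standard Wasserstein-$1$ distance $W_1(\hat\pi,\hat{\bar\pi})$ on $\CX\cup\{\Delta\}$ with the extended metric. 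The Polish-space version of Kantorovich duality then produces a positive measure $\rho$ on $(\CX\cup\{\Delta\})^2$ with marginals $\hat\pi$ and $\hat{\bar\pi}$ and total cost $\int d(x,y)\,\rho(dx,dy) = \|\pi-\bar\pi\|_1$.

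Next I would let $\MM$ be a Poisson point process on $(\CX\cup\{\Delta\})^2$ with intensity $\rho$, and take
\[
M = \{x : (x,y)\in\MM,\; x\in\CX\}, \qquad \bar M = \{y : (x,y)\in\MM,\; y\in\CX\},
\]
which, by the standard restriction and projection properties of Poisson point processes, are Poisson point processes on $\CX$ with intensities $\pi$ and $\bar\pi$ respectively, so that $(M,\bar M)$ is a valid coupling. To compare the two, observe that each pair $(x,y)\in\MM$ either matches an atom of $M$ to one of $\bar M$ at transport cost $d(x,y)$, or (if exactly one of $x,y$ equals $\Delta$) contributes an unmatched atom, which costs at most $1 = d(x,\Delta)$ in the modified Wasserstein-$1$ distance (using that $d\le 1$ on $\CX$). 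This yields the pointwise bound $\|M-\bar M\|_1 \le \sum_{(x,y)\in\MM} d(x,y)$, whose expectation, by Campbell's formula, equals $\int d(x,y)\,\rho(dx,dy) = \|\pi-\bar\pi\|_1$.

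Combining this with the trivial bound $1 \wedge \|M-\bar M\|_1 \le 1$ gives $\E(1 \wedge \|M-\bar M\|_1) \le \|\pi-\bar\pi\|_1 \wedge 1$, so the claim holds with $C=1$. The main subtlety is the identification of the paper's non-standard norm $\|\cdot\|_1$ (which allows unbalanced measures) with a genuine Wasserstein-$1$ distance on the extended space; once this identification is in hand, the product Poisson point process construction is essentially forced by the marked-point-process interpretation of optimal transport, and the remaining estimates are straightforward.
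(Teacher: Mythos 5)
Your argument is correct, and it takes a genuinely different route from the paper. The paper's proof builds the two Poisson point processes by first sampling their total numbers of points $N \sim \CP(\pi(\CX))$ and $\bar N \sim \CP(\bar\pi(\CX))$, coupling these integers through the total variation bound $d_{\TV}(\CP(\lambda),\CP(\bar\lambda)) \le |\lambda - \bar\lambda| \wedge 1$, and then, on the event $N = \bar N$, pairing the locations via an optimal coupling of the normalised measures $\pi/\pi(\CX)$ and $\bar\pi/\bar\pi(\CX)$; the final estimate is assembled using the equivalence \eref{e:compW1} between the unbalanced norm and the mass-difference plus normalised-Wasserstein decomposition. You instead realise both processes at once as the two coordinate projections of a single Poisson point process on the product space whose intensity is an optimal transport plan between $\pi$ and $\bar\pi$ after completion by a cemetery point $\Delta$ at distance $1$; your identification of $\|\pi - \bar\pi\|_1$ with $W_1(\hat\pi,\hat{\bar\pi})$ on $\CX \cup \{\Delta\}$ is exactly right (the constraint $\|f\|_\infty \le 1$ is precisely what allows the extension $f(\Delta) = 0$ to remain $1$-Lipschitz, and conversely one subtracts $g(\Delta)$ using equal masses), the mapping/restriction theorems justify that the projections are Poisson with intensities $\pi$ and $\bar\pi$, and Campbell's formula gives $\E\|\mu - \bar\mu\|_1 \le \|\pi - \bar\pi\|_1$ outright. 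Your approach buys a cleaner one-shot coupling with the explicit constant $C = 1$ and in fact the stronger untruncated bound, bypassing both the Poisson total-variation estimate and \eref{e:compW1}; the paper's approach is more hands-on and avoids invoking Kantorovich duality and the existence of an optimal plan on the extended space (which does hold here, since the extended metric is a bounded continuous cost on a Polish space, so this is not a gap), at the price of a truncation that is genuinely needed on the event $N \neq \bar N$.
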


\begin{proof}
The proof relies on the fact that, if $\CP(\lambda)$ denotes the Poisson distribution with parameter $\lambda$, one has the total variation
bound
\begin{equ}[e:boundTVP]
d_{\TV} \bigl(\CP(\lambda), \CP(\bar \lambda)\bigr) \le |\lambda - \bar \lambda|\wedge 1\;,
\end{equ}
see for example \cite{MR2221228}.

We can construct $\mu$ (and similarly for $\bar \mu$) in the following way. First, draw a Poisson random variable $N$ with
parameter $\pi(\CX)$. Then, draw $N$ independent random variables $\{X_1,\ldots,X_N\}$ with law $\pi / \pi(\CX)$ and
set
\begin{equ}[e:defMu]
\mu = \sum_{k=1}^N \delta_{X_k}\;.
\end{equ}
By \eref{e:boundTVP}, we can now construct a Poisson random variable $\bar N$ with parameter $\bar \pi(\CX)$ such that
\begin{equ}[e:diffN]
\P(\bar N \neq N) \le |\pi(\CX) - \bar \pi(\CX)|\;.
\end{equ}
Assuming that $\bar N = N$, we can then draw random variables $\{\bar X_1,\ldots,\bar X_N\}$ in such a way that the pairs
$(\bar X_k, X_k)$ are distributed according to a coupling between $\pi / \pi(\CX)$ and $\bar \pi / \bar\pi(\CX)$ that minimises 
their expected distance. If $\bar N \neq N$, then we simply draw $\{\bar X_1,\ldots,\bar X_N\}$ according to $\bar \pi / \bar\pi(\CX)$,
independently of the $X_k$.

If we then define $\bar \mu$ similarly to \eref{e:defMu}, it follows that
\begin{equ}
\|\mu - \bar \mu\|_1 \le 
\left\{\begin{array}{cl}
	N \bigl\| {\pi \over \pi(\CX)} - {\bar \pi \over \bar \pi(\CX)}\bigr\|_1\ & \text{if $N = \bar N$,} \\[0.4em]
	N + \bar N & \text{otherwise.}
\end{array}\right.
\end{equ}
As a consequence, we obtain the bound
\begin{equ}
\E \bigl(1 \wedge \|\mu - \bar \mu\|_1\bigr) \le \P(N \neq \bar N) + \Bigl\| {\pi \over \pi(\CX)} - {\bar \pi \over \bar \pi(\CX)}\Bigr\|_1 \E N\;,
\end{equ}
so that the claim follows from \eref{e:diffN}, the definition of $N$, and \eref{e:compW1}.
\end{proof}

\begin{proof}[of Theorem~\ref{theo:convPPP}]
Note first that, by combining the first bound in \eref{e:ass1} with the second bound in  \eref{e:ass2}, we obtain the bound
\begin{equ}
\sup_{x \in A} \bar \CQ(x,\CX) \le K+\eps\;.
\end{equ}
It follows that we have the recursive bound
\begin{equation*}
\E \bigl(\bar \mu^n_{\bar x}(A) \,|\, \bar \mu_{\bar x}^{[n-1]}(A^{\text{c}}) = 0\bigr)
\le 2K \E \bigl(\bar \mu^{n-1}_{\bar x}(A) \,|\, \bar \mu_{\bar x}^{[n-2]}(A^{\text{c}}) = 0\bigr)\;,
\end{equation*}
so that, since $\bar \mu^0_{\bar x}(A) = 1$ by assumption,
\begin{equation}\label{e:bound1}
\E \bigl(\bar \mu^n_{\bar x}(A) \,|\, \bar \mu_{\bar x}^{[n-1]}(A^{\text{c}}) = 0\bigr) \le (2K)^n\;.
\end{equation}
On the other hand, defining the  positive measures $\pi^n$ and $\bar \pi^n$ by
\[
\pi^{n+1} = \int_{\CX} \CQ(y) \mu^n_{x} (dy)\qquad \text{and}\qquad \bar \pi^{n+1} = \int_{\CX} \bar\CQ(y) \bar \mu^n_{\bar x} (dy)\]
we have the inequality
\begin{align*}
\P(\bar \mu_{\bar x}^{[n]}(A^{\text{\tiny c}}) > 0) &\le \P(\bar \mu_{\bar x}^{[n-1]}(A^{\text{c}}) > 0) + \P(\bar \mu^n_{\bar x}(A^{\text{c}}) > 0\,|\,  \bar \mu_{\bar x}^{[n-1]}(A^{\text{c}}) = 0)\\
&\le \P(\bar \mu_{\bar x}^{[n-1]}(A^{\text{c}}) > 0) + \E\bigl( \bar \pi^n(A^{\text{c}}) \,|\,  \bar \mu_{\bar x}^{[n-1]}(A^{\text{c}}) = 0\bigr)\\
&\le \P(\bar \mu_{\bar x}^{[n-1]}(A^{\text{c}}) > 0) + \eps D + \P \bigl(\bar \mu^{n-1}_{\bar x}(A) > D \,|\,  \bar \mu_{\bar x}^{[n-1]}(A^{\text{c}}) = 0\bigr)\\
&\le \P(\bar \mu_{\bar x}^{[n-1]}(A^{\text{c}}) > 0) + \eps D + {1\over D}\E \bigl(\bar \mu^{n-1}_{\bar x}(A) \,|\,  \bar \mu_{\bar x}^{[n-1]}(A^{\text{c}}) = 0\bigr)\\
&\le \P(\bar \mu_{\bar x}^{[n-1]}(A^{\text{c}}) > 0) + \eps D + {(2K)^{n-1}\over D}\;,
\end{align*}
which is valid uniformly over all $D> 0$. Choosing $D \sim 1/\sqrt\eps $, we thus obtain the recursion relation 
\begin{equation*}
\P(\bar \mu_{\bar x}^{[n]}(A^{\text{c}}) > 0) \le  \P(\bar \mu_{\bar x}^{[n-1]}(A^{\text{c}}) > 0) + C\sqrt \eps\;,
\end{equation*}
from which it follows that
\begin{equation}\label{e:bound2}
\P(\bar \mu_{\bar x}^{[n]}(A^{\text{c}}) > 0) \le C\sqrt \eps\;,
\end{equation}
where in both cases the constant $C$ depends on $K$ and $n$, but not on $\eps$.

Note now that we have the bound
\begin{align*}
\|\pi^{n+1} - \bar \pi^{n+1}\|_1 &\le \Bigl\|\int_\CX \CQ(y)\, \bigl(\mu^n_x - \bar \mu^n_{\bar x}\bigr)(dy)\Bigr\|_1
+ \int_\CX \|\CQ(y) - \bar \CQ(y)\|_1 \bar \mu^n_{\bar x}(dy) \\
&\le \|\mu^n_x - \bar \mu^n_{\bar x}\|_1 \bigl(\Lip( \CQ) + \| \CQ\|_\infty\bigr) + 
\eps \bar \mu^n_{\bar x}(A)  \\
&\quad + \int_{A^{\text{c}}} \|\CQ(y) - \bar \CQ(y)\|_1 \bar \mu^n_{\bar x}(dx) \;,
\end{align*}
so that the bound 
\begin{equation*}
1 \wedge \|\pi^{n+1} - \bar \pi^{n+1}\|_1 \le 2K \bigl(\|\mu^n_x - \bar \mu^n_{\bar x}\|_1 \wedge 1\bigr) + 
\eps D + \one_{\bar\mu^n_{\bar x}(A) > D}  + \one_{\bar\mu^n_{\bar x}(A^{\text{c}}) > 0}\;,
\end{equation*}
is valid for every $D>0$. Furthermore, by Chebyshev's inequality and \eref{e:bound1}--\eref{e:bound2}, one has
\begin{align*}
\P(\bar \mu^n_{\bar x}(A) > D) &\le \P(\bar \mu^n_{\bar x}(A) > D \,|\,  \bar\mu_{\bar x}^{n-1}(A^{\text{c}}) = 0\bigr) + \P \bigl(\bar\mu_{\bar x}^{n-1}(A^{\text{c}}) > 0\bigr)\\
&\le {K^n\over D} + C\sqrt \eps\;.
\end{align*}
Choosing again $D \sim 1/\sqrt \eps$, we thus obtain the bound
\begin{equ}
\E \bigl(1 \wedge \|\pi^{n+1} - \bar \pi^{n+1}\|_1\bigr) \le 2K \E\bigl(\|\mu^n_x - \bar \mu^n_{\bar x}\|_1 \wedge 1\bigr) + C \sqrt \eps\;.
\end{equ}

Applying Proposition~\ref{prop:distPPP}, we conclude that, given $\mu^n_x$ and $\bar \mu^n_{\bar x}$, it is 
possible to construct a coupling between $\mu^{n+1}_x$ and $\bar \mu^{n+1}_{\bar x}$ such that 
\begin{equ}
\E \bigl(1 \wedge \|\mu^{n+1}_x - \bar\mu^{n+1}_{\bar x}\|_1\bigr) \lesssim \E \bigl(1 \wedge \|\mu^{n}_x - \bar\mu^{n}_{\bar x}\|_1\bigr) +  \sqrt \eps\;,
\end{equ}
from whence the claim now follows at once.
\end{proof}

\subsection{Convergence of the truncated distributions}
\label{sec:convFD}

We are now in a position to provide the proof of \eref{e:convFD}. Again, throughout this section,
we make the standing assumption that the one-step distribution $\nu$ for the random walk \eref{e:RW}
has some exponential moment. We also use the notations $\tilde \mu^{\eps,[n]}_\gamma$ and 
$\mu^{[n]}_\gamma$ from \eref{e:convFD}.
Let  $\CE$ be the space of real-valued excursions as before.
We then introduce the map $\J^\eps \colon \CE \to \MM_+(\R^2)$ given by
\begin{equ}[e:defJ]
\J^\eps(w)(dz,dt) = a \eps\sum_{\eps k \in [0,1]} e^{a \sqrt \eps z} \, \delta_{\eps k}(dt)\, \one_{[0,\Dpe_k w]}(z)\,dz\;,
\end{equ}
where $\delta_z$ denotes the Dirac measure located at $z$, $\Dpe w_k$ is defined by
\begin{equ}
\Dpe_k w = {w_{(k+1)\eps}-w_{k\eps}\over \sqrt \eps}\;,
\end{equ}
and we used the convention that $\one_{[0,z]} = 0$ if $z < 0$.

As before, denote by $\Q_{z,\gamma}^{\eps}$ the law of the random walk \eref{e:RW}, starting at $z\sqrt \eps$, 
and conditioned to hit level $\gamma$ before becoming negative. We stop it as soon as it hits $\R_-$, so that we interpret
$\Q_{z,\gamma}^{\eps}$ as a measure on $\CE_0$. 
Recall also that $\bar P_{z,\gamma/\sqrt \eps}$, with $\bar P_{z,\gamma}$ defined as
in Section~\ref{sec:BMformal} denotes the probability that this event actually happens. 

With this notation, the measure $\CQ^\eps_\gamma$ describing $\tilde \mu^{\eps,[n]}_\gamma$ 
(i.e.\ $\CQ^\eps_\gamma(w,\cdot)$ is the intensity measure of $\QQtd(w)$) is given by
\begin{equs}
\CQ^\eps_\gamma(w, \cdot) &= \int_{\R^2} \Theta_{w,t}^\star \Q^\eps_{z,\gamma}\, \bar P_{z,{\gamma \over \sqrt \eps}} \, {\J^\eps(w)(dz,dt) \over \sqrt \eps}\\
&= \int_{\R^2} \Bigl({1\over \gamma} \Theta_{w,t}^\star \Q^\eps_{z,\gamma}\Bigr)\,\Bigl({\gamma \over \sqrt \eps} \bar P_{z,{\gamma \over \sqrt \eps}}\Bigr) \, \J^\eps(w)(dz,dt)\;.
\end{equs}
Note now that if $w$ is a typical realisation of  $\Q^\eps_{z,\gamma}$,  then $\J^\eps(w)$ is expected to be close to the measure
\begin{equ}
\J(w) = \one_{\l(w)\cap [0,1]}(t)\,dt \otimes \hat \nu(dz) \;,
\end{equ}
where $\hat \nu$ is the measure on $\R_+$ given by
\begin{equ}
\int G(z)\hat \nu(dz) = \int_0^\infty \int_0^z G(y)\,dy\,\nu(dz)\;,
\end{equ}
for any test function  $G$.
This is because $e^{a \sqrt \eps \Dpe_k w} \sim 1$ and the law of $\Dpe_k w$ would be given
by $\nu$, were it not for the conditioning. 

On the other hand, the kernel $\CQ_\gamma$ describing the truncated Brownian fan 
$\mu^{[n]}_\gamma$ with parameter $a$ is given by
\begin{equ}[e:constrQ]
\CQ_\gamma(w, \cdot) = {a\over 2\delta}\int_{\l(w)\cap [0,1]} \Theta_{w,t}^\star \Q_\gamma\, dt
= {1\over \delta}\int_{\R^2} \Theta_{w,t}^\star \Q_\gamma\,G(z)\, \J(w)(dz,dt)\;,
\end{equ}
where $\Q_\gamma$ is the law of a Brownian excursion conditioned to reach level $\gamma$ and $G$ was defined in \eqref{e:defG}. This is the case because of the well-known 
fact that ${1\over \gamma}\Q_\gamma$ is the law of the unnormalised Brownian excursion \textit{restricted} to the set of excursions 
reaching level $\gamma$. The second identity in \eref{e:constrQ} is a consequence of the definition of $\hat \nu$, combined with Proposition~\ref{prop:expG}.
With these notations at hand, we have:

\begin{proposition}\label{prop:convFD}
Consider the setting and assumptions of Theorem~\ref{theo:finalConv}.
For every $\delta < {1\over 32}$, there exists a constant $C$ depending on $\gamma$, $n$ and $\delta$ such that
\begin{equ}
\E \bigl(1 \wedge \|\tilde \mu^{\eps,[n]}_\gamma-\mu^{[n]}_\gamma\|_1\bigr) \le  C \eps^\delta\;, 
\end{equ}
uniformly over $\eps \le \eps_0$ for some $\eps_0$ small enough.
\end{proposition}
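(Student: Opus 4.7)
The plan is to apply Theorem~\ref{theo:convPPP} directly to the pair $(\CQ_\gamma, \CQ^\eps_\gamma)$, viewed as maps from $\CE$ to $\MM_+(\CE)$, with $\CE$ endowed with a truncation of the metric \eref{e:distCE} so that its diameter is at most one. The two initial conditions --- a Brownian motion and a rescaled random walk, each starting at $x$ and conditioned to reach level $\gamma$ before hitting $-v/a$ --- can be coupled with $\E\, d(x,\bar x)$ polynomially small in $\eps$ by the argument already used in the proof of Theorem~\ref{theo:convLaws}. It therefore suffices to exhibit a ``good set'' $A \subset \CE$, a constant $K = K(\gamma)$, and an effective error $\eps' = C\eps^\delta$ such that the four conditions \eref{e:ass1}--\eref{e:ass2} of Theorem~\ref{theo:convPPP} hold.

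Verifying \eref{e:ass1} is routine: the total mass bound $\sup_w \CQ_\gamma(w, \CE) \le a/(2\gamma)$ is immediate from the second line of \eref{e:constrQ} since $\Q_\gamma$ is a probability measure and $|\l(w) \cap [0,1]| \le 1$, and the Lipschitz bound $\|\CQ_\gamma(v) - \CQ_\gamma(w)\|_1 \lesssim d(v,w)$ follows by comparing the shifted excursion measures $\Theta_{v,t}^\star \Q_\gamma$ and $\Theta_{w,t}^\star \Q_\gamma$ and integrating over $t$. For the good set I would take
\[
A = \bigl\{w \in \CE \,:\, \sup_k |\Dpe_k w| \le \eps^{-\alpha_1},\; \|\J^\eps(w) - \J(w)\|_1 \le \eps^{\alpha_2},\; w_{\s(w)} \in [0, \eps^{1/16}]\bigr\}\;,
\]
for small parameters $\alpha_1,\alpha_2 > 0$ to be tuned. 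The first condition controls the factor $e^{a\sqrt\eps z}$ appearing in \eref{e:defJ}; the second is the substantive empirical-to-expected measure closeness; the third matches the domain of validity of Theorem~\ref{theo:convLaws}.

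The second half of \eref{e:ass2}, namely $\|\CQ_\gamma(w) - \CQ^\eps_\gamma(w)\|_1 \le \eps'$ for $w \in A$, then follows by a term-by-term comparison of the integrands in
\[
\CQ^\eps_\gamma(w, \cdot) = \int_{\R^2} \bigl(\gamma^{-1} \Theta_{w,t}^\star \Q^\eps_{z,\gamma}\bigr) \bigl(\gamma \eps^{-1/2} \bar P_{z,\gamma/\sqrt\eps}\bigr)\, \J^\eps(w)(dz,dt)
\]
and the second line of \eref{e:constrQ}: Theorem~\ref{theo:convLaws} provides $\gamma^{-1}\Q^\eps_{z,\gamma} \to \gamma^{-1}\Q_\gamma$ uniformly for $z \le \eps^{-1/3}$ (covering the support of $\J^\eps(w)$ on $A$ thanks to the first condition), Proposition~\ref{prop:defG} provides $(\gamma/\sqrt\eps) \bar P_{z,\gamma/\sqrt\eps} \to G(z)$ quantitatively, and $\|\J^\eps(w) - \J(w)\|_1 \le \eps^{\alpha_2}$ holds by the second condition. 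Proposition~\ref{prop:expG} then identifies the limiting expression with $\CQ_\gamma(w, \cdot)$.

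The main obstacle is the containment condition $\sup_{w \in A} \CQ^\eps_\gamma(w, A^{\text{c}}) \le \eps'$: a child excursion spawned by $\CQ^\eps_\gamma(w, \cdot)$ must itself lie in $A$ with overwhelming probability. Each such child is a rescaled random walk started in $[0, \eps^{1/16}]$ and conditioned to reach $\gamma$ before returning to $\R_-$, so the third condition holds automatically, and the first follows by a union bound from the exponential moment assumption on $\nu$. The delicate point is the empirical-measure condition: conditioning on reaching level $\gamma$ is a non-local event, so one cannot apply standard concentration for i.i.d.\ sums directly. The remedy is to use the strong coupling with the idealised excursion $w^\gamma$ provided by the proof of Theorem~\ref{theo:convLaws}, together with the Bessel-$3$/Brownian decomposition \eref{e:decompM}: on the coupling event, the increments of the conditioned random walk coincide with those of an unconditional rescaled random walk outside a short window near the first hitting time of $\gamma$, after which standard exponential concentration yields the polynomial-in-$\eps$ deviation bound required. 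Inserting the resulting constants into Theorem~\ref{theo:convPPP} then completes the proof.
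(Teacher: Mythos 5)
Your high-level plan --- apply Theorem~\ref{theo:convPPP} to the pair $(\CQ_\gamma,\CQ^\eps_\gamma)$ with a good set encoding bounded rescaled increments and closeness of $\J^\eps(w)$ to its limit, and verify the kernel comparison via Theorem~\ref{theo:convLaws}, Proposition~\ref{prop:defG} and Proposition~\ref{prop:expG} --- is exactly the paper's, and that part is sound. The gap is in the containment estimate $\sup_{w\in A}\CQ^\eps_\gamma(w,A^{\mathrm{c}})\le\eps'$, precisely the point you flag as delicate. Your remedy does not work as stated: the coupling produced in the proof of Theorem~\ref{theo:convLaws} is a Wasserstein-type coupling of the conditioned walk with the Brownian path $w^\gamma$ in the supremum distance; it does not identify the \emph{increments} of the conditioned walk with those of an unconditioned walk. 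Moreover, before its first passage to level $\gamma$ the conditioned walk is a Doob $h$-transform (its increments are tilted by the gambler's-ruin harmonic function), and this pre-hitting segment typically lasts a time of order $\gamma^2$, i.e.\ order one --- it is not ``a short window near the first hitting time''; only after that hitting time does the strong Markov property return genuine i.i.d.\ $\nu$-increments. In addition, each child excursion is stopped at the random time it returns to $\R_-$, and the event that the \emph{stopped} path has a good empirical measure is not directly an event about the unstopped walk; your argument does not address this.

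The paper closes this gap without ever proving concentration for the conditioned walk. Since by Proposition~\ref{prop:defG} the conditioning event has probability bounded below by $C\sqrt\eps$, uniformly over starting points $z\le\eps^{-1/3}$, it suffices to bound the \emph{unconditioned} probability of a bad set by $o(\sqrt\eps)$, in fact by $C\eps^p$ for every $p$. To cope with the random stopping, one exhibits a set $\Xi_\eps$, defined by coarse-graining $[0,1]$ into intervals $I_k$ of length $\eps^{1/3}$ and demanding $\bigl|\Pi_2^\star\tJe(w)(I_k)-\frac{a}{2}|I_k|\bigr|\le\sqrt\eps$ on each, which is stable under stopping at an arbitrary time (only the single interval containing the stopping time can go bad, contributing $\CO(\eps^{1/3})$), and whose complement has unconditioned probability $\CO(\eps^p)$ by i.i.d.\ moment bounds. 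You would need to replace your coupling argument by something of this kind, or else genuinely prove an empirical-measure concentration for the $h$-transformed, randomly stopped walk. A secondary problem: your third condition $w_{\s(w)}\in[0,\eps^{1/16}]$ excludes the initial excursion, which starts at macroscopic height above its killing level, so the hypothesis $\bar x\in A$ of Theorem~\ref{theo:convPPP} would fail; the starting heights of children are in any case controlled through the increment bound on the parent, so this condition should be dropped.
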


\begin{proof}
We apply Theorem~\ref{theo:convPPP} with $\CQ = \CQ_\gamma$, $\bar \CQ = \CQ^\eps_\gamma$, 
and $A$ to be determined. Using the results obtained earlier in this section, it turns out that
the assumptions are then relatively straightforward to check. 

For convenience, we introduce the notation
\begin{equ}
\tJe(w)(dz,dt) =  G(x) \J^\eps(w)(dz,dt)\;,
\end{equ}
and similarly for $\tJ$. We also denote by $\Pi_2 \colon \R^2 \to \R$ the projection onto the second component,
so that $\Pi_2^\star\tJe(w)$ (and similarly for $\tJ$) is the projection of $\tJe(w)$ onto the $t$-component.
We also denote by $\Omega_\eps \subset \CE$ the subset of excursions given by
\begin{equ}
\Omega_\eps = \Omega_\eps^{(1)} \cap \Omega_\eps^{(2)} \;,
\end{equ}
where we set
\begin{equs}
\Omega_\eps^{(1)} &= \bigl\{w\in \CE \,:\, \tJe(w)\bigl(\{x > \eps^{-1/3}\}\bigr) = 0\bigr\}\;,\\
\Omega_\eps^{(2)} &= \bigl\{w\in \CE \,:\, \|\Pi_2^\star\tJ(w) - \Pi_2^\star\tJe(w)\|_1 \le \eps^{1/10} \bigr\}\;.
\end{equs}
Also, in the definition of $\Omega_\eps^{(2)}$, the Wasserstein-$1$ distance is taken with respect to the somewhat unusual distance on $\R$ given by 
\begin{equ}[e:funnyD]
d(t, t') = 1 \wedge |t-t'|^{1/3}\;.
\end{equ}
The reason for this seemingly strange choice will become clear later. The set $\Omega_\eps$ defined in this way
will play the role of $A$ when applying Theorem~\ref{theo:convPPP}.

Note first that $\CQ_\gamma(w,\CX) \le {a\over 2\gamma}$, which is bounded independently of $w$.
Furthermore, one has the bound
\begin{equs}
\bigl\|\CQ_\gamma(w,\CX) - \CQ_\gamma(\bar w,\CX)\bigr\|_1 &\le  {a\over 2\gamma} \int_{\l(w)\cap \l(\bar w)\cap [0,1]} \|\Theta_{w,t}^\star \Q_\gamma - \Theta_{\bar w,t}^\star \Q_\gamma\|_1\, dt \\ &\quad + {a\over 2\gamma} \bigl(|\s(w) - \s(\bar w)| + |\e(w) - \e(\bar w)|\bigr)\;.
\end{equs}
Since furthermore $\|\Theta_{w,t}^\star \Q_\gamma - \Theta_{\bar w,t}^\star \Q_\gamma\|_1 \le |w_t - \bar w_t|$, it does indeed
follow that $\CQ$ is globally Lipschitz continuous as required, so that \eref{e:ass1} holds with some constant
$K$ depending on $\gamma$.

It remains to check that \eref{e:ass2} holds, with $\eps$ replaced by some power of $\eps$.
By Theorem~\ref{theo:convLaws}, we already know that 
there exists a constant $C$, possibly depending on $\gamma$, such that 
\begin{equ}
\|\Q^\eps_{z,\gamma}- \Q_\gamma\|_1 \le C \eps^{\delta}\;,
\end{equ}
for any exponent $\delta \in (0,{1\over 16})$, and uniformly over all $z \le \eps^{-1/3}$.
As a consequence, for every $w \in \Omega_\eps$, we have the bound
\begin{equs}
\bigl\|\CQ_\gamma(w,\CX) -  \CQ^\eps_\gamma(w,\CX)\bigr\|_1 &\le 
{1\over \gamma} \int_{\R^2} \Bigl({\gamma\over \sqrt \eps} \bar P_{z,{\gamma \over \sqrt \eps}} - G(z)\Bigr) \, \J^\eps(w)(dz,dt) \\
&\quad + {1\over \gamma}\int_{\R^2} \|\Theta_{w,t}^\star \Q^\eps_{z,\gamma}-\Theta_{w,t}^\star \Q_\gamma\|\,\tJe(w)(dz,dt)\\
&\quad + {1\over \gamma}\Bigl\|\int_{\R^2} \Theta_{w,t}^\star \Q_\gamma \bigl(\Pi_2^\star \tJe(w)(dt)-\Pi_2^\star\tJ(w)(dt)\bigr)\Bigr\|_1\\
&\le C_\gamma \eps^{1/5} \J^\eps(w)(\R^2) + C_\gamma \eps^{\delta} \tJe(w)(\R^2) \\
&\quad + C_\gamma \eps^{1/10} \bigl(1 + \Lip\, \Theta_{w,t}^\star \Q_\gamma \bigr)\;,
\end{equs}
where $\Lip\, \Theta_{w,t}^\star \Q_\gamma$ denotes the quantity
\begin{equ}
\Lip\, \Theta_{w,t}^\star \Q_\gamma = \sup_{t\neq t'} {\|\Theta_{w,t}^\star \Q_\gamma-\Theta_{w,t}^\star \Q_\gamma\|_1\over 1 \wedge |t-t'|^{1/3}}\;.
\end{equ} 
It is at this stage that the choice \eref{e:funnyD} of distance function becomes clear: with respect to the
usual Euclidean distance, the map $t \mapsto \Theta_{w,t}^\star \Q_\gamma$ would not be Lipschitz
continuous. In this way however, it follows immediately from the H\"older continuity of Brownian motion
that $\Lip\, \Theta_{w,t}^\star \Q_\gamma < \infty$.

Furthermore, it follows from the definition of $\Omega_\eps^{(2)}$ that $\tJe(w)(\R^2)$ is bounded
 by a constant independent of $w$ and $\eps$. Since $G(x)$ is bounded from below by a constant,
 this  implies that the same is true of  $\J^\eps(w)(\R^2)$. Combining these bounds, we then obtain
 \begin{equ}
 \bigl\|\CQ_\gamma(w,\CX) -  \CQ^\eps_\gamma(w,\CX)\bigr\|_1 \le C_\gamma \eps^\delta\;,
 \end{equ}
for some constant $C$ and any $\delta < {1\over 16}$.

In order to complete the proof, it thus remains to show that 
$\inf_{w \in \Omega_\eps} \CQ^\eps_\gamma(w, \Omega^{\text{c}}_\eps) \to 0$ sufficiently fast as $\eps \to 0$, where 
$\Omega^{\text{c}}_\eps$ denotes the complement of $\Omega_\eps$.
As a consequence of the definition of $\Omega_\eps$, this follows if we can show that 
$\Q^\eps_{z,\gamma}(\Omega^{\text{c}}_\eps)\to 0$ as $\eps \to 0$, uniformly over all $z \le \eps^{-1/3}$. 
Instead of considering $\Q^\eps_{z,\gamma}$, it is much easier to consider $\Q_z^{\eps}$, the law of
the rescaled random walk \eref{e:RW} over the time interval $[0,1]$.

Observe that $\Q^\eps_{z,\gamma}$ is obtained from $\Q_z^{\eps}$ by first conditioning on the event that
$\gamma$ is reached before the walk becomes negative and then stopping the walk. By Proposition~\ref{prop:defG}, 
the probability of this event is bounded from below by $C\sqrt \eps$ for some constant $C$ depending on $\gamma$ but independent
of $z \in [0,\eps^{-1/3}]$.  Therefore it is sufficient to find a set $\Xi_\eps$ with the following two properties:
\begin{claim}
\item There exists an exponent $\zeta > {1\over 2}$ and a constant $C$ such that $\Q_z^{\eps}(\CX\setminus\Xi_\eps) \le C \eps^\zeta$
for every $\alpha \le 1$ and every $z \in [0,\eps^{-1/3}]$.
\item For every $w \in \Xi_\eps$ and every $t \in [0,1]$, the path $\tilde w$ obtained from $w$ by stopping it at time $t$
belongs to $\Omega_\eps$.
\end{claim}
In order to determine whether a path $w$ belongs to $\Xi_\eps$, we ``coarse-grain'' it into pieces of length 
$\eps^{1/3}$ (the precise exponent is not very important as we do not endeavour to obtain optimal convergence
rates)
and we impose that the contribution of $\tJe(w)$ on each piece is very close to what it should be.
In other words, setting $I_k \subset [0,1]$ by $I_k = [\eps^{1/3} k,\eps^{1/3}(k+1))$, we define $\Xi_\eps$ as
\begin{equ}
\Xi_\eps = \Omega_\eps^{(1)} \cap \Bigl\{w\,:\, \Bigl|\Pi_2^\star \tJe(w)(I_k) - {a\over 2} |I_k|\,\Bigr| \le \sqrt\eps\Bigr\}\;,
\end{equ}
where $|I_k|$ denotes the length of the interval $I_k$. 

We first note that if $w \in \Xi_\eps$ and $t \in [0,1]$, then the path $\tilde w$ obtained by stopping $w$ at
time $t$ does belong to $\Omega_\eps$. Since $\tJe(\tilde w) \le \tJe(w)$, $w\in \Omega_\eps^{(1)}$ implies
that $\tilde w \in \Omega_\eps^{(1)}$. Denote now by $\eta^\eps$ the measure $\eta^\eps = \Pi_2^\star\tJe(\tilde w)$ and by $\eta$ the target measure, namely $\eta = {a\over 2}\lambda|_{[0,t]}$, where $\lambda$ denotes
the Lebesgue measure.
By the assumption on $\tJe(w)$, it follows that one has 
$|\eta^\eps(I_k) - \eta(I_k)| \le \sqrt\eps$ for each of the intervals $I_k$, 
except possibly for the interval containing $t$. As a consequence, denoting by $\eta^\eps_k$ and
$\eta_{k}$ the restrictions of $\eta^\eps$ and $\eta$ to $I_k$, one obtains for all $k$ such that $t \not \in I_k$
the bound
\begin{equ}
\|\eta^\eps_{k} - \eta_{k}\|_1 \le \eps^{{1\over 9} + {1\over3}} + \eps^{1\over 2}\;.
\end{equ}
(Recall that we use the distance function \eref{e:funnyD}, this is the reason for the exponent $1\over 9$.)
Summing over $k$ and using the fact that $\|\eta^\eps_{k} - \eta_{k}\|_1 \le C \eps^{1/3}$ for the value
$k$ such that
$t \in I_k$, it follows that one has $\|\eta^\eps - \eta\|_1 \le C \eps^{1/9}$, which indeed implies that
$\tilde w \in \Omega_\eps$, at least for $\eps$ small enough.

It remains to show that $\Q_z^{\eps}(\CE\setminus\Xi_\eps) \le C \eps^\zeta$ for sufficiently
large $\zeta$. Observe first that $\Q_z^{\eps}(\CE\setminus \Omega_\eps^{(1)}) \le C \eps^p$ for
every $p>0$ thanks to the fact that the distribution $\nu$ of the steps of our random walk has exponential tails.

To obtain the required bound on $\Q_z^{\eps}(\CE\setminus\Xi_\eps)$, we use the fact 
that $\Pi_2^\star \tJe(w)(I_k)$ consists of the sum of $\eps^{-2/3}$
i.i.d.\ copies of a random variable $Y$ with law given by
\begin{equ}
Y \eqlaw a \eps  \int_0^Z e^{a\sqrt \eps z} G(z)\,dz\;, \qquad \CD(Z) = \nu\;,
\end{equ}
where $\nu$ is the one-step distribution of the underlying random walk.
Note now that, as a consequence of Proposition~\ref{prop:expG} and the fact that  $\nu$ admits some 
exponential moment, one has
\begin{equ}
\E Y = {a\over 2}\eps + \CO(\eps^{3/2})\;,
\end{equ}
for all $\eps$ sufficiently small. Similarly, it is straightforward to check that $\E |Y|^p = \CO(\eps^p)$
for every $p > 0$. As a consequence, one has for every $p>0$ the bound
\begin{equ}
\E \Bigl|\Pi_2^\star \tJe(w)(I_k) - {a\over 2} |I_k|\,\Bigr|^p \le  C \eps^{2p/3}\;,
\end{equ}
for a constant possibly depending on $p$. It immediately follows that
\begin{equ}
\P \Bigl(\Bigl|\Pi_2^\star \tJe(w)(I_k) - {a\over 2} |I_k|\,\Bigr| > \sqrt\eps\Bigr) \le C \eps^p\;,
\end{equ}
for every $p > 0$. Summing over $k$ and combining this with the previously obtained bound
on $\Q_z^{\eps}(\CE\setminus \Omega_\eps^{(1)})$, we conclude that  
$\Q_z^{\eps}(\CE\setminus\Xi_\eps) \le C \eps^p$ for every $p \ge 0$, which concludes our proof.
\end{proof}

\bibliographystyle{./Martin}
\markboth{\sc \refname}{\sc \refname}
\bibliography{./refs}

\end{document}